\documentclass [11pt,a4paper]{article}
\usepackage{amssymb, amsmath,latexsym,amsfonts,amsbsy, amsthm}
\usepackage[ansinew]{inputenc}

\usepackage{graphicx}
\usepackage{cite}
\usepackage{color}
\usepackage{mathrsfs}
\usepackage{hyperref}

\textwidth=16cm \textheight=24cm \topmargin=-1cm \oddsidemargin=0cm
\flushbottom

\newtheorem{theorem}{\textbf{Theorem}}[section]
\newtheorem{lemma}{\textbf{Lemma}}[section]
\newtheorem{proposition}{\textbf{Proposition}}[section]

\newtheorem{remark}{\textbf{Remark}}[section]
\newtheorem{definition}{\textbf{Definition}}[section]
\numberwithin{equation}{section}

\allowdisplaybreaks[4]

\def\non{\nonumber }

\newcommand{\NN}{\mathbb N}

\newcommand\p{{\partial}}
\newcommand\eps{{\varepsilon}}
\newcommand{\RR}{\mathbb{R}}
\newcommand{\QQ}{\mathcal{Q}_{phy}}

\newcommand{\E}{\mathcal E}
\newcommand{\G}{\mathcal G}

\newcommand{\Id}{\mathbb I}

\newcommand{\lm}{\displaystyle\liminf}
\newcommand{\TT}{\mathbb{T}}

\newcommand{\ud}{\mathrm{d}}

\def\({\left (}
\def\){\right )}
\newcommand{\tr}{\operatorname{tr}}
\newcommand{\IO}{\mathrm{I}_0}

\begin{document}

\date{}
\pagestyle{plain}
\title{Regularity of a gradient flow generated by the anisotropic Landau-de Gennes energy with a singular potential}


\author{
  Yuning Liu\footnote{NYU Shanghai, 1555 Century Avenue, Shanghai 200122, China,
and NYU-ECNU Institute of Mathematical Sciences at NYU Shanghai, 3663 Zhongshan Road North, Shanghai, 200062, China
  \texttt{yl67@nyu.edu}}\,
  \and
  Xin Yang Lu\footnote{Department of Mathematical Sciences, Lakehead University,
  	955 Oliver Rd.,
  	Thunder Bay, ON P7B 5E1 Canada.   \texttt{xlu8@lakeheadu.ca}}
  	\footnote{
  	Department of Mathematics and Statistics, McGill University, 805 Sherbrooke St. W.,
  	Montreal, QC H3A0B9, Canada.   \texttt{xinyang.lu@mcgill.ca}
} \,
  \and
  Xiang Xu\footnote{
  Department of Mathematics and Statistics,
  Old Dominion University, Norfolk, VA 23529, USA.
  \texttt{x2xu@odu.edu}}
 }
\maketitle
\begin{abstract}
In this paper we study a gradient flow generated by the Landau-de Gennes free energy that describes nematic liquid crystal
configurations in the space of $Q$-tensors. This free energy density functional is composed of three quadratic terms as the elastic
energy density part, and a singular potential in the bulk part that is considered as a natural enforcement of a physical constraint on
the eigenvalues of $Q$. The system is a non-diagonal parabolic system with a singular potential which trends to infinity logarithmically
when the eigenvalues of $Q$ approach the physical boundary. We give a rigorous proof that for rather general initial data with possibly
infinite free energy, the system has a unique strong solution after any positive time $t_0$. Furthermore, this unique strong solution
detaches from the physical boundary after a sufficiently large time $T_0$. We also give estimate of the Hausdorff measure of the set
where the solution touches the  physical boundary and thus prove a partial regularity result of the solution in the intermediate stage
$(0,T_0)$.

\end{abstract}

{\bf 2010 MSC Code.} 35K25, 35K55, 35K67

{\bf Keywords.} Gradient flow, Hausdorff dimension, Parabolic PDEs

\section{Introduction}

The Landau-de Gennes theory is a continuum theory of nematic liquid crystals \cite{dG93}.
When formulating static or dynamic continuum theories a crucial step is to select an appropriate order parameter that captures the microscopic
structure of the  rod-like molecule systems.  In our framework the order parameter is a matrix-valued function that takes values in the following  so called $Q$-tensor space
\begin{equation}\label{Q-tensor-space}
\mathcal{Q} := \Big\{M\in\RR^{3\times 3}\big| \tr M=0; \,M=M^T\Big\}.
\end{equation}
It is considered as a suitably normalized second order moment of the
probability distribution function that dictates locally preferred orientations of nematic molecular  directors (cf.
\cite{B12, MZ10, NMreview}).

To formulate the problem, let $\TT^n$ be  the unit box/square in $\mathbb{R}^n$ with $n=2$ or $3$. For each  order parameter $Q: \TT^n\to \mathcal{Q}$,  the associated free energy functional $\E(Q)$ consists of the elastic
and the bulk parts, which reads
\begin{equation}\label{free-energy}
\E(Q)  :=  \G(Q)+\mathcal{B}(Q)-\alpha\|Q\|^2_{L^2(\TT^n)}.
\end{equation}
Here  $\G$ stands for the {anisotropic} elastic
energy that contains three quadratic terms of $\nabla{Q}$:
\begin{equation}\label{elastic-energy}
\G(Q) :=
\begin{cases}\displaystyle
\int_{\TT^3}\big(L_1\p_k Q_{ij} \p_k Q_{ij}+L_2\p_j Q_{ik}\p_k Q_{ij}+L_3\p_j Q_{ij}\p_k Q_{ik}\big)\,\ud
x, \quad\mbox{if } Q\in H^1(\TT^n), \\
+\infty,
\qquad\qquad\qquad\qquad\qquad\qquad\qquad\qquad\qquad\qquad\qquad\qquad\;\mbox{otherwise},
\end{cases}
\end{equation}
where $L_1, L_2, L_3$ are material dependent constants. Here and in the sequel $\partial_kQ_{ij}$ denotes the $k$-th spatial partial derivative of the $ij$-th component
of $Q$, and we adopt Einstein
summation convention by summing over repeated Latin letters.
Following \cite{LW16}, we assume
\begin{equation}\label{coefficient-assumption}
L_1 > 3|L_2+L_3|,
\end{equation}
which ensures that \eqref{elastic-energy} fulfills the strong Legendre condition.

Further,
$\mathcal{B}(Q)$ denotes the bulk energy
$$
  \mathcal{B}(Q) := \int_{\TT^3}\psi(Q)\,\ud{x},
$$
where the integrand $\psi(Q)$  is the singular potential introduced in \cite{BM10}:
\begin{equation}\label{singular-potential-def}
\psi(Q) := \begin{cases}
\displaystyle\inf_{\rho\in\mathcal{A}_{Q}}\int_{\mathbb{S}^2}\rho(p)\ln{\rho(p)}\,\ud p,
\quad\mbox{if } -\frac13<\lambda_i(Q)<\frac23,\; 1\leq i\leq 3,\\
+\infty, \qquad\qquad\qquad\qquad\quad\mbox{otherwise}.
\end{cases}
\end{equation}
Here $\lambda_i(Q)$ denotes the $i$-th eigenvalue of the matrix $Q$ and  $\mathcal{A}_{Q}$  is the admissible class  defined by
$$
  \mathcal{A}_{Q}=\left\{\rho(p):\mathbb{S}^2\rightarrow\overline{\mathbb{R}^+}~\big|~ \,
  \|\rho\|_{L^1(\mathbb{S}^2)}=1;\;
   \int_{\mathbb{S}^2}\big(p\otimes{p}-\frac13\Id\big)\rho(p)\,\ud{p}=Q\right\}.
$$
It is noted that the singular potential \eqref{singular-potential-def} imposes {physical constraints} on the eigenvalues of $Q$.
Futher, $\alpha>0$ in \eqref{free-energy} is a temperature dependent constant which characterizes the relative intensity of the molecular Brownian motion and the molecular interaction \cite{BM10}. We refer interested readers to \cite{B12, BM10, BM20, B18, FRSZ15, LXZ20}
 for detailed discussions of basic analytic properties of $\psi$, such as convexity, smoothness
in its effective domain, blow-up rates near the physical boundary, etc. Meanwhile, various problems in static and dynamic configurations concerning $\psi$ can be found in \cite{BP16, DHW19, EKT16, FRSZ14, FRSZ15, GT19, W12}. Specifically, the free energy in related dynamic problems considered so far in the existing literature \cite{DHW19, FRSZ14, FRSZ15, W12} only involves the
$L_1$ isotropic term. Therefore, we are motivated to study the dynamic problem whose free energy contains anisotropic $L_2, L_3$ terms.
It is worth pointing out that the presence of such terms is more than a mere
technical challenge, since
they make it impossible to recover any kind of maximum principle, which
was crucial in \cite{W12}.

This paper is concerned with a rigorous study of the  gradient flow
generated by $\E(Q)$ in the Hilbert space $L^2(\TT^n;\mathcal{Q})$:
\begin{equation}\label{grad flow E}
\left\{
\begin{array}{rlr}
\partial_tQ(t,\cdot)&\in -\partial\E(Q(t,\cdot)),  &t>0,\\
Q(0, x)&=Q_0(x), &x\in\TT^n
\end{array}
\right.
\end{equation}
subject to periodic boundary condition
\begin{equation}\label{BC-periodic}
Q(t, x+e_i)=Q(t, x), \quad\mbox{for } (t, x)\in\mathbb{R}^+\times\partial\TT^n.
\end{equation}
Here in  \eqref{grad flow E}, $\partial\E(Q)$ is formally the variation of the free energy
\eqref{free-energy}. However, due to the singular feature of $\psi(Q)$, it  should be understood as sub-differential  (see Lemma
\ref{lemma-subgradient} for more details.)


Parallel to the $Q$-tensor space  \eqref{Q-tensor-space},
we  introduce  the {\it physical}
$Q$-tensor space by
\begin{equation}\label{q-tensor}
  \QQ  := \Big\{M\in\mathcal{Q}\big|\,  -\frac13< \lambda_1(M)\leq \lambda_2(M)\leq \lambda_3(M)< \frac23\Big\},
\end{equation}
where $\lambda_i(M)$ denotes the $i$-th eigenvalue of the matrix $M$, ordered  non-decreasingly. Any element in $\QQ$ is called  a physical $Q$-tensor.

Our first main result ensures the existence and uniqueness of solutions to the gradient flow \eqref{grad flow E} with rather general initial data.
\begin{theorem}\label{main-theorem-1}
Let $n=2$ or $3$. For any initial data
\begin{equation}
Q_0\in\overline{\{Q\in L^2(\TT^n;\QQ)\mid \mathcal{E}(Q)<\infty\}}^{L^2(\TT^n)},
\end{equation}
there exists a unique
global solution  $Q(t,x):\RR^+\times\TT^n\to \QQ$ of \eqref{grad flow E} such that
\begin{align}\label{strong solu}
\partial_tQ_{ij}&=2L_1\Delta{Q}_{ij}+2(L_2+L_3) \p_j\p_kQ_{ik}-\frac 23(L_2+L_3) \p_k\p_
{\ell}Q_{\ell k}\delta_{ij}\non\\
&\qquad-\frac{\partial\psi}{\partial{Q}_{ij}}+\frac13\tr\Big(\frac{\partial\psi}{\partial{Q}}\Big)\delta_{ij}+2\alpha Q_{ij}
\end{align}
holds almost everywhere in $(0,\infty)\times \TT^n$. And for any fixed $t_0>0$, the solution satisfies
\begin{equation}
Q\in L^\infty(t_0,\infty; H^1(\TT^n)),\qquad \partial_tQ\in L^2_{loc}(t_0,\infty; L^2(\TT^n)),
\end{equation}
and  the energy dissipative equality
\begin{equation}\label{energy identity}
\int_{t_0}^T\big(\|\partial_tQ(t,\cdot)\|_{L^2(\TT^3)}^2+\|\partial\E(Q(t,\cdot))\|^2_{L^2(\TT^3)}\big)\,\ud{t}
= 2\E(Q(t_0))-2\E(Q(T))
\end{equation}
for all $0<t_0<T<+\infty$. Further, $Q(t,\cdot)$ is physical in the sense that
\begin{equation}\label{strict-physicality}
 Q(t, x)\in\QQ, \quad \forall t>0, \; a.e.\; x\in\TT^n.
\end{equation}
\end{theorem}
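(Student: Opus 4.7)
The plan is to decompose the free energy as $\E(Q)=\E_0(Q)-\alpha\|Q\|_{L^2(\TT^n)}^2$, where $\E_0:=\G+\mathcal{B}$. Under the Legendre condition \eqref{coefficient-assumption}, after integration by parts on the periodic torus the anisotropic quadratic form $\G$ is coercive in $\nabla Q$ and convex, while $\mathcal{B}$ inherits convexity from that of the singular potential $\psi$ on its effective domain \cite{BM10}. Hence $\E_0$ is convex, proper and lower semicontinuous on $L^2(\TT^n;\mathcal{Q})$, while $Q\mapsto-\alpha\|Q\|^2$ is a smooth perturbation with Lipschitz gradient, so \eqref{grad flow E} fits the Brezis framework of gradient flows for $\partial\E_0$ perturbed by a Lipschitz map.

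\textbf{Finite-energy data.} For $Q_0$ with $\E(Q_0)<\infty$, I would first invoke the Brezis theorem to produce a unique $Q\in C([0,\infty);L^2)\cap H^1_{loc}((0,\infty);L^2)$ solving the abstract inclusion $\partial_tQ+\partial\E_0(Q)-2\alpha Q\ni 0$ together with the energy identity \eqref{energy identity}. Physicality \eqref{strict-physicality} is then immediate: $\E_0(Q(t))<\infty$ forces $\psi(Q(t,x))<\infty$ a.e., which by \eqref{singular-potential-def} places $Q(t,x)\in\QQ$. To upgrade the abstract inclusion to the pointwise PDE \eqref{strong solu} and to obtain the $L^\infty_tH^1_x$ bound, I would run a Moreau--Yosida / mollified approximation $\psi^{\eps}$ of $\psi$, solve the regularized strictly parabolic system by Galerkin, extract uniform energy and dissipation estimates, and pass to the limit by Aubin--Lions compactness and Minty's monotonicity trick, using the sub-differential characterization of Lemma~\ref{lemma-subgradient} to identify the limit with the Euler--Lagrange operator in \eqref{strong solu}.

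\textbf{General data.} To treat $Q_0$ in the $L^2$-closure of $\{\E<\infty\}$, I would use the $L^2$ Lipschitz dependence of the semigroup on the datum (with constant $e^{2\alpha t}$) coming from convexity of $\E_0$ and the Lipschitz perturbation $-\alpha\|Q\|^2$. Approximating $Q_0$ in $L^2$ by finite-energy data gives a Cauchy sequence of solutions with a unique limit $Q$. The classical regularizing effect for semi-convex gradient flows, of the form $\E(Q(t))\lesssim\E(Q(s))+C/(t-s)$, then propagates the $H^1$ regularity and the strict physicality to every $t\ge t_0>0$; the remaining assertions then follow by applying the finite-energy conclusions on the time interval $[t_0,\infty)$.

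\textbf{Main obstacle.} The delicate step will be the identification of the abstract subgradient with the classical differential operator in \eqref{strong solu}, specifically the $L^2$ integrability of $\partial\psi/\partial Q$ evaluated along the solution. The $L_2,L_3$ couplings preclude any componentwise maximum principle in the spirit of \cite{W12}, so this integrability must be extracted purely from the energy identity, the coercivity provided by \eqref{coefficient-assumption}, and the blow-up of $\psi$ and $\partial\psi/\partial Q$ near $\partial\QQ$, all of which need to cooperate cleanly despite the non-diagonal anisotropic coupling of the elastic operator.
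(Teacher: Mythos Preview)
Your route is sound and would succeed, but it differs from the paper's in two respects.  First, the paper works directly with the $-2\alpha$-convex functional $\E$ in the Ambrosio--Gigli--Savar\'e framework (their Proposition~\ref{theorem-AGS}) rather than splitting $\E=\E_0-\alpha\|Q\|^2$ and invoking Brezis theory with a Lipschitz perturbation; the two frameworks are equivalent here, but the authors explicitly prefer AGS because it delivers the monotonicity of $t\mapsto e^{-2\alpha t}\|\partial\E(Q(t))\|$ (Proposition~\ref{prop-energy-decay}), which is the key input for the later $H^2$ bound in Theorem~\ref{main-theorem-2}.  Second, and more substantively, the paper bypasses your entire Moreau--Yosida/Galerkin/Aubin--Lions/Minty programme for identifying the PDE: once the abstract gradient flow is in hand, they compute $\partial\E$ \emph{directly} in Lemma~\ref{lemma-subgradient} by a density argument (using the truncated test classes $T_\eta(Q)$ on the near-boundary set $A_\eta$), which simultaneously gives the explicit form \eqref{strong solu} and the $L^2$ integrability of $\psi'(Q(t,\cdot))$ as an automatic consequence of $Q(t,\cdot)\in D(\partial\E)$.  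So the ``main obstacle'' you flag---extracting $\partial\psi/\partial Q\in L^2$ without a maximum principle---is dissolved by the subdifferential calculus rather than by compactness, and the paper in fact remarks that it adopted the AGS machinery precisely to avoid the ``fairly complicated approximation procedures'' you propose.  Your approximation scheme is not wrong, just heavier than necessary for Theorem~\ref{main-theorem-1}; the paper reserves Moreau--Yosida regularisation for Section~4, where it is genuinely needed to transfer $H^2$ estimates via $\Gamma$-convergence of gradient flows.
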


It is worthy to point out that due to the energy dissipative property of the gradient flow as well as the convexity of the singular potential, for any $T>0$ one can formally establish the a priori estimate  of $Q$ in $L^\infty_{loc}(0, T; H^1(\TT^n))\cap L^2_{loc}(0, T; H^2(\TT^n))$. As a consequence, existence of weak solutions to \eqref{grad flow E} can be achieved by using two level
approximation schemes as in \cite{W12}, i.e., regularizing the initial data and the singular free energy. However, such
arguments involve fairly complicated approximation procedures. Fortunately Ambrosio--Gigli--Savar{\'e} \cite{AGS08} provides a powerful framework to obtain the
solution under very general assumptions of the initial data.


To establish higher regularity, namely a uniform-in-time $H^2$ bound of
the solution $Q$,  essential difficulties arise from the anisotropic  terms. Without the $L_2+L_3$ terms,
 the convexity of $\psi$ as well as the classical $L^1-L^\infty$ estimate of heat equation ensure the
strict physicality at any positive time (see section $8$ in \cite{W12} for details) and henceforth conventional energy method applies.
Concerning the gradient flow \eqref{grad flow E}, unfortunately the anisotropic terms make such maximum
principle argument invalid. As a consequence, the proof of higher regularity of the solution becomes quite subtle in the sense that
$Q$ might not stay inside any compact subset of $\QQ$. To overcome such a difficulty, we need to make a careful exploitation of its gradient flow structure, as well as to combine several results on the gradient flow theory given in \cite{AGS08} and Gamma-convergence of gradient flows discussed in \cite{S11, SS04}.
These lead to the next theorem which  improves the regularity by establishing  the uniform-in-time $H^2$ bound of the unique solution to the gradient flow \eqref{grad flow E}. Further, it can be shown that this unique solution detaches from its physical boundary after a sufficiently large time $T_0$.

\begin{theorem}\label{main-theorem-2}
For any $t_0>0$ the  solution  established in Theorem
\ref{main-theorem-1} enjoys the improved regularity $Q\in
L^\infty(t_0,+\infty; H^2(\TT^n))$, and for almost every $t\geq t_0$ there holds
\begin{equation}\label{uniform-H2-bound}
\|\Delta Q(t,\cdot)\|_{L^2(\TT^n)}\leq C_L\left(e^{4\alpha}\sqrt{\E(Q(t_0))-\inf\E+1}+2\alpha\|Q(t,\cdot)\|_{L^2(\TT^n)}\right),
\end{equation}
where $C_L$ is expressed by
\begin{equation}\label{C-L}
C_L := \frac{1}{2(L_1-|L_2+L_3|)}\sqrt{\frac{L_1+|L_2+L_3|}{L_1+|L_2+L_3|-2\sqrt{L_1|L_2+L_3|}}}
\end{equation}
Furthermore, under the stronger assumption
\begin{equation}\label{coefficient-assumption-3}
L_1-3|L_2+L_3| -\alpha{C}_{\TT^n}^2 >0,
\end{equation}
where $C_{\TT^n}=(2\pi)^n$ is the Poincar\'e constant in $\TT^n$,
there exists $T_0>0$ such that the unique solution
is strictly physical
for all $t\geq T_0$  in the sense that
\begin{equation}\label{uniform-physicality}
-\dfrac13+\kappa\leq\lambda_i(Q(t, x))\leq\dfrac23-\kappa,\quad\forall x\in\TT^n
\end{equation}
for some  constant $\kappa\in (0,1/6)$.
\end{theorem}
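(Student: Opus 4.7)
The plan is to treat the two assertions of the theorem in sequence. For the uniform-in-time $H^2$ bound, the first step is to extract a pointwise-in-time control of the metric slope $\|\partial\E(Q(t))\|_{L^2}$ from the dissipation identity \eqref{energy identity}. Since the elastic quadratic form is positive semidefinite by \eqref{coefficient-assumption} and $\psi$ is convex on its effective domain (see \cite{BM10,FRSZ15}), the functional $\G+\mathcal{B}$ is convex and $\E$ is $(-2\alpha)$-convex on $L^2(\TT^n;\mathcal{Q})$. The gradient-flow machinery of \cite{AGS08} for semi-convex functionals then provides the contraction-type estimate
\begin{equation*}
\|\partial\E(Q(t))\|_{L^2}\le e^{2\alpha(t-s)}\|\partial\E(Q(s))\|_{L^2},\qquad 0<t_0<s<t,
\end{equation*}
for a.e.\ $s,t$. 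Combining this with \eqref{energy identity} on the interval $(t-1,t)\cap(t_0,t)$ and integrating out $s$ yields an a.e.\ bound on $\|\partial\E(Q(t))\|_{L^2}$ of the order $e^{c\alpha}\sqrt{\E(Q(t_0))-\inf\E+1}$ for an explicit constant $c$; the additive $+1$ absorbs the boundary layer $t\in(t_0,t_0+1)$.

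The second step is to convert this bound on the subdifferential into an $L^2$ bound on $\Delta Q$. Rewriting $\partial\E(Q)=-\mathcal{L}Q+\partial\psi/\partial Q-2\alpha Q$, where $\mathcal{L}$ denotes the traceless anisotropic second-order operator on the right-hand side of \eqref{strong solu}, I pair the identity with $-\Delta Q$ in $L^2(\TT^n)$. Convexity of $\psi$, after integration by parts, gives
\begin{equation*}
-\int_{\TT^n}\frac{\partial\psi}{\partial Q}(Q):\Delta Q\,\ud x=\int_{\TT^n}\frac{\partial^2\psi}{\partial Q^2}(Q)[\partial_k Q,\partial_k Q]\,\ud x\ge 0,
\end{equation*}
while a direct Fourier-side computation on $\TT^n$ using only \eqref{coefficient-assumption} supplies the coercivity
\begin{equation*}
\int_{\TT^n}\mathcal{L}Q:\Delta Q\,\ud x\ge C_L^{-2}\|\Delta Q\|_{L^2}^2
\end{equation*}
with precisely the constant $C_L$ from \eqref{C-L}. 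Combining these with the trivial bound $|2\alpha\int Q:\Delta Q|\le 2\alpha\|Q\|_{L^2}\|\Delta Q\|_{L^2}$ and the slope estimate from the first step produces \eqref{uniform-H2-bound}. The principal technical difficulty here is the rigorous justification of the integration by parts against the singular field $\partial\psi/\partial Q$: since this object blows up at $\partial\QQ$, I would carry out the formal computation on a smooth Moreau--Yosida regularisation $\psi_\eps$ of $\psi$ along the corresponding approximating gradient flows, and then pass to the limit $\eps\to 0$ using the $\Gamma$-convergence of gradient flows framework developed in \cite{S11,SS04}, keeping the constants independent of $\eps$ by virtue of monotonicity of $\psi_\eps$ and the coercivity of $\mathcal{L}$.

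For the strict physicality claim, the assumption \eqref{coefficient-assumption-3} combined with the Poincar\'e inequality on $\TT^n$ ensures that $\G-\alpha\|Q\|_{L^2}^2$ is uniformly coercive on $H^1(\TT^n)$, so that $\E$ admits a unique minimiser $Q_\star\in\QQ$ and $\E(Q(t))\downarrow \inf\E$ as $t\to\infty$. The minimiser $Q_\star$ is necessarily strictly physical: if $\lambda_i(Q_\star)$ touched $-1/3$ or $2/3$ on a set of positive measure, then $\psi(Q_\star)=+\infty$ there, contradicting $\E(Q_\star)<+\infty$; a quantitative separation from $\partial\QQ$ then follows from the logarithmic blow-up rate of $\psi$ established in \cite{BM10}. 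The uniform $H^2$ bound from the first part, coupled with Aubin--Lions compactness and the Sobolev embedding $H^2(\TT^n)\hookrightarrow C^0(\TT^n)$ (valid for $n\le 3$), upgrades the $L^2$-convergence $Q(t)\to Q_\star$ to uniform convergence, whence \eqref{uniform-physicality} follows for all $t\ge T_0$ with some $\kappa\in(0,1/6)$. The decisive step in this second half is precisely this $C^0$ upgrade of the convergence to a strictly physical limit; both the improved $H^2$ regularity and the strengthened assumption \eqref{coefficient-assumption-3} are indispensable for it.
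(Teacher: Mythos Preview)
Your treatment of the $H^2$ bound is correct in spirit but takes a genuinely different route from the paper. You obtain the slope estimate $\|\partial\E(Q(t))\|_{L^2}$ exactly as the paper does (Proposition~\ref{prop-energy-decay} plus the energy identity), but you then convert it to a bound on $\|\Delta Q\|_{L^2}$ by testing $\partial\E(Q)$ against $-\Delta Q$, using $\langle\partial\G(Q),-\Delta Q\rangle\ge 2(L_1-|L_2+L_3|)\|\Delta Q\|^2$, the sign $\langle\psi',-\Delta Q\rangle\ge 0$ from convexity, and $\|\nabla Q\|^2\le\|Q\|\,\|\Delta Q\|$. This is simpler than the paper's argument, which instead expands $\|\partial\G(Q)+\psi'_n(Q)\|^2$ and controls the cross term via a triangle inequality for angles in $L^2$ (Lemma~\ref{lemma-triangle-inequality} and Proposition~\ref{lemma-H2-2}); the angle device is what produces the extra factor $\sqrt{(L_1+|L_2+L_3|)/(L_1+|L_2+L_3|-2\sqrt{L_1|L_2+L_3|})}$ in $C_L$. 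Your direct pairing actually yields the sharper prefactor $\tfrac{1}{2(L_1-|L_2+L_3|)}\le C_L$, so your claim that the Fourier coercivity constant equals precisely $C_L^{-2}$ is wrong, but the inequality \eqref{uniform-H2-bound} still follows a fortiori. Your proposed justification via Moreau--Yosida regularisation and $\Gamma$-convergence of gradient flows is exactly what the paper does (Proposition~\ref{proposition-gamma-convergence} and Lemma~\ref{lemma-grad}).

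Your argument for strict physicality, however, has a real gap. You assert that under \eqref{coefficient-assumption-3} the energy $\E$ has a \emph{unique} minimiser $Q_\star$ and that $Q(t)\to Q_\star$ in $L^2$, then upgrade to $C^0$. Neither claim is justified. The functional $\E$ is only $(-2\alpha)$-convex, and restricted to constants it equals $\psi(Q)-\alpha|Q|^2$; for the Ball--Majumdar potential this has, for $\alpha$ above the isotropic--nematic threshold, an entire $\mathbb{RP}^2$ family of minimisers, and nothing in \eqref{coefficient-assumption-3} (which bounds $\alpha$ only against the elastic constants, not against the Hessian of $\psi$) rules this out. Without uniqueness you cannot conclude $L^2$-convergence, and Aubin--Lions does not help: it gives time-compactness, not a limit as $t\to\infty$. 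The paper proceeds differently and more directly: it tests the equation with $-\Delta Q$ and uses \eqref{coefficient-assumption-3} together with Poincar\'e to obtain exponential decay of $\|\nabla Q(t)\|_{L^2}$ via Gronwall, then combines Jensen's inequality (so that $\psi(\bar Q(t))$, and hence the distance of the spatial mean $\bar Q(t)$ to $\partial\QQ$, stays bounded) with the Gagliardo--Nirenberg bound $\|Q-\bar Q\|_{L^\infty}\le C\|\nabla Q\|_{L^2}^{1/2}\|\Delta Q\|_{L^2}^{1/2}$ to force $Q(t,\cdot)$ uniformly close to the strictly physical constant $\bar Q(t)$. This avoids any discussion of minimisers or $\omega$-limits altogether.
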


During the period  $(0, T_0)$,  a partial regularity result of the unique solution can be established, i.e. Hausdorff  dimension
of the set where the solution touches the physical boundary $\partial\QQ$:
\begin{theorem}\label{thm-hausdorff}
Let $Q(t, x)$
be the unique strong solution of \eqref{grad flow E} established in Theorem \ref{main-theorem-2}. Then for a.e. $t\in (0, T_0)$, the contact set
\begin{equation}
\Sigma_t:=\{x\in\TT^n\mid Q(t,x)\in \p \QQ \}
\end{equation}
has the following estimate:
\begin{itemize}
\item $\mbox{dim}_{\mathcal{H}}(\Sigma_t)\leq 2$ for $n=3$.
\item $\mbox{dim}_{\mathcal{H}}(\Sigma_t)= 0$ for $n=2$.
\end{itemize}
\end{theorem}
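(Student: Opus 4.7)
The plan is to combine the uniform $H^2$ regularity from Theorem~\ref{main-theorem-2} with the singular behaviour of $\psi$ at $\partial\QQ$ to bound the upper Minkowski content of the contact set, from which the Hausdorff dimension will follow. Two ingredients drive the argument. First, the Sobolev embedding $H^2(\TT^n)\hookrightarrow C^{0,\alpha}(\TT^n)$ yields that $Q(t,\cdot)$ is H\"older continuous for a.e.~$t\in(0,T_0)$, with exponent $\alpha=1/2$ when $n=3$ and with any $\alpha<1$ when $n=2$, and H\"older seminorm controlled uniformly by \eqref{uniform-H2-bound}. Second, by \eqref{energy identity} we have $\partial_tQ(t,\cdot)\in L^2(\TT^n)$ for a.e.~$t$, so solving \eqref{strong solu} for the bulk term shows that $\partial\psi/\partial Q(Q(t,\cdot))-\tfrac13\tr(\partial\psi/\partial Q(Q(t,\cdot)))\Id$ lies in $L^2(\TT^n)$.

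Fix such a good time $t$. The key singular estimate is the pointwise lower bound
\begin{equation*}
\left|\frac{\partial\psi}{\partial Q}(Q)-\frac13\tr\!\left(\frac{\partial\psi}{\partial Q}(Q)\right)\Id\right|\geq\frac{c_0}{\dist(Q,\partial\QQ)}\qquad\text{for $Q$ near $\partial\QQ$},
\end{equation*}
which reflects the sharp $\dist(Q,\partial\QQ)^{-1}$ blow-up of the Maier--Saupe Lagrange multipliers documented in \cite{BM10,FRSZ15,B12}. Combined with the $L^2$ bound above, this gives $\dist(Q(t,\cdot),\partial\QQ)^{-1}\in L^2(\TT^n)$. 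Assuming $\Sigma_t\neq\emptyset$, for each $x\in\TT^n$ I pick $y\in\Sigma_t$ minimizing $|x-y|$ and use H\"older continuity:
\begin{equation*}
\dist(Q(t,x),\partial\QQ)\leq|Q(t,x)-Q(t,y)|\leq [Q(t,\cdot)]_{C^{0,\alpha}}\,\dist(x,\Sigma_t)^\alpha,
\end{equation*}
whence $\int_{\TT^n}\dist(x,\Sigma_t)^{-2\alpha}\,\ud x<\infty$. Chebyshev's inequality on the tubular sets $\{\dist(\cdot,\Sigma_t)<\eps\}$ then yields $|\{\dist(\cdot,\Sigma_t)<\eps\}|\leq C\eps^{2\alpha}$, i.e.~$\Sigma_t$ has finite upper Minkowski content of dimension $n-2\alpha$, and therefore $\dim_{\mathcal{H}}(\Sigma_t)\leq n-2\alpha$. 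Setting $\alpha=1/2$ for $n=3$ produces the bound $\leq 2$, while letting $\alpha\uparrow 1$ for $n=2$ gives $\dim_{\mathcal{H}}(\Sigma_t)=0$.

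The crux of the argument is the singular lower bound displayed above. Since $\psi$ is invariant under $SO(3)$-conjugation, $\partial\psi/\partial Q$ is coaxial with $Q$ and its eigenvalues are exactly the Maier--Saupe Lagrange multipliers; these diverge at the sharp rate $\dist(Q,\partial\QQ)^{-1}$ as some eigenvalue of $Q$ approaches $-1/3$ or $2/3$, as one sees from the one-dimensional reduction of the Ball--Majumdar variational problem. Subtracting the isotropic part $\tfrac13\tr(\partial\psi/\partial Q)\Id$ does not cancel the divergence because at a generic boundary point only one eigenvalue of $Q$ reaches the physical endpoints while the remaining multipliers stay bounded, so the deviatoric part retains an eigenvalue of order $\dist(Q,\partial\QQ)^{-1}$. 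Making this bound uniform on a neighbourhood of $\partial\QQ$, including near the corners of $\partial\QQ$ where two eigenvalues of $Q$ coincide, is the one non-routine step; the rest of the proof is a packaging of Sobolev embedding with a standard distance-function/Minkowski-content inequality.
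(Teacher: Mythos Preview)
Your proposal is correct and shares the paper's core ingredients: the $H^2$ regularity of Theorem~\ref{main-theorem-2} gives H\"older continuity of $Q(t,\cdot)$ (exponent $1/2$ in 3D, any $\beta<1$ in 2D); the equation \eqref{strong solu} forces the trace-free part of $\psi'(Q(t,\cdot))$ into $L^2$; and the pointwise blow-up bound $|\partial\psi(P)|\gtrsim \dist(P,\partial\QQ)^{-1}$ near $\partial\QQ$ then yields $\dist(\cdot,\Sigma_t)^{-2\alpha}\in L^1(\TT^n)$. The paper also singles out this last inequality as the non-trivial input, quoting it from \cite[Theorem~1.2]{LXZ20} (Proposition~\ref{prop-subdiff} here) rather than arguing it from scratch; your heuristic about the deviatoric part retaining the divergence is in the right spirit, but you should be aware that a clean proof of the uniform lower bound, including near the corners of $\partial\QQ$, is exactly what \cite{LXZ20} supplies.

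Where you genuinely diverge from the paper is in the packaging of the final step. The paper takes a covering $\{B_{i,m}\}$ of $\Sigma_t$, passes to a Vitali disjoint subfamily $\{B^\ast_{j,m}\}$, and on each ball estimates $\int_{B^\ast_{j,m}}|\partial\psi(Q)|^2\gtrsim (r^\ast_{j,m})^{n-2\alpha}$ by comparing $\dist(x,\Sigma_t)$ with the distance to a single contact point in $B^\ast_{j,m}$; summing and letting $m\to\infty$ gives a quantitative bound $\mathcal{H}^{n-2\alpha}(\Sigma_t)\le C\|\partial\psi(Q(t,\cdot))\|_{L^2}^2$ (see \eqref{Hausdorff-measure-estimate}). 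Your route via Chebyshev's inequality, $|\{\dist(\cdot,\Sigma_t)<\eps\}|\le C\eps^{2\alpha}$, and the standard comparison $\dim_{\mathcal H}\le\overline{\dim}_M$ is shorter and avoids the covering lemma entirely, but it only delivers the dimension bound and not the finiteness of $\mathcal{H}^{n-2\alpha}(\Sigma_t)$ itself. Since the theorem as stated asks only for the dimension, your argument is sufficient and arguably more elementary; the paper's Vitali approach buys the extra quantitative Hausdorff-measure estimate.
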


The rest of the paper is organized as follows. Some notations and preliminaries are provided in Section 2. The proofs of the three main results, namely Theorems
\ref{main-theorem-1}, \ref{main-theorem-2}, and \ref{thm-hausdorff},
are given in Sections 3, 4, 5, respectively.

\section{Preliminaries}



We start with a few basic notations in $Q$-tensor theory.
For any $Q\in\mathcal{Q}$,  $|Q|:=\sqrt{\tr(Q^tQ)}$ represents the Frobenius norm of $Q$.
The gradient of the function $\psi(Q)$ will be abbreviated by $\psi'(Q)$, and its components are denoted by $\psi'_{ij}(Q):=\frac{\p \psi(Q)}{\p Q_{ij}}$.
Moreover, we denote $L^2(\TT^n;\mathcal{Q})$ the Hilbert space endowed
with the $L^2$ metric
$$
  \|Q\|_{L^2(\TT^n)}=\sqrt{\int_{\TT^n}\mbox{tr}(Q^tQ)}=\sqrt{\int_{\TT^n}\mbox{tr}(Q^2)},
  \quad\mbox{for } Q: \TT^n\rightarrow\mathcal{Q}.
$$
Here and after, for brevity, $\|\cdot\|_{L^2(\TT^n)}$ will often be written as
$\|\cdot\|_{L^2}$, or simply $\|\cdot\|$.

\smallskip

Next we provide some preliminaries of gradient flow theory in a Hilbert space.
We start with some basic definitions  in a Hilbert space $H$, with inner product $\langle\cdot, \cdot\rangle$ and norm $\|\cdot\|$ (cf. \cite{B11, EV98}).
\begin{definition}
A function $f: H\rightarrow \RR\cup \{+\infty\}$ is called proper if $f$ is not identically equal to $+\infty$. The effective
domain of $f$ is defined by
$$
  D(f)=\big\{u\in H| \,f(u)<+\infty\big\}.
$$
\end{definition}
 By \cite{BM10}, the effective domain $D(\psi )$ is equivalent to
\eqref{singular-potential-def}.
\begin{definition}
Let $\lambda\in\RR$, a $\lambda$-convex function $F: H\rightarrow (-\infty, +\infty]$ is a function satisfying
$$
  F((1-t)u+tv)\leq (1-t)F(u)+tF(v)-\frac{\lambda}{2}t(1-t)\|u-v\|^2, \qquad\forall u, v\in H.
$$
For each $u\in H$,   $\partial{F}[u]$ is defined as the set of $w\in H$ such that
$$
  F(u)+\langle w, v-u \rangle+\frac{\lambda}{2}\|u-v\|^2\leq F(v), \quad\forall v\in H.
$$
The mapping $\partial{F}: H\rightarrow 2^{H}$ is called the subdifferential of $F$. Further,
We say $u\in D(\partial F)$, the domain of $\partial F$, provided $\partial{F}[u]$ is not empty.
\end{definition}

\begin{definition}
We say $u(t)$ is a gradient flow of $F$ starting from $u_0\in H$ if it is a locally absolutely continuous
curve in $(0, +\infty)$ such that
\begin{equation}\label{gradient-flow-def}
\begin{cases}
\partial_tu(t)\in-\partial F(u(t)), \quad \mbox{a.e.}\; t>0\\
\displaystyle\lim_{t\rightarrow 0^+}u(t)=u_0.
\end{cases}
\end{equation}
\end{definition}

The next result   is   due to  \cite[Theorem 4.0.4]{AGS08} which was originally  stated under  metric space setting. For the purpose of proving Theorem \ref{main-theorem-1}, it suffices to  rewrite it in the Hilbert space setting:
\begin{proposition}\label{theorem-AGS}
Let $\lambda\in\RR$ and $F: H\rightarrow (-\infty, +\infty]$ be a proper,
bounded from below, and lower semicontinuous functional. 
Suppose moreover that $F$ is $\lambda$-convex, which, since we work
in an Hilbert space, is equivalent to
assuming that, for each $ \tau\in (0,\frac{1}{\lambda^-})$ with  $\lambda^{-} := \max\{0,-\lambda\}$ and each fixed $w\in D(F)$,  
the functional
\begin{equation}
 \Phi(\tau, w; v)=\frac{1}{2\tau}\|v-w\|^2+F(v),~\forall v\in D(F)
\end{equation}
satisfies the following inequality for every $v_0,v_1\in D(F)$:
\begin{align*}
\Phi(\tau, w; t)\leq (1-t)\Phi(\tau,w;v_0)+t\Phi(\tau,w;v_1)-\frac{1+\lambda\tau}{2\tau}t(1-t)\|v_0-v_1\|^2.
\end{align*}
 Then for each $u_0\in \overline{D(F)}$, $u(t)=\displaystyle\lim_{k\rightarrow+\infty}J_{t/k}^k(u_0)$ with
$J_\tau$ being the resolvent
\begin{equation}
  X\in J_\tau(Y) \Longleftrightarrow X\in\mbox{argmin}\Big\{F(\cdot)+\frac{1}{2\tau}\|Y-\cdot\|^2
  \Big\},
\end{equation}
satisfies
\begin{enumerate}
\item Variational inequality: $u$ is the unique solution to the
evolution variational inequality
\begin{equation}\label{variational-inequality}
\frac12\dfrac{d}{dt}\|u(t)-v\|^2+\frac{\lambda}{2}\|u(t)-v\|^2+F(u(t))\leq F(v),
\qquad \mbox{for a.e. } t>0 \mbox{ and } v\in D(F),
\end{equation}
among all the locally absolutely continuous curves such that
$u(t)\rightarrow u_0$ as $t \downarrow 0^+$.

\item Regularizing effect: $u$ is locally Lipschitz regular, and
$u(t,\cdot)\in D(\E)$ for all $t>0$.
\end{enumerate}
\end{proposition}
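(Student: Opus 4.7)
The statement is the Hilbert-space specialization of the minimizing-movements theorem \cite[Theorem 4.0.4]{AGS08}, and my plan is to outline the discrete-to-continuous passage adapted to this setting. The hypothesis on $\Phi(\tau,w;\cdot)$ is equivalent to $\lambda$-convexity of $F$: the quadratic term $\frac{1}{2\tau}\|\cdot-w\|^2$ is $1/\tau$-convex, so its sum with a $\lambda$-convex $F$ is $(1+\lambda\tau)/\tau$-convex, strictly positive exactly when $\tau\in(0,1/\lambda^-)$. Combined with lower semicontinuity and boundedness from below, this strict convexity delivers a unique minimizer, so the resolvent $J_\tau$ is a single-valued map $H\to D(F)$.

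The backbone of the proof is the Minimizing Movements iteration $U_\tau^n:=J_\tau^n(u_0)$. Optimality of $U_\tau^n$ combined with the $(1+\lambda\tau)/\tau$-convexity of $\Phi(\tau,U_\tau^{n-1};\cdot)$ yields the discrete evolution variational inequality
\[
F(U_\tau^n)+\tfrac{1}{2\tau}\|U_\tau^n-U_\tau^{n-1}\|^2+\tfrac{1+\lambda\tau}{2\tau}\|U_\tau^n-v\|^2\leq F(v)+\tfrac{1}{2\tau}\|v-U_\tau^{n-1}\|^2,\quad\forall v\in D(F),
\]
which, specialised to $v=U_\tau^{n-1}$, gives the discrete dissipation $F(U_\tau^n)+\frac{1}{2\tau}\|U_\tau^n-U_\tau^{n-1}\|^2\leq F(U_\tau^{n-1})$. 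Telescoping then produces uniform bounds on $\sum_n\|U_\tau^n-U_\tau^{n-1}\|^2/\tau$ and a H\"older-$1/2$ estimate in time on the piecewise constant interpolant, uniform in $\tau$.

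The core difficulty, and what I expect to be the most delicate step, is extracting strong convergence of $J_{t/k}^k(u_0)$ to a limit $u(t)$, uniformly on compact time intervals. I would achieve this by exploiting the contractivity $\|J_\tau u_1-J_\tau u_2\|\leq(1+\lambda\tau)^{-1}\|u_1-u_2\|$ of the resolvent, which is a direct consequence of $\lambda$-convexity, to compare different step sizes, combined with the H\"older estimate above for tightness. Passing to the limit in the discrete variational inequality and using lower semicontinuity of $F$ then yields \eqref{variational-inequality} for $u(t)$; uniqueness follows by testing \eqref{variational-inequality} for two candidate solutions against each other and invoking Gr\"onwall. Finally, the regularizing effect is a consequence of the semigroup structure: comparing $u(t+h)$ and $u(t)$ via \eqref{variational-inequality} produces $\|u(t+h)-u(t)\|\leq h|\partial F|(u(t))$ together with finiteness of the slope $|\partial F|(u(t))$ for every $t>0$, forcing $u(t)\in D(F)$ and local Lipschitz regularity of $u$ on $(0,+\infty)$.
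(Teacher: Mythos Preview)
The paper does not prove this proposition at all: it is simply quoted, without proof, as the Hilbert-space restatement of \cite[Theorem~4.0.4]{AGS08}. Your proposal correctly identifies this source and sketches the standard minimizing-movements argument behind it, so you are in fact supplying more than the paper does; there is nothing to compare against on the paper's side.
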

\begin{remark}\label{remark-equivalence}
It is well known that for a $\lambda$-convex function $F: H\rightarrow (-\infty, +\infty]$, a locally absolutely continuous curve
$u(t)$ in $(0, +\infty)$ satisfies \eqref{gradient-flow-def} if and only if it satisfies the evolution variational inequality \eqref{variational-inequality}.
\end{remark}

Now we turn to the $\Gamma$-convergence of gradient flows in a Hilbert space, a theory developed in   \cite{SS04} and  \cite{S11}.
Let $\{u_n\}$ be the solution to the gradient flow
\begin{equation}\label{GF u_n}
\partial_tu_n=-\nabla E_n(u_n)
\end{equation}
of a $C^1$ functional sequence $\{E_n\}$. Assume $E_n$ $\Gamma$-converges to a functional $F$, and  there is a general sense of convergence $u_n\overset{S}{\rightarrow}  u$, relative to which the $\Gamma$-convergence of $E_n$ to $F$ holds. We introduce the ``energy-excess"
along a family of curves $u_n(t)$ with $u_n(t)\overset{S}{\rightarrow}u(t)$ by setting
$$
  \tilde{D}(t)=\limsup_{n\rightarrow\infty}E_n(u_n(t))-F(u(t)).
$$
The main result of \cite {S11} is the following:
\begin{proposition}\label{theorem-serfaty}
Assume $E_n$ and $F$ satisfy  a $\Gamma-\liminf$ relation: if $u_n\overset{S}{\rightarrow} u$ as $n\rightarrow\infty$ then
$$
  \displaystyle\liminf_{n\rightarrow\infty}E_n(u_n)\geq F(u).
$$
Assume that the following two additional conditions hold:
\begin{enumerate}
\item (Lower bound on the velocities) If $u_n(t)\overset{S}{\rightarrow} u(t)$ for all $t\in [0, T)$ then there exists
$f\in L^1(0, T)$ such that for every $s\in [0, T)$
\begin{equation}\label{Serfaty-condition1-1}
\liminf_{n\rightarrow\infty}\int_0^s\|\partial_tu_n(t)\|_{H}^2\,dt \geq \int_0^s\big[\|\partial_tu(t)\|_H^2-f(t)\tilde{D}(t)\big]\,dt.
\end{equation}
\item (Lower bound for the slopes) If $u_n\overset{S}{\rightarrow} u$ then
\begin{equation}\label{Serfaty-condition1-2}
\liminf_{n\rightarrow\infty}\|\nabla E_n(u_n)\|_{H}^2\,dt\geq\|\nabla F(u)\|_H^2-C\tilde{D},
\end{equation}
where $C$ is a universal constant, and $\|\nabla F(u)\|$ denotes the  minimal norm of the elements in $\partial F(u)$.
\end{enumerate}
Assume $u_n(t)$ is  a family of solutions to \eqref{GF u_n} on $[0, T)$ with $u_n(t)\overset{S}{\rightarrow} u(t)$ for all $t\in [0, T)$,
such that
$$
  E_n(u_n(0))-E_n(u_n(t))=\int_0^t\|\partial_tu_n(s)\|_{H}^2\,ds, \quad\forall t\in [0, T).
$$
Assume also that
$$
 \displaystyle\lim_{n\rightarrow\infty}E_n(u_n(0))=F(u(0)),
$$
then $u\in H^1(0, T; H)$ and is a solution to $\partial_tu\in -\partial F(u)$ on $[0, T)$. Moreover, $\tilde{D}(t)=0$ for all $t$ (that is the solutions ``remain well-prepared")
and
$$
  \|\partial_tu_n\|_{H} \xrightarrow{n\to \infty} \|\partial_tu\|_{H}, \quad \|\nabla E_n(u_n)\|_{H}\xrightarrow{n\to \infty} \|\nabla F(u)\|_{H}\quad\mbox{in }\; L^2(0, T).
$$
\end{proposition}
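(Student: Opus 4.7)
The plan is to carry out Serfaty's scheme for $\Gamma$-convergence of gradient flows in the Hilbert setting. The starting point is to symmetrize the given conservation of energy. Since the curves $u_n$ solve $\partial_tu_n=-\nabla E_n(u_n)$, one has $\|\partial_tu_n(s)\|_H=\|\nabla E_n(u_n(s))\|_H$, so the identity in the hypothesis can be rewritten in De~Giorgi form
$$E_n(u_n(0))-E_n(u_n(t))=\tfrac12\int_0^t\|\partial_tu_n(s)\|_H^2\,\ud s+\tfrac12\int_0^t\|\nabla E_n(u_n(s))\|_H^2\,\ud s.$$
This symmetric splitting is essential because the two assumed lower bounds (on velocities and on slopes) act on the two integrals separately.

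Taking $\liminf_{n\to\infty}$ of both sides, I would handle the left-hand side using well-preparedness $\lim_n E_n(u_n(0))=F(u(0))$ together with the $\Gamma$-liminf relation $\liminf_n E_n(u_n(t))\geq F(u(t))$, so that
$$\liminf_n\bigl[E_n(u_n(0))-E_n(u_n(t))\bigr]=F(u(0))-\limsup_n E_n(u_n(t))=F(u(0))-F(u(t))-\tilde D(t).$$
The right-hand side is bounded below by invoking \eqref{Serfaty-condition1-1} and \eqref{Serfaty-condition1-2}. Combining the two estimates delivers the pre-Gronwall inequality
$$F(u(0))-F(u(t))\geq \tilde D(t)+\tfrac12\int_0^t\|\partial_tu\|_H^2\,\ud s+\tfrac12\int_0^t\|\nabla F(u)\|_H^2\,\ud s-\tfrac12\int_0^t\bigl(f(s)+C\bigr)\tilde D(s)\,\ud s.$$

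Next I would furnish the matching upper bound via the chain rule / slope inequality for $F$ along the absolutely continuous limit curve, giving
$$F(u(0))-F(u(t))\leq \int_0^t\|\partial_tu(s)\|_H\,\|\nabla F(u(s))\|_H\,\ud s\leq \tfrac12\int_0^t\|\partial_tu\|_H^2\,\ud s+\tfrac12\int_0^t\|\nabla F(u)\|_H^2\,\ud s,$$
where the outer step is Cauchy--Schwarz and the inner step is Young's inequality. Subtracting this from the previous display cancels both integrals and yields
$$\tilde D(t)\leq \tfrac12\int_0^t\bigl(f(s)+C\bigr)\tilde D(s)\,\ud s.$$
Since $f+C\in L^1(0,T)$ and $\tilde D(0)=0$, Gronwall's lemma forces $\tilde D(t)\equiv 0$. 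Once $\tilde D\equiv 0$ all the intermediate inequalities must saturate, in particular the two successive Young/Cauchy--Schwarz steps, which forces $\partial_tu(t)=-\nabla F(u(t))$ a.e.\ and hence $u\in H^1(0,T;H)$ solves the limiting gradient flow; moreover both $\|\partial_tu_n\|_H\to\|\partial_tu\|_H$ and $\|\nabla E_n(u_n)\|_H\to\|\nabla F(u)\|_H$ in $L^2(0,T)$.

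The main obstacle, in my view, is making the chain-rule upper bound rigorous without assuming $\lambda$-convexity of $F$. First one must extract from \eqref{Serfaty-condition1-1} (and Fatou) that $u$ belongs to $AC^2(0,T;H)$, and second one needs that the minimal-norm element of $\partial F(u)$ is a strong upper gradient, so that $t\mapsto F(u(t))$ is absolutely continuous with $\bigl|\tfrac{d}{dt}F(u(t))\bigr|\leq \|\partial_tu(t)\|_H\,\|\nabla F(u(t))\|_H$. Within the $\lambda$-convex framework of Proposition~\ref{theorem-AGS} this chain rule is automatic; in the broader lower-semicontinuous setting of Proposition~\ref{theorem-serfaty} it is a structural condition on $F$ that one must either postulate or verify in the concrete application (in our case to the Landau--de Gennes energy). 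Once this chain rule is granted, the remainder of the argument is essentially soft.
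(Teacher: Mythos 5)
The paper does not prove this proposition at all: it is quoted verbatim as the main result of Serfaty \cite{S11} (building on \cite{SS04}), so there is no in-paper proof to compare against. Your reconstruction is, in substance, Serfaty's own argument and it is sound: symmetrize the energy identity into De Giorgi form using $\|\partial_tu_n\|_H=\|\nabla E_n(u_n)\|_H$, pass to the $\liminf$ using well-preparedness at $t=0$, the $\Gamma$-$\liminf$ at time $t$, condition \eqref{Serfaty-condition1-1} for the velocity term and condition \eqref{Serfaty-condition1-2} plus Fatou for the slope term, then close the loop with the chain-rule upper bound and Gronwall, and finally read off the gradient-flow equation and the $L^2(0,T)$ convergence of the norms from saturation of the Young and Cauchy--Schwarz inequalities. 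Two small points are worth making explicit if you were to write this out in full. First, the Gronwall step needs $\tilde D$ to be measurable and locally integrable; this follows here because $E_n(u_n(t))\le E_n(u_n(0))$ and $F$ is bounded below, so $0\le\tilde D(t)\le F(u(0))-\inf F$. Second, your identification of the chain rule $\bigl|\tfrac{d}{dt}F(u(t))\bigr|\le\|\partial_tu(t)\|_H\,\|\nabla F(u(t))\|_H$ as the genuine structural hypothesis is exactly right; Serfaty indeed imposes it (or a variant) as an assumption on $F$, and in the present paper it is available for free since $\E$ is $(-2\alpha)$-convex, so the minimal selection of $\partial\E$ is a strong upper gradient in the sense of \cite{AGS08}. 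With those caveats acknowledged, the proposal is a correct and faithful proof of the cited statement.
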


\section{Proof of Theorem \ref{main-theorem-1}: Existence of  solutions}

This section is devoted to the proof of Theorem \ref{main-theorem-1}.
First of all, with the choices  $H=L^2(\TT^n;\mathcal{Q})$ and $F=\E$ (defined in
\eqref{free-energy}),
we show that the assumptions in Proposition \ref{theorem-AGS} are satisfied, so that there exists  a unique solution $Q(t,\cdot)$
in variational inequality setting \eqref{variational-inequality} (see Proposition \ref{theorem-MMS} below). Moreover, since the free energy is $-2\alpha$ convex, the solution achieved in Proposition \ref{theorem-MMS} is equivalent to the solution of the gradient flow \eqref{grad flow E} in sub-differential setting. As a consequence,
we compute explicitly the sub-differential of $\E$, and obtain a unique strong
solution to equation \eqref{strong solu}. Finally, we apply two theorems in \cite{AGS08} to show further regularity properties of $Q$ in Theorem \ref{main-theorem-1}.
Since all the following arguments are valid for both $\TT^3$ and $\TT^2$ with minor modifications, for brevity we discuss the case of $\TT^3$ only.

In this subsection we consider the following settings:
$$
  (H,\|\cdot\|)=L^2(\TT^3;\mathcal{Q}), \qquad F=\E(Q).
$$
To begin with, we need to verify all assumptions in Proposition \ref{theorem-AGS} are valid, which is given in the following two lemmas.

\begin{lemma}\label{lemma-lsc}
The free energy functional $\E$ is proper, bounded from below, $-2\alpha$ convex and lower
semicontinuous  in $L^2(\TT^3;\mathcal{Q})$.

\end{lemma}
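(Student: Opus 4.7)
The plan is to verify the four required properties—properness, boundedness from below, $-2\alpha$-convexity, and lower semicontinuity on $L^2(\TT^3;\mathcal{Q})$—separately for each of the three summands in $\E=\G+\mathcal{B}-\alpha\|\cdot\|_{L^2}^2$, and then combine. Since $\G$ and $\mathcal{B}$ are convex and $-\alpha\|\cdot\|_{L^2}^2$ is $(-2\alpha)$-convex, the sum will automatically be $(-2\alpha)$-convex; the subtle ingredients are the lower bound on $\psi$ (to control the $-\alpha\|Q\|^2_{L^2}$ term) and the $L^2$-lower semicontinuity of the anisotropic part $\G$.

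For properness and lower boundedness, I would first note that $Q\equiv 0$ lies in the effective domain of $\psi$ (the eigenvalues all vanish) and gives $\G(0)=0$, so $\E\not\equiv+\infty$. The bulk integrand admits the uniform lower bound $\psi(Q)\geq-\ln(4\pi)$: this follows from the Gibbs inequality applied to any admissible $\rho\in\mathcal{A}_Q$ against the uniform density $1/(4\pi)$ on $\BS$. The elastic density is nonnegative thanks to \eqref{coefficient-assumption}; and whenever $\psi(Q)<\infty$ the eigenvalue constraint $-\frac13<\lambda_i(Q)<\frac23$ forces the pointwise bound $|Q|^2=\sum\lambda_i^2\leq\frac43$, so $\alpha\|Q\|_{L^2}^2\leq\frac{4\alpha}{3}|\TT^3|$. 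Combining these three estimates gives a finite lower bound for $\E$ on $D(\E)$.

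For $(-2\alpha)$-convexity I would treat each term. The quadratic functional $\G$ is convex on $H^1(\TT^3;\mathcal{Q})$ because, under \eqref{coefficient-assumption}, its integrand is a nonnegative quadratic form in $\nabla Q$; extended by $+\infty$ off $H^1$ it remains convex. The bulk term $\mathcal{B}$ is convex because $\psi$ is convex on $\mathcal{Q}$ (see the properties of $\psi$ recalled from \cite{BM10,FRSZ15}). A direct expansion of $\|(1-t)Q_1+tQ_2\|_{L^2}^2$ shows that $-\alpha\|\cdot\|_{L^2}^2$ satisfies the defining inequality of $(-2\alpha)$-convexity with equality, so adding the three contributions yields the claimed $\lambda$-convexity with $\lambda=-2\alpha$.

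The main obstacle is the $L^2$-lower semicontinuity of $\G$, since $-\alpha\|\cdot\|_{L^2}^2$ is $L^2$-continuous and $\int\psi(Q)\,\ud x$ is $L^2$-lsc by Fatou (using a.e.\ convergence along a subsequence together with $\psi\geq-\ln(4\pi)$ and the lsc of $\psi$ on $\mathcal{Q}$). For $\G$, given $Q_n\to Q$ in $L^2$ with $\liminf_n \G(Q_n)<\infty$, I would pass to a subsequence realizing the liminf; the strong Legendre condition \eqref{coefficient-assumption} provides a coercivity estimate of the form
\begin{equation*}
\G(Q)\;\geq\;c_0\,\|\nabla Q\|_{L^2(\TT^3)}^2,\qquad c_0>0,
\end{equation*}
which bounds $\{Q_n\}$ in $H^1$. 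A further extraction gives weak convergence in $H^1$, and the $L^2$-limit identifies the weak limit as $Q$; weak lower semicontinuity of the convex continuous functional $\G$ on $H^1$ then gives $\G(Q)\leq\liminf_n\G(Q_n)$. This coercivity step is precisely where \eqref{coefficient-assumption} is indispensable: without it the anisotropic cross terms could cancel the $L_1$ term and destroy control of $\|\nabla Q\|_{L^2}$, and the $L^2$-lsc of $\G$ would fail.
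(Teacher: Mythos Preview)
Your overall strategy coincides with the paper's: split $\E=\G+\mathcal{B}-\alpha\|\cdot\|_{L^2}^2$ and verify the four properties termwise. Your treatment of $\mathcal{B}$ (lower bound via the Gibbs inequality, lower semicontinuity via Fatou together with pointwise lower semicontinuity of $\psi$) and of $-\alpha\|\cdot\|_{L^2}^2$ is correct; in fact your Fatou argument for $\mathcal{B}$ is a compressed version of the paper's case analysis, which unpacks the pointwise lsc of $\psi$ by invoking its blow-up near $\partial\QQ$ from \cite{BM10}.

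There is, however, one genuine slip in your handling of $\G$. You assert that under \eqref{coefficient-assumption} the elastic integrand ``is a nonnegative quadratic form in $\nabla Q$'' and that ``the elastic density is nonnegative''. This is false in general: condition \eqref{coefficient-assumption} constrains only the combination $L_2+L_3$, whereas the pointwise integrand depends on $L_2$ and $L_3$ separately. For example, with $L_1=1$, $L_2=10$, $L_3=-10$ (so that $L_1>3|L_2+L_3|=0$), the symmetric trace-free tensor with $p_{13,2}=p_{31,2}=1$, $p_{12,3}=p_{21,3}=-1$ and all other entries zero gives integrand $4L_1-2L_2=-16<0$. What \emph{is} true is that the \emph{integral} $\G(Q)$ is nonnegative and coercive on $H^1(\TT^3)$: one integrates by parts on the periodic box to convert the $L_3$ term into the $L_2$ term, obtaining
\[
\G(Q)=\int_{\TT^3}\Big(L_1|\nabla Q|^2+(L_2+L_3)\,\partial_jQ_{ik}\partial_kQ_{ij}\Big)\,\ud x,
\]
after which the Cauchy--Schwarz bound on the cross term yields $\G(Q)\geq c_0\|\nabla Q\|_{L^2}^2$ with $c_0>0$ depending only on $L_1$ and $|L_2+L_3|$. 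This is precisely the paper's route in \eqref{coercivity}. Once nonnegativity of the functional $\G$ (not of its density) is established in this way, your convexity and weak-$H^1$ lower semicontinuity arguments for $\G$ go through unchanged.
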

\begin{proof}

First we show that the elastic energy $\G$ is nonnegative, convex, and lower semicontinuous in
$L^2(\TT^3;\mathcal{Q})$.
It is proved in \cite{LW16} that  when $L_1>0,L_1+L_2+L_3>0$,    $\G$ satisfies the strong  Legendre condition, which implies the convexity of $\G$.
It suffices to show that  $\G$ is nonnegative when $Q\in H^1(\TT^3)$, which follows from the coefficient assumption \eqref{coefficient-assumption}, integration by parts and the Cauchy-Schwarz inequality:
\begin{align}\label{coercivity}
\G(Q)&\geq\int_{\TT^3}\Big(L_1\partial_kQ_{ij}\partial_kQ_{ij}+L_2\partial_kQ_{ik}\partial_kQ_{ij}+L_3\partial_jQ_{ij}\partial_kQ_{ik}\Big)\,dx\nonumber\\
&=\int_{\TT^3}\Big[L_1\partial_kQ_{ij}\partial_kQ_{ij}+(L_2+L_3)\partial_jQ_{ik}\partial_kQ_{ij}\Big]\,dx\nonumber\\
&\geq\int_{\TT^3}\big(L_1-3|L_2+L_3|\big)|\nabla{Q}|^2\,dx\geq 0.
\end{align}
Besides, since $\G$ is convex and quadratic, it is lower semicontinuous \cite[Theorem 8.1]{GM12}.

Next we show that the functional $\mathcal{B}$ is convex, bounded from below, and lower semicontinuous in $L^2(\TT^3;\mathcal{Q})$.
The convexity of $\mathcal{B}$ follows from \cite{BM10, FRSZ15}. A lower bounded can be derived from the inequality  $x\ln{x}\geq -1/e$ for any $x\geq 0$:
\begin{equation}
\mathcal{B}=\int_{\TT^3}\psi (Q)\,\ud{x} \geq
-4\pi^2|\TT^3|/e.
\end{equation}

To show the lower  semicontinuity of $\mathcal{B}$,  let $Q_n\rightarrow Q$ strongly in
$L^2(\TT^3)$. If $\lm_{n\rightarrow\infty}\psi (Q_n)=+\infty$ on a set of positive measure, then
the proof is done. Thus upon subsequence we assume
\begin{equation}\label{sequence-convergence}
  \lm_{n\rightarrow\infty}\mathcal{B}(Q_n)=\displaystyle\lim_{n\rightarrow\infty}\mathcal{B}(Q_n)<+\infty,
  \;\;\mbox{and}\;\; Q_n(x) \xrightarrow{n\to \infty} Q(x) \;\;\mbox{for a.e. }
  x\in\TT^3.
\end{equation}
Consequently, for all $n\in\NN$ sufficiently large
and a.e. $x\in\TT^3$, all eigenvalues of $Q_n(x)$ are in $(-1/3, 2/3)$.
Moreover, the eigenvalues of $Q(x)$ are in $[-1/3, 2/3]$ since
convergence of eigenvalues  follows from   convergence of the matrices (cf. \cite{R69}).

\smallskip

We claim that for a.e. $x\in\TT^3$ the eigenvalues of $Q(x)$ are in
$(-1/3, 2/3)$. To this aim, we argue by contradiction.
Assume the opposite, i.e.
$E=\big\{x\in\TT^3, \lambda_1(Q(x))=-1/3\big\}$
has positive measure. Then it follows from \cite{BM10} that
$\psi (Q_n(x))\xrightarrow{n\to\infty} +\infty$ in $E$, and henceforth Fatou's
lemma implies
\begin{align*}
\displaystyle\liminf_{n\rightarrow\infty}\mathcal{B}(Q_n)&\geq
\displaystyle\liminf_{n\rightarrow\infty}\int_{E}\psi (Q_n)\,\ud{x}
+\displaystyle\liminf_{n\rightarrow\infty}\int_{\TT^3\setminus E}\psi (Q_n)\,\ud{x}\\
&\geq\int_{E}\displaystyle\liminf_{n\rightarrow\infty}\psi (Q_n)\,\ud{x}-\dfrac{4\pi^2}{e}|\TT^3\setminus E|
=+\infty,
\end{align*}
which is in contradiction with \eqref{sequence-convergence}. Thus the claim is proved.
Since $\psi $ is smooth in $D(\psi)=\QQ$ (see
\cite{FRSZ15}), we have $\psi (Q_n(x))\xrightarrow{n\to\infty}\psi (Q(x))$ for
a.e. $x\in\TT^3$. Thus Fatou's lemma implies
\begin{align*}
\displaystyle\liminf_{n\rightarrow\infty}\mathcal{B}(Q_n)
\geq\int_{\TT^3}\displaystyle\liminf_{n\rightarrow\infty}\psi (Q_n)\,\ud{x}
=\int_{\TT^3}\psi (Q(x))\,\ud{x}=\mathcal{B}(Q).
\end{align*}

To sum up, $\E$ is $-2\alpha$ convex, and lower semicontinuous in $L^2(\TT^3;\mathcal{Q})$. It remains to show $\E$ is proper and bounded from below.
Clearly, $\E(Q)<+\infty$ provided $Q\in H^1(\TT^3)$ and $Q(x)\in D(\psi )$ a.e. $x\in\TT^3$, hence $\E$ is proper. Further, if $Q$ is not physical
then $\E(Q)=+\infty$ , while if $Q$ is physical then
$\|Q\|_{L^2(\TT^3)}$ is bounded, hence $\E$ is bounded from below
since both $\G$ and $\psi$ are bounded from below.

\end{proof}

\begin{remark}
It is noted that the coefficient assumption \eqref{coefficient-assumption} is different from the one in \cite{DG98, KRSZ16}
which ensures the elastic energy $\G$ is coercive only.
\end{remark}

\begin{lemma}\label{prop-convexity}
For any $R, P_0, P_1\in D(\E)$, denote $\gamma_t=(1-t)P_0+tP_1$,
$t\in [0, 1]$, then for
each $0<\tau<1/2\alpha$ the functional
\begin{equation}
  \Phi(\tau, R; \gamma_t) :=
  \dfrac{\|\gamma_t-R\|^2}{2\tau}+\E(\gamma_t)
\end{equation}
is $(1/\tau-2\alpha)$-convex on $\gamma_t$, in the sense that
\begin{align}
\Phi&(\tau, R; \gamma_t)\non\\
&\leq (1-t)\Phi(\tau, R; P_0)+t\Phi(\tau, R;
P_1)-\dfrac{(1-2\alpha\tau)}{2\tau}t(1-t)\|P_1-P_0\|^2, \quad\forall t\in [0, 1].
\end{align}

\end{lemma}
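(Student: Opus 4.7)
The plan is to decompose $\Phi(\tau,R;\gamma_t)$ into its three building blocks and track the convexity contribution of each. The quadratic term $\|\gamma_t-R\|^2/(2\tau)$ is affine-quadratic in $\gamma_t$, so it is exactly $1/\tau$-convex. The elastic and bulk parts $\G$ and $\mathcal{B}$ are already known to be convex by Lemma \ref{lemma-lsc}. The only nonconvex contribution is the quadratic $-\alpha\|Q\|^2$, which is $(-2\alpha)$-convex. Summing these moduli yields the desired rate $1/\tau-2\alpha$, which is positive precisely under the hypothesis $0<\tau<1/(2\alpha)$.

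Concretely, I would first invoke the well-known Hilbert-space identity
\begin{equation*}
\|(1-t)P_0+tP_1-R\|^2=(1-t)\|P_0-R\|^2+t\|P_1-R\|^2-t(1-t)\|P_0-P_1\|^2,
\end{equation*}
apply it with the substitution $\gamma_t=(1-t)P_0+tP_1$, and divide by $2\tau$. The same identity applied to $R=0$ gives
\begin{equation*}
-\alpha\|\gamma_t\|^2=-\alpha(1-t)\|P_0\|^2-\alpha t\|P_1\|^2+\alpha t(1-t)\|P_0-P_1\|^2.
\end{equation*}

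Next, I would use the convexity of $\G$ and $\mathcal B$ established in Lemma \ref{lemma-lsc} to obtain
\begin{equation*}
\G(\gamma_t)+\mathcal B(\gamma_t)\leq (1-t)\bigl(\G(P_0)+\mathcal B(P_0)\bigr)+t\bigl(\G(P_1)+\mathcal B(P_1)\bigr),
\end{equation*}
and combine this with the previous two identities to conclude
\begin{equation*}
\E(\gamma_t)\leq (1-t)\E(P_0)+t\E(P_1)+\alpha t(1-t)\|P_0-P_1\|^2.
\end{equation*}
Finally, adding the contribution from $\|\gamma_t-R\|^2/(2\tau)$ produces the quadratic remainder $-\bigl(\tfrac{1}{2\tau}-\alpha\bigr)t(1-t)\|P_0-P_1\|^2$, which is exactly $-\tfrac{1-2\alpha\tau}{2\tau}t(1-t)\|P_0-P_1\|^2$, as claimed.

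There is no real obstacle here since every piece is already at hand: the lemma is essentially a bookkeeping exercise that tracks the $(-2\alpha)$-convexity of $\E$ (coming solely from the $-\alpha\|Q\|^2_{L^2}$ term) against the $1/\tau$-convexity of the proximal penalty. The only point worth a brief remark is that the restriction $\tau<1/(2\alpha)$ is needed so that the resulting convexity modulus $1/\tau-2\alpha$ is strictly positive, matching exactly the admissible range of $\tau$ in Proposition \ref{theorem-AGS} with $\lambda=-2\alpha$ and $\lambda^-=2\alpha$.
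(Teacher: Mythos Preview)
Your proposal is correct and follows essentially the same route as the paper: both use the Hilbert-space parallelogram identity on the two quadratic terms $\|\gamma_t-R\|^2/(2\tau)$ and $-\alpha\|\gamma_t\|^2$, invoke the convexity of $\G+\mathcal{B}$ from Lemma~\ref{lemma-lsc}, and then collect terms to obtain the remainder $-\tfrac{1-2\alpha\tau}{2\tau}t(1-t)\|P_0-P_1\|^2$. The paper simply writes this as a single chain of (in)equalities without the explanatory narrative you provide, but the argument is the same.
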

\begin{proof}
We infer from the convexity of $\G$ and $\mathcal{B}$ that
\begin{align*}
\Phi(\tau, R;\gamma_t)
&=\dfrac{\big\|(1-t)P_0+tP_1-R\big\|^2}{2\tau}+(\G+\mathcal{B})((1-t)P_0+tP_1)-\alpha\big\|(1-t)P_0+tP_1\big\|^2\\
&\leq\dfrac{(1-t)\|P_0-R\|^2+t\|P_1-R\|^2-t(1-t)\|P_0-P_1\|^2}{2\tau}+(1-t)(\G+\mathcal{B})(P_0)\\
&\qquad+t(\G+\mathcal{B})(P_1)-\alpha\big[(1-t)\|P_0\|^2+t\|P_1\|^2-t(1-t)\|P_0-P_1\|^2\big]\\
&=(1-t)\Phi(\tau, R; P_0)+t\Phi(\tau, R;
P_1)-\dfrac{(1-2\alpha\tau)}{2\tau}t(1-t)\|P_1-P_0\|^2
\end{align*}
\end{proof}

To sum up, we
manage to verify that all assumptions of Proposition \ref{theorem-AGS}
are satisfied, which leads to the following theorem.

\begin{proposition}\label{theorem-MMS}
Let $n=3$. For any initial data
\begin{equation}
Q_0\in \overline{D(\E)}:=\overline{\{Q\in L^2(\TT^3;\QQ)\mid \mathcal{E}(Q)<\infty\}}^{L^2(\TT^3)},
\end{equation}
Let $Q(t)=\displaystyle\lim_{k\rightarrow+\infty}J_{t/k}^k(Q_0)$ with
$J$ being the resolvent
$$
  X\in J_\tau(Y) \Longleftrightarrow X\in\mbox{argmin}\Big\{F(\cdot)+\frac{1}{2\tau}\|Y-\cdot\|^2
  \Big\}.
$$
Then we have
\begin{enumerate}
\item Variational inequality: $Q$ is the unique solution to the
evolution variational inequality
\begin{equation}\label{variational-inequality solution}
\begin{split}
\dfrac{d}{dt}\|Q(t,\cdot)-P\|^2_{L^2(\TT^3)}&-\alpha\|Q(t,\cdot)-P\|^2_{L^2(\TT^3)}+\E(Q(t,\cdot))\leq\E(P),\\
  &\quad \mbox{for a.e. } t>0 \mbox{ and } P\in D(\E),
\end{split}
\end{equation}
among all locally absolutely continuous curves such that
$Q(t,\cdot)\rightarrow Q_0$ as $t \downarrow 0^+$.
\item Regularizing effect: $Q$ is locally Lipschitz, and
$Q(t,\cdot)\in D(\E)$
for all $t>0$. In
particular, $Q$ is physical in the sense that
\begin{equation}
 -\dfrac13<\lambda_i(Q(t, x))<+\dfrac23, \quad \forall t>0, \; a.e.\; x\in\TT^3
\end{equation}
\end{enumerate}
\end{proposition}

To proceed, note that $\E$ is $-2\alpha$ convex in $L^2(\TT^3;\mathcal{Q})$, hence by Remark \ref{remark-equivalence}
we know that $Q(t,\cdot)$ constructed in Proposition \ref{theorem-MMS} is the unique solution to the gradient flow \eqref{grad flow E}.
The following lemma computes explicitly the sub-differential of the free energy $\E$.

\begin{lemma}\label{lemma-subgradient}
For any $Q\in D(\partial\E)$ and $1\leq i, j\leq 3$, we have
\begin{align*}
-\partial\E(Q)_{ij}&=2L_1\Delta{Q}_{ij}+2(L_2+L_3)\p_{kj}Q_{ik}-\dfrac{2(L_2+L_3)}{3}\p_{k\ell}Q_{k\ell}\delta_{ij}\\
&\qquad-\psi'(Q)_{ij}+\dfrac{\mathrm{tr}(\psi '(Q))}{3}\delta_{ij}+2\alpha{Q}_{ij}.
\end{align*}
\end{lemma}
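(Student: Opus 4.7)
The plan is to decompose $\mathcal{E}=\mathcal{G}+\mathcal{B}-\alpha\|\cdot\|_{L^2}^2$, compute the subdifferential of each summand separately, and recombine them via the Moreau--Rockafellar sum rule. The smooth term $-\alpha\|\cdot\|_{L^2}^2$ is Fr\'echet differentiable on $L^2(\TT^n;\mathcal{Q})$ with gradient $-2\alpha Q$, which is the source of the $+2\alpha Q_{ij}$ on the right--hand side.

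First I would compute $\partial\mathcal{G}(Q)$ using that $\mathcal{G}$ is a convex, continuous quadratic form on $H^1(\TT^n;\mathcal{Q})$, hence everywhere G\^ateaux differentiable there. Testing $Q+\eps V$ against a symmetric traceless $V\in L^2(\TT^n;\mathcal{Q})$ and integrating by parts (using $Q_{ij}=Q_{ji}$ to relabel dummy indices and merge the $L_2$ and $L_3$ contributions) gives
\begin{equation*}
\left.\tfrac{d}{d\eps}\right|_{\eps=0}\mathcal{G}(Q+\eps V)=-\int_{\TT^n}V_{ij}\bigl[2L_1\Delta Q_{ij}+2(L_2+L_3)\partial_j\partial_k Q_{ik}\bigr]\,\ud x.
\end{equation*}
Since the $L^2$-representative of $\partial\mathcal{G}(Q)$ must itself lie in $\mathcal{Q}$ and $V$ is symmetric, only the symmetric part of the bracket contributes to the pairing; removing the trace $2(L_2+L_3)\partial_k\partial_\ell Q_{k\ell}$ by subtracting $\tfrac{2(L_2+L_3)}{3}\partial_k\partial_\ell Q_{k\ell}\,\delta_{ij}$ then yields the elastic terms of the claimed formula.

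Next I would identify $\partial\mathcal{B}(Q)$, relying on the fact that $\psi$ is smooth and strictly convex on $D(\psi)=\QQ$ by \cite{BM10,FRSZ15}. The expected identification
$\partial\mathcal{B}(Q)=\bigl\{\psi'(Q)-\tfrac{1}{3}\tr(\psi'(Q))\Id\bigr\}$
(with $\psi'(Q)$ automatically symmetric, and the trace removal again coming from the $L^2$-projection onto $\mathcal{Q}$) follows from the pointwise convexity inequality for $\psi$ after integration, provided one knows that $\psi'(Q)\in L^2$. Establishing this integrability is the main technical obstacle, since $\psi'$ blows up at $\partial\QQ$: following the convex-duality argument of \cite{BM10}, I would test the defining subgradient inequality $\mathcal{B}(R)\ge\mathcal{B}(Q)+\langle w,R-Q\rangle$ against admissible probes $R$ obtained by pushing $Q$ slightly into the interior of $\QQ$, forcing $w$ to agree pointwise with $\psi'(Q)$ modulo its trace and delivering the required $L^2$ bound.

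Finally I would invoke the Moreau--Rockafellar sum rule, whose constraint qualification is straightforward to verify because $D(\mathcal{G})\cap\mathrm{int}(D(\mathcal{B}))$ contains smooth $Q$ taking values in any compact subset of $\QQ$, to conclude that $\partial\mathcal{E}(Q)=\partial\mathcal{G}(Q)+\partial\mathcal{B}(Q)-2\alpha Q$. Summing the three contributions reproduces the expression stated in the lemma, with the singular-potential step being the only substantive difficulty.
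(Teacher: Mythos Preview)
Your overall architecture---decompose $\mathcal{E}$, compute $\partial\mathcal{G}$ by G\^ateaux differentiation, compute $\partial\mathcal{B}$, and recombine via the Moreau--Rockafellar sum rule with the obvious constraint qualification---is exactly what the paper does, and your treatment of $\partial\mathcal{G}$ and of the smooth $-\alpha\|\cdot\|^2$ perturbation is correct.

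The one substantive divergence is in the $\partial\mathcal{B}$ step. You frame the problem as first \emph{establishing} $\psi'(Q)\in L^2$ (via probes that push $Q$ toward the interior of $\QQ$) and then verifying that it is the subgradient. The paper reverses this logic: it starts from an arbitrary $\xi\in\partial\mathcal{B}(Q)$, which is in $L^2$ \emph{by definition} of the subdifferential in a Hilbert space, and shows that $\xi$ must agree with the traceless part of $\psi'(Q)$. Concretely, the paper introduces the sets $A_\eta=\{x:\rho(Q(x))<\eta\}$ (which have vanishing measure since $\psi(Q)\in L^1$), restricts to test directions $R\in L^\infty$ that vanish on $A_\eta$, and computes the two-sided directional derivative of $\mathcal{B}$ along such $R$ using only that $\psi'$ is bounded on the support of $R$. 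This forces $\langle\xi,R\rangle=\langle\psi'(Q),R\rangle$ for all $R$ in $\bigcup_{\eta>0}T_\eta(Q)$, and density of that class in $L^2$ identifies $\xi$ with $\psi'(Q)-\tfrac13\tr(\psi'(Q))\Id$; the $L^2$ membership of $\psi'(Q)$ is then a \emph{consequence}, not a prerequisite.

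Your proposed route---testing the subgradient inequality with $R$ obtained by contracting $Q$ toward the interior---would give one-sided information on $\langle w,Q\rangle$ or on $\langle w,R-Q\rangle$, but it is not clear how that alone yields pointwise identification of $w$ with $\psi'(Q)$ or an a priori $L^2$ bound on $\psi'(Q)$; as written, that step is a gap. The paper's cut-off/density argument is both simpler and avoids the need for any separate integrability estimate.
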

\begin{proof}

To begin with, it is immediate to derive
\begin{equation}
\partial\G(Q)_{ij} =-2L_1 \Delta Q_{ij}
-2(L_2+L_3)\partial_k\partial_jQ_{ik}+\frac23 (L_2+L_3)\partial_{\ell}\partial_kQ_{k\ell} \delta_{ij}.
\end{equation}

Next we need to verify that
$$
\partial\mathcal{B}(Q)=\Big\{\psi'(Q)-\frac13\mathrm{tr}(\psi '(Q))\mathbb{I}_3\Big\}.
$$

\noindent{\em Case 1: $Q$ is strictly physical}.

\medskip

Let $R\in C_c^{\infty}(\TT^3, \mathcal{Q})$.
Then by convexity and smoothness of $\psi$, any element $\xi\in\partial\mathcal{B}(Q)$ satisfies
\begin{align*}
\int_{\TT^3}\big[\psi '(Q): R\big]\,d{x}&=\lim_{\eps\rightarrow 0^+}\frac{\mathcal{B}(Q+\eps R)-\mathcal{B}(Q)}{\eps}\geq \langle\xi, R\rangle_{L^2(\TT^3)},\\
-\int_{\TT^3}\big[\psi '(Q): R\big]\,d{x}&=\lim_{\eps\rightarrow
0^+}\frac{\mathcal{B}(Q-\eps R)-\mathcal{B}(Q)}{\eps}\geq -\langle\xi,
R\rangle_{L^2(\TT^3)},
\end{align*}
which indicates $\langle\xi, R\rangle_{L^2(\TT^3)}= \langle
\psi '(Q), R\rangle_{L^2(\TT^3)}$. By density,
\begin{equation}
\xi=\psi '(Q)-\mathrm{tr}(\psi '(Q))\mathbb{I}_3/3
\end{equation}
 as an
element in the Hilbert space $L^2(\TT^n;\mathcal{Q})$. Hence
$\partial\mathcal{B}(Q)$ $=\{\psi'(Q)-\mathrm{tr}(\psi'(Q))\mathbb{I}_3/3\}$ for any uniformly physical $Q$.

\medskip

\noindent{\em Case 2: $Q$ is not strictly physical}.

\medskip

We define
\begin{equation}\label{def-rho}
\rho(P) := \min_{1\leq i\leq
3}\Big\{\lambda_i(P)+\frac13,\,\frac23-\lambda_i(P)\Big\},\qquad\forall\,
P\in\mathcal{Q},
\end{equation}
and $A_\eta := \{x\in \TT^3:\rho(Q(x))<\eta\}$ for arbitrarily
small $\eta>0$. Since $Q\in D(\partial\E)\subset D(\E)$, we have
$\psi(Q)<+\infty$, thus $|A_\eta|\rightarrow 0$ as $\eta\rightarrow
0^+$. Let
\begin{equation*}
T_\eta(Q) := \{R\in L^\infty(\TT^3; \mathcal{Q}):\ R\equiv 0 \mbox{ on }
A_\eta \}.
\end{equation*}

Let us consider any fixed $\eta>0$, and $R\in T_\eta(Q)$. Then it is
easy to check that $\rho(Q)\ge \eta$ outside $A_\eta$, and for all
sufficiently small $\eps$ we have $\rho(Q\pm\eps R)\ge \eta/2$
outside $A_\eta$, hence
$$
\mathcal{B}(Q+\eps R) =\int_{\TT^3\setminus A_\eta }  \psi (Q+\eps
R)\,\ud{x}+\int_{A_\eta}\psi (Q)\,\ud{x}<+\infty.
$$
This together with the covexity of $\psi$ implies that for any sufficiently small $\eps>0$
\begin{align*}
&\eps\int_{\TT^3\setminus A_\eta}\big[\psi '(Q+\eps R): R\big]\,\ud{x}
\geq\mathcal{B}(Q+\eps R)-\mathcal{B}(Q) \\
=&\int_{\TT^3\setminus
A_\eta}\big[\psi (Q+\eps
R)-\psi (Q)\big]\,\ud{x}
\geq\eps\int_{\TT^3\setminus A_\eta}\big[\psi '(Q): R\big]\,\ud{x},
\end{align*}
and dividing by $\eps$ gives
\begin{align}
\liminf_{\eps\rightarrow 0^+}\frac{\mathcal{B}(Q+\eps R)-\mathcal{B}(Q)}{\eps}&\geq\int_{\TT^3\setminus A_\eta }\big[\psi '(Q): R\big]\,\ud{x},\label{inf-1}\\
\limsup_{\eps\rightarrow 0^+}\frac{\mathcal{B}(Q+\eps R)-\mathcal{B}(Q)}{\eps}
&\leq \limsup_{\eps\rightarrow 0^+}\int_{\TT^3\setminus A_\eta }\big[\psi '(Q+\eps R): R\big]\,\ud{x}.\label{sup-1}
\end{align}
Meanwhile, note that
$$
\psi '\in L^\infty(\TT^3\setminus A_\eta ), \;\;
\lim_{\eps\rightarrow{0^+}}\int_{\TT^3\setminus A_\eta}
\big[\psi '(Q+\eps R): R\big]\,\ud{x}=\int_{\TT^3\setminus A_\eta }
\big[\psi '(Q): R\big]\,\ud{x}=\int_{\TT^3}\big[\psi '(Q): R\big]\,\ud{x}
$$
due to the fact that $R\equiv 0$ on $A_\eta$. This together with
\eqref{inf-1}, \eqref{sup-1} yields
\begin{equation*}
\lim_{\eps\rightarrow 0^+}\frac{\mathcal{B}(Q+\eps R)-\mathcal{B}(Q)}{\eps}=
\int_{\TT^3}\big[\psi '(Q): R\big]\,\ud{x}.
\end{equation*}
Further, as discussed in case 1, for any
$R\in \bigcup_{\eta>0} T_\eta(Q)$
we have
$$
\langle\xi, R\rangle_{L^2(\TT^3)}= \langle\psi '(Q), R\rangle_{L^2(\TT^3)}.
$$
By density, it follows
$\xi=\psi '(Q)-\mathrm{tr}(\psi'(Q))\mathbb{I}_3/3$ as elements of the Hilbert space $L^2(\TT^3; \mathcal{Q})$.
Hence $\partial\mathcal{B}(Q)=\{\psi'(Q)-\mathrm{tr}(\psi '(Q))\mathbb{I}_3/3\}$ even if $Q\subset D(\partial\E)$ is not strictly physical.

By Lemma \ref{lemma-lsc}, $\G$ and
$\mathcal{B}$ are proper, convex and lower semicontinuous, and the
intersection $D(\G)\cap \text{int} D(\mathcal{B})$ is non empty, we get
$\partial(\G+\mathcal{B})(Q)=\partial\G(Q)+\partial\mathcal{B}(Q)$ by
\cite[Theorem~2.10]{B10}. Since the last term
$-\alpha\|Q\|_{L^2(\TT^3)}^2$ is a $C^1$ perturbation of the energy,
we infer
\begin{equation}
 \partial\E(Q)=\partial(\G+\mathcal{B})(Q)-2\alpha{Q}=\partial\G(Q)+\partial\mathcal{B}(Q)-2\alpha{Q},
\end{equation}
 which concludes the proof.
\end{proof}

Proposition \ref{theorem-MMS} leads to  higher regularity of the solution $Q$. Since
the energy $\E $ is $-2\alpha$-convex, and since the solution $Q(t,\cdot)$ satisfies
$Q(t,\cdot)\in D(\E)$ for all time $t>0$, we conclude that for any $t\geq t_0>0$ the function $Q(t,\cdot)$
is the gradient flow of $\E$ in $L^2(\TT^n;\mathcal{Q})$ with initial datum
$Q(t_0, \cdot)\in D(\E)$. Thus we can apply
\cite[Theorem 2.4.15]{AGS08} to obtain
\begin{proposition}
\label{prop-energy-decay}
Let $Q(t, \cdot)$ be the solution given by Theorem \ref{main-theorem-1}. Then
the map
$$
  t\mapsto e^{-2\alpha t} \|\partial\E(Q(t,\cdot))\|_{L^2(\TT^3)}
$$
is nonincreasing, right continuous on $[t_0,+\infty)$ for all
$t_0>0$.
\end{proposition}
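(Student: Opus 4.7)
The plan is to realize this proposition as essentially the specialization of Theorem 2.4.15 in \cite{AGS08} to the Hilbert space $H=L^2(\TT^3;\mathcal{Q})$, the functional $F=\E$, and the convexity parameter $\lambda=-2\alpha$. That theorem asserts that whenever the gradient flow of a proper, lower semicontinuous, $\lambda$-convex functional is launched from an initial datum in the effective domain, the map $t\mapsto e^{\lambda t}\|\partial F(u(t))\|$ is nonincreasing and right continuous along the trajectory. With $\lambda=-2\alpha$ this is precisely the factor $e^{-2\alpha t}$ appearing in the statement.

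The verification of the hypotheses has essentially already been carried out in the preceding subsection. Lemma \ref{lemma-lsc} establishes that $\E$ is proper, bounded below, lower semicontinuous, and $-2\alpha$-convex on $L^2(\TT^3;\mathcal{Q})$. The regularizing effect in Proposition \ref{theorem-MMS} provides $Q(t_0,\cdot)\in D(\E)$ for every $t_0>0$; by the semigroup property of the gradient flow produced via the resolvent scheme, restricting $Q$ to $[t_0,+\infty)$ turns it into the gradient flow of $\E$ issued from the regular datum $Q(t_0,\cdot)$. Hence the hypotheses of \cite[Theorem 2.4.15]{AGS08} are met on each interval $[t_0,+\infty)$ with $t_0>0$, and the conclusion of the proposition follows.

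For completeness I would also sketch the formal mechanism underlying the monotonicity. Combining the $-2\alpha$-convexity inequality at the base points $Q(s)$ and $Q(t)$ in both directions and exploiting $\partial_\tau Q(\tau)=-\partial\E(Q(\tau))$, one obtains after dividing by $t-s$ and letting $s\to t$ the formal differential inequality $\tfrac{d}{dt}\|\partial\E(Q(t))\|^2\leq 4\alpha\|\partial\E(Q(t))\|^2$; Gronwall's inequality then delivers the monotonicity of $t\mapsto e^{-2\alpha t}\|\partial\E(Q(t))\|$, while right continuity follows from monotonicity together with the lower semicontinuity of the slope along trajectories produced by the flow. Since the result is fundamentally a quotation, no serious obstacle is anticipated beyond matching the sign convention of the convexity parameter $\lambda$ with the exponent $e^{-2\alpha t}$ and confirming that the flow has regularized enough by time $t_0$ to launch from $D(\E)$, both of which are already in hand.
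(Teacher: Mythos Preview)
Your proposal is correct and matches the paper's approach exactly: the paper likewise deduces the proposition directly from \cite[Theorem~2.4.15]{AGS08}, after noting that $\E$ is $-2\alpha$-convex (Lemma~\ref{lemma-lsc}) and that the regularizing effect of Proposition~\ref{theorem-MMS} places $Q(t_0,\cdot)\in D(\E)$ so the flow can be restarted from $t_0$. Your additional heuristic paragraph on the underlying differential inequality is extra but harmless.
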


Finally, combining \cite[Corollary~2.4.11]{AGS08} and
\cite[Theorem~2.3.3]{AGS08} we obtain the energy identity \eqref{energy identity}

\begin{proposition}\label{prop-L2L2}
The solution $Q(t, \cdot)$ given by Theorem \ref{main-theorem-1} satisfies the
energy equality
\begin{equation*}
\int_{t_0}^T\frac12\big(\|\partial_tQ(t,\cdot)\|_{L^2(\TT^3)}^2+\frac12\|\partial\E(Q(t,\cdot))\|^2_{L^2(\TT^3)}\big)\,\ud{t}
=\E(Q(t_0))-\E(Q(T))
\end{equation*}
for all $0<t_0<T<+\infty$.
\end{proposition}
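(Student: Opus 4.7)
The plan is to derive the energy equality as a direct consequence of the $-2\alpha$-convex gradient flow structure already established for $\E$ in the Hilbert space $L^2(\TT^3;\mathcal{Q})$, without carrying out any further PDE computation. The starting observation is that for any $t_0>0$ the regularizing effect in Proposition \ref{theorem-MMS} ensures $Q(t_0,\cdot)\in D(\E)$, so the curve $t\mapsto Q(t,\cdot)$ restricted to $[t_0,\infty)$ is the gradient flow of $\E$ with a finite-energy initial datum. Moreover, by Lemma \ref{lemma-subgradient} the subdifferential $\partial\E(Q)$ is single valued on $D(\partial\E)$, hence the pointwise relation $\partial_tQ(t,\cdot)=-\partial\E(Q(t,\cdot))$ holds a.e., which in particular gives
$$
\|\partial_tQ(t,\cdot)\|_{L^2(\TT^3)}=\|\partial\E(Q(t,\cdot))\|_{L^2(\TT^3)}\quad\text{for a.e. } t>t_0.
$$

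Next I would invoke \cite[Corollary~2.4.11]{AGS08}, whose hypotheses are in place because $\E$ is proper, lower semicontinuous and $-2\alpha$-convex by Lemma \ref{lemma-lsc} and Lemma \ref{prop-convexity}. This corollary identifies, for the curve $Q(t,\cdot)$, the metric derivative with $t\mapsto\|\partial_tQ(t,\cdot)\|_{L^2}$ and the local slope $|\partial\E|(Q(t,\cdot))$ with $\|\partial\E(Q(t,\cdot))\|_{L^2}$, and guarantees that the slope is a strong upper gradient for $\E$. Feeding these identifications into the abstract De Giorgi energy identity provided by \cite[Theorem~2.3.3]{AGS08} yields, for every $0<t_0<T<\infty$,
$$
\E(Q(t_0))-\E(Q(T))=\frac{1}{2}\int_{t_0}^T\bigl(\|\partial_tQ(t,\cdot)\|_{L^2(\TT^3)}^2+\|\partial\E(Q(t,\cdot))\|^2_{L^2(\TT^3)}\bigr)\,\ud t,
$$
which is the claimed equality (and is consistent with \eqref{energy identity} in Theorem \ref{main-theorem-1}).

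The only step requiring genuine care is the translation of the abstract AGS machinery, formulated in generalized metric spaces, to the present Hilbert-space functional setting. Concretely, I must check that the curve $Q$ produced by Proposition \ref{theorem-MMS} is locally absolutely continuous on $(0,\infty)$ with values in $D(\E)$, that it satisfies the evolution variational inequality \eqref{variational-inequality solution}, and that the $L^2$-norm of the subdifferential coincides with the descending slope of $\E$. Each of these ingredients has already been recorded earlier in the section (the first two explicitly in Proposition \ref{theorem-MMS}, the third through Lemma \ref{lemma-subgradient} combined with the $\lambda$-convexity and lower semicontinuity of $\E$), so the expected obstacle is essentially bookkeeping rather than analysis, and no new estimate on the PDE is needed.
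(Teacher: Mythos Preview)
Your proposal is correct and follows essentially the same route as the paper: the energy identity is obtained by combining \cite[Corollary~2.4.11]{AGS08} with \cite[Theorem~2.3.3]{AGS08}, after noting that the $-2\alpha$-convexity, lower semicontinuity and properness of $\E$ (Lemmas~\ref{lemma-lsc} and~\ref{prop-convexity}) together with the regularizing effect $Q(t_0,\cdot)\in D(\E)$ from Proposition~\ref{theorem-MMS} place us squarely in the AGS framework. Your additional remarks on identifying metric derivative and slope with their Hilbert-space counterparts are exactly the bookkeeping the paper leaves implicit.
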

In conclusion, the proof of Theorem \ref{main-theorem-1} is complete.
\begin{remark}
The nonincreasing property of the energy term $e^{-2\alpha t} \|\partial\E(Q(t,\cdot))\|_{L^2(\TT^3)}$ in Proposition \ref{prop-energy-decay} will play an essential
role in the proof of Theorem \ref{main-theorem-2}, which is the main reason that the Ambrosio-Gigli-Savare's gradient flow theory in \cite{AGS08}
is adopted in this section other than the classical Brezis-Pazy's theory.
\end{remark}


\section{Proof of Theorem \ref{main-theorem-2}: higher regularity of solutions}
This section is devoted to the proof of Theorem \ref{main-theorem-2}.
Since there is only minor modification of arguments between $\TT^2$ and $\TT^3$, we only discuss the case in $\TT^3$.

It is noted that the maximum principle argument utilized in \cite{W12} fails due to the presence of the anisotropic terms, hence the solution $Q(t, \cdot)$ is not ensured to
stay detached from the physical boundary $\p \mathcal{Q}$ at any positive time $t$. To achieve the proof, we have to put together several results in \cite{AGS08, SS04, S11}, and to  make a full exploitation of the gradient flow structure in \eqref{grad flow E}.

Our main strategy is as follows. First of all,
to avoid the singular feature of $\partial\E(Q)$ in \eqref{grad flow E}, we shall consider a sequence of smooth gradient flows \eqref{grad flow N} that are generated by an approximation sequence $\{\E_n\}$ defined in \eqref{energy-sequence} of the free energy $\E$. Secondly, we will prove $\Gamma-$ convergence of $\{\E_n\}$ to $\E$ in Proposition \ref{proposition-gamma-convergence}, which together with energy dissipative equality achieved in Proposition \ref{prop-L2L2} we can show the ``convergence" of the gradient flow sequence \eqref{grad flow N} to \eqref{grad flow E}. Next, we will show in Proposition \ref{lemma-H2-2} that the solution sequence $\{Q_n\}$ to the gradient flow sequence \eqref{grad flow N} is in $H^2(\TT^3; \mathcal{Q})$ space, and give a corresponding estimate of the $H^2$ bound of $Q$ in terms of
$\|\partial\E(Q)\|$, which together with Proposition \ref{prop-energy-decay} leads to the uniform-in-time bound for $\|\partial\E(Q)\|$. Finally, we make
use of the convexity of $\psi $ and Sobolev interpolation inequalities to derive strict physicality of the solution at all large times.

Here and after, we denote $\{\psi_n\}$ the sequence of functions
that is used in \cite{W12} to approximate the Ball-Majumdar bulk
potential $\psi $:  first, we introduce the Moreau-Yosida approximations
\begin{equation}\label{Moreau-Yosida}
\tilde{\psi}_n(Q) := \inf_{A\in \mathcal{Q}}\{ n|A-Q|^2+\psi (A)\}, \qquad Q\in\mathcal{Q}.
\end{equation}
Then using a smooth regularization we define
\begin{equation}\label{mollification}
\psi_n(Q)=n^5\int_{\mathcal{Q}}\tilde{\psi}_n\big(n(Q-R)\big)\phi(R)\,dR, \qquad Q\in\mathcal{Q}.
\end{equation}
Here $\phi\in C_c^{\infty}( \mathcal{Q} , \overline{R^+})$ is of unit mass.
Let us recall \cite[Proposition~3.1]{W12} for each $n\geq 1$, we have

\bigskip

\noindent$(\textbf{M0})$ $\psi_n$ is an isotropic function of $Q$.

\smallskip

\noindent$(\textbf{M1})$ $\psi_n$ is both smooth and convex in
$\mathcal{Q}$.

\smallskip

\noindent$(\textbf{M2})$ $\psi_n$ is bounded from below, i.e.,
$-4\pi^2|\TT^3|/e\leq\psi_n(R)$, $\forall R\in\mathcal{Q}$,
$\forall n\geq 1$.

\smallskip

\noindent$(\textbf{M3})$ $\psi_n\leq\psi_{n+1}\leq\psi $ on
$\mathcal{Q}$ for $n\geq 1$.

\smallskip

\noindent$(\textbf{M4})$ $\psi_n\rightarrow\psi $ in
$L_{loc}^{\infty}\big(D(\psi )\big)$ as $n\rightarrow\infty$, and
$\psi_n$ is uniformly divergent on $\mathcal{Q}\setminus
D(\psi )$

\smallskip

\noindent$(\textbf{M5})$
$\frac{\partial\psi_n}{\partial{Q}}\rightarrow\frac{\partial\psi}{\partial{Q}}$
in $L_{loc}^{\infty}\big(D(\psi )\big)$ as $n\rightarrow\infty$.

\smallskip

\noindent$(\textbf{M6})$ There exist constants $\lambda_n, \Lambda_n>0$ that
may depend on $n$, such that
$$
  \lambda_n|R|-\Lambda_n\leq\Big|\psi_n'(R)-\frac13\mathrm{tr}(\psi_n'(R))\mathbb{I}_3\Big|\leq \lambda_n|R|+\Lambda_n, \quad\forall R\in\mathcal{Q}
$$

Besides the aforementioned properties $(\textbf{M0})$ to $(\textbf{M6})$, we need to further derive the following finer estimate
of the sequence $\{\psi_n\}$, in order to prove the later Proposition \ref{proposition-gamma-convergence} and Lemma \ref{lemma-grad}.
\begin{lemma}\label{lemma-approximation-sequence}
For any $n\in\mathbb{N}$, there exists a generic constant $C_n>0$ such that
\begin{align}\label{lower-bound}
\psi_n(Q)&\geq C_n|Q|^2, \qquad\mbox{outside a fixed compact
subset in } \mathcal{Q}.
\end{align}
Moreover, $C_n\rightarrow+\infty$ as $n\rightarrow+\infty$.
\end{lemma}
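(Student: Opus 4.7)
The key observation is that the effective domain $D(\psi)=\QQ$ is bounded, so the Moreau--Yosida envelope $\tilde{\psi}_n$ defined in \eqref{Moreau-Yosida} must penalize the distance from $Q$ to this bounded set. For $|Q|$ large this distance is comparable to $|Q|$, yielding a quadratic lower bound with $n$-dependent coefficient that diverges as $n\to\infty$. The plan proceeds in two steps: first establish such a bound for $\tilde{\psi}_n$, then lift it to $\psi_n$ through the convolution-like formula \eqref{mollification}.

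For the first step, any physical $A$ satisfies $|A|^2=\sum_i\lambda_i^2(A)\leq 4/3$ since $\lambda_i(A)\in(-1/3,2/3)$. Non-physical $A$ contribute $+\infty$ to the infimum in \eqref{Moreau-Yosida}, so only physical $A$ are relevant, and all such matrices lie in the ball $B(0,2/\sqrt{3})$. Combined with the elementary pointwise bound $\psi\geq -4\pi/e$ (coming from $x\ln x\geq -1/e$ integrated over $\mathbb{S}^2$), the definition gives
\begin{equation*}
\tilde{\psi}_n(Q)\;\geq\;n\,\dist(Q,\QQ)^2-\frac{4\pi}{e}\;\geq\;n\bigl(|Q|-2/\sqrt{3}\bigr)^2-\frac{4\pi}{e}.
\end{equation*}
Choosing a fixed radius $R_0$ large enough that $(n/8)|Q|^2$ absorbs the additive $-4\pi/e$ uniformly in $n\geq 1$, one obtains $\tilde{\psi}_n(Q)\geq (n/8)|Q|^2$ whenever $|Q|\geq R_0$.

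For the second step, let $r_0 := \sup\{|R|:R\in\operatorname{supp}\phi\}$ and set $R^* := \max(2r_0,2R_0)$. For $|Q|\geq R^*$ and every $R\in\operatorname{supp}\phi$, one has $|n(Q-R)|\geq n|Q|/2\geq R_0$ for every $n\geq 1$, so the first-step bound applies at the point $n(Q-R)$. Substituting into \eqref{mollification} and using $\int_{\mathcal{Q}}\phi=1$ yields
\begin{equation*}
\psi_n(Q)\;=\;n^5\int_{\mathcal{Q}}\tilde{\psi}_n(n(Q-R))\,\phi(R)\,\ud R\;\geq\;n^5\cdot\frac{n}{8}\cdot\frac{n^2|Q|^2}{4}\;=\;\frac{n^8}{32}|Q|^2,
\end{equation*}
which gives \eqref{lower-bound} on $\{|Q|\geq R^*\}$ with $C_n := n^8/32\to+\infty$.

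The main technical obstacle is keeping the compact set independent of $n$: the bound on $\tilde{\psi}_n$ a priori requires an $n$-dependent threshold to absorb the $-4\pi/e$ error term. This works out because the argument $n(Q-R)$ inside $\tilde{\psi}_n$ is already amplified by $n$, so the threshold $|n(Q-R)|\geq R_0$ is satisfied on a fixed neighborhood of infinity for every $n\geq 1$. Hence the mollification scale actually helps, rather than hinders, the desired uniformity.
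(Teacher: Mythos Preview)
Your approach is essentially the same as the paper's: both exploit the boundedness of $D(\psi)=\QQ$ to bound $\tilde{\psi}_n(Q)$ from below by $n\,\dist(Q,\QQ)^2+\inf\psi$, deduce a quadratic lower bound $\tilde{\psi}_n(Q)\geq c\,n|Q|^2$ outside a fixed compact set, and then pass through the mollification \eqref{mollification}. The paper stops at $\tilde{\psi}_n(Q)\geq (n/2)|Q|^2$ on $\{|Q|>4\,\mathrm{diam}(D(\psi))\}$ and simply asserts that the mollification step goes through, whereas you carry that step out explicitly and correctly observe that the rescaled argument $n(Q-R)$ keeps the threshold $n$-independent while boosting the constant to $C_n=n^8/32$; this extra care is an improvement in exposition but not a different idea.
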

\begin{proof}

Since the null matrix $0\in D(\psi )$, taking $A=0$ in \eqref{Moreau-Yosida} we get
the upper bound
$$
 \tilde{\psi}_n(Q)\leq n|Q|^2+\psi (0).
$$
On the other hand, since such infimum in \eqref{Moreau-Yosida} is finite, there exists a
minimizing sequence $A_m\subset D(\psi )$ such that
\begin{align}
&\tilde{\psi}_n(Q)=\lim_{m\rightarrow+\infty} (n\,|A_m-Q|^2+\psi (A_m))\nonumber\\
\geq &\lim_{m\rightarrow+\infty}
n|A_m-Q|^2+\inf\psi  \geq n\, \mbox{dist}(Q,D(\psi ))^2+\inf\psi.
\end{align}

By the triangle inequality, we have
$$
  |Q|\leq \mbox{dist}(Q,D(\psi ))+\mbox{dist}(0,D(\psi )),
$$
and since $0\in D(\psi)$, we can further get
$$
\mbox{dist}(Q,D(\psi )) \geq |Q|-\mbox{diam}(D(\psi )).
$$
As a consequence, we see
$$
\tilde{\psi}_n(Q)\geq n|Q|^2-2n\,\mbox{diam}(D(\psi ))
|Q|+\mbox{diam}(D(\psi ))^2+\inf\psi \geq
n|Q|\Big(|Q|-2\mbox{diam}(D(\psi ))\Big).
$$
Hence we arrive at the uniform quadratic estimate
\begin{equation}
\frac{n}{2}|Q|^2 \le  \tilde{\psi}_n(Q)
\end{equation}
in $\{|Q| > 4\mbox{diam}(D(\psi ))\}$. Then following the mollification of
$\tilde{\psi}_n$ in \eqref{mollification} one can get \eqref{lower-bound}.
\end{proof}


We introduce the energy sequence $\E_n: \mathcal{Q}\rightarrow\RR\cup\{+\infty\}$
\begin{equation}\label{energy-sequence}
\E_n(Q)=\begin{cases}
\G(Q)+\int_{\TT^3}\psi_n(Q)\,\ud{x}-\alpha\|Q\|_{L^2(\TT^3)}^2, \quad\mbox{if } Q\in H^1(\TT^3)\\
+\infty, \qquad\qquad\qquad\qquad\qquad\qquad\qquad\mbox{otherwise},
\end{cases}
\end{equation}
and establish a $\Gamma$-convergence result.

\begin{proposition}\label{proposition-gamma-convergence}
The sequence of energies $\{\E_n\}$ $\Gamma$-converges to $\E$.
\end{proposition}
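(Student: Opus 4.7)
The plan is to verify both halves of the $\Gamma$-convergence definition in the ambient strong $L^2(\TT^3;\mathcal{Q})$ topology: a recovery sequence $Q_n\to Q$ with $\limsup_n \E_n(Q_n) \leq \E(Q)$, and the $\Gamma$-$\liminf$ inequality $\liminf_n \E_n(Q_n) \geq \E(Q)$ whenever $Q_n\to Q$ strongly in $L^2$.

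For the recovery sequence I would simply take the constant sequence $Q_n\equiv Q$. If $Q\notin D(\E)$ the bound is automatic, so assume $Q\in D(\E)$; then $Q\in H^1(\TT^3)$ and $Q(x)\in D(\psi)$ for a.e.\ $x$. By (\textbf{M3})--(\textbf{M4}) the sequence $\psi_n(Q(x))$ increases to $\psi(Q(x))$ for a.e.\ $x$, and (\textbf{M2}) provides a uniform lower bound. Monotone convergence then yields $\int_{\TT^3}\psi_n(Q)\,\ud x\to\int_{\TT^3}\psi(Q)\,\ud x$, while the $\G$ and $\alpha\|\cdot\|_{L^2}^2$ contributions of $\E_n$ and $\E$ coincide, so $\E_n(Q)\to\E(Q)$.

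For the $\Gamma$-$\liminf$ inequality, fix $Q_n\to Q$ in $L^2$ with $\liminf_n \E_n(Q_n)<+\infty$ (otherwise nothing to prove). Pass to a subsequence along which $\E_n(Q_n)$ is uniformly bounded and each $Q_n\in H^1$. The coercivity estimate $\G(Q_n)\geq (L_1-3|L_2+L_3|)\|\nabla Q_n\|_{L^2}^2$ obtained in the proof of Lemma~\ref{lemma-lsc}, combined with the uniform lower bound in (\textbf{M2}) and the strong $L^2$ control of $Q_n$, forces $\|\nabla Q_n\|_{L^2}$ to be uniformly bounded. A further extraction gives $Q_n\rightharpoonup Q$ weakly in $H^1$ and $Q_n\to Q$ pointwise a.e.; since the convex quadratic form $\G$ is weakly lower semicontinuous on $H^1$, $\liminf_n \G(Q_n)\geq\G(Q)$, and $\|Q_n\|_{L^2}^2\to\|Q\|_{L^2}^2$ trivially. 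For the bulk term I would use a diagonal argument exploiting the monotonicity (\textbf{M3}). For each fixed $m\in\NN$ and every $n\geq m$, $\psi_n(Q_n)\geq \psi_m(Q_n)$; since by (\textbf{M1})--(\textbf{M2}) the function $\psi_m$ is continuous and bounded below, the pointwise a.e.\ convergence $Q_n\to Q$ together with Fatou's lemma gives
$$
\liminf_{n\to\infty}\int_{\TT^3}\psi_n(Q_n)\,\ud x \;\geq\; \liminf_{n\to\infty}\int_{\TT^3}\psi_m(Q_n)\,\ud x \;\geq\; \int_{\TT^3}\psi_m(Q)\,\ud x.
$$
Sending $m\to\infty$, (\textbf{M3})--(\textbf{M4}) (including the uniform divergence of $\psi_n$ on $\mathcal{Q}\setminus D(\psi)$) ensure $\psi_m(Q(x))\nearrow \psi(Q(x))$ for a.e.\ $x$, and monotone convergence gives $\int\psi_m(Q)\to\int\psi(Q)$, whether finite or $+\infty$. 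Assembling the three contributions yields $\liminf_n \E_n(Q_n)\geq\E(Q)$.

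The main subtlety I anticipate is the final passage to the limit in $m$ when $Q(x)$ hits the physical boundary on a set of positive measure: it is precisely the uniform divergence of $\psi_n$ outside $D(\psi)$ from (\textbf{M4}), together with monotonicity (\textbf{M3}), that forces the bulk $\liminf$ to be $+\infty$ as required, reconciling it with $\E(Q)=+\infty$. I do not expect Lemma~\ref{lemma-approximation-sequence} (the quadratic lower bound with $C_n\to+\infty$) to enter this proof; that estimate is reserved for the later $H^2$ analysis of the approximate flow.
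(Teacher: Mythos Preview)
Your proof is correct, and in fact your treatment of the $\Gamma$-$\liminf$ for the bulk term is cleaner than the paper's. The paper argues by a case distinction: when $\mathcal{B}(Q)<+\infty$ it decomposes $\TT^3$ into a region $\TT^3_\eps$ where $Q$ is uniformly physical (and invokes $(\textbf{M5})$ there) versus a small remainder; when $\mathcal{B}(Q)=+\infty$ it splits again according to whether $\{Q\notin D(\psi)\}$ has positive measure. Your monotonicity trick---freezing $m$, using $\psi_n\geq\psi_m$ for $n\geq m$ together with Fatou and continuity of $\psi_m$, then sending $m\to\infty$ by monotone convergence---handles all of these cases uniformly and bypasses the $\TT^3_\eps$ machinery entirely. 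Both approaches use the same constant recovery sequence for the $\Gamma$-$\limsup$.

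One correction to your closing remark: the paper \emph{does} invoke Lemma~\ref{lemma-approximation-sequence} within this very proof, not for the $\Gamma$-$\liminf$ but for the preliminary compactness (equi-coercivity) step. There the goal is to show that $\sup_n \E_n(Q_n)<+\infty$ forces $\sup_n\|Q_n\|_{L^2}<+\infty$ \emph{without} assuming $Q_n\to Q$ in $L^2$, and the quadratic lower bound $\psi_n(Q)\geq C_n|Q|^2$ is precisely what supplies it. You are right that this is not needed for the bare $\Gamma$-$\liminf$ inequality (where strong $L^2$ convergence is given), but the paper bundles equi-coercivity into the same proposition, so your assertion that Lemma~\ref{lemma-approximation-sequence} is reserved for the later $H^2$ analysis is not quite accurate.
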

\begin{proof}
We first show compactness. Let us assume
$\displaystyle\liminf_{n\rightarrow+\infty}\E_n(Q_n)<+\infty $. Upon subsequence,
we may assume
\begin{equation}\label{contradiction}
\liminf_{n\rightarrow+\infty}\E_n(Q_n)=\lim_{n\rightarrow+\infty}\E_n(Q_n)<+\infty
,\qquad \displaystyle\sup_{n\in\mathbb{N}}\E_n(Q_n)<+\infty.
\end{equation}
We need first to ensure the existence of a strong limit $Q$. We claim that $Q_n$ is uniformly bounded in $L^2(\TT^3)$. Otherwise there exists a subsequence $\{Q_{n_k}\}$, such that $\|Q_{n_k}\|_{L^2(\TT^3)}\rightarrow +\infty$, it then follows directly from
Lemma \ref{lemma-lsc} and \eqref{lower-bound} that $\E_{n_k}(Q_{n_k})\rightarrow+\infty$, which is in contradiction with the assumption \eqref{contradiction}.

Further, note that
\begin{align*}
&(L_1-3|L_2+L_3|)\sup_n \|\nabla{Q}_n\|_{L^2(\TT^3)}^2\\
&\leq\sup_n\G(Q_n)\leq \sup_n\E_n(Q_n)+\alpha\sup_n\|Q_n\|_{L^2(\TT^3)}^2
+\inf_{n,P}\int_{\TT^3}|\psi_n(P)|\,\ud{x} <+\infty.
\end{align*}
Thus $Q_n$ is uniformly bounded in $H^1(\TT^3)$, and
$Q_n\rightarrow Q$ (up to a subsequence) strongly in $L^2(\TT^3)$.

\smallskip

Next we show $\Gamma-\limsup$ inequality. That is, for any $Q\in\mathcal{Q}$
there exists a recovery sequence $Q_n$ such that
\begin{equation}\label{Gamma-limsup}
\limsup_{n\rightarrow+\infty}\E_n(Q_n)\leq \E(Q).
\end{equation}
Without loss of generality we assume $\E(Q)<+\infty$. Taking
$Q_n=Q$, for every $n\geq 1$ we get from $(\textbf{M3})$ that
$$
\G(Q_n)-\alpha\|Q_n\|_{L^2(\TT^3)}^2\equiv\G(Q)-\alpha\|Q\|_{L^2(\TT^3)}^2,\quad\int_{\TT^3}
\psi_n(Q_n)\,\ud x \leq \int_{\TT^3} \psi(Q)\,\ud x, \quad\forall
n\geq 1.
$$
In all, \eqref{Gamma-limsup} is verified.

\smallskip

We proceed to show $\Gamma-\liminf$ inequality. That is, for any
sequence $Q_n\rightarrow Q$ strongly in $L^2(\TT^3)$, it holds
\begin{equation}\label{Gamma-liminf}
\liminf_{n\rightarrow+\infty}\E_n(Q_n)\geq\E(Q).
\end{equation}
Without loss of generality we assume
$\displaystyle\liminf_{n\rightarrow+\infty}\E_n(Q_n)<+\infty$. Again
upon subsequence, we may assume
$$
\liminf_{n\rightarrow+\infty}\E_n(Q_n)=\lim_{n\rightarrow+\infty}\E_n(Q_n)<+\infty
,\qquad \displaystyle\sup_{n\in\mathbb{N}}\E_n(Q_n)<+\infty.
$$
As discussed earlier we have $Q_n\rightarrow Q$ strongly in
$L^2(\TT^3)$, and upon further extracting a subsequence we may
assume $Q_n\rightarrow Q$ a.e. in $\TT^3$. Hence together with the lower semicontinuity of $\G$ achieved in
Lemma \ref{lemma-lsc} it yields
\begin{equation}
\lim_{n\rightarrow+\infty}\|Q_n(\cdot)\|_{L^2(\TT^3)}^2=\|Q(\cdot)\|_{L^2(\TT^3)}^2,\qquad\liminf_{n\rightarrow+\infty}\G(Q_n)
\geq\G(Q).
\end{equation}
It remains to prove
\begin{equation}\label{Gamma-inf-1}
\liminf_{n\rightarrow+\infty}\int_{\TT^3}\psi_n(Q_n)\,\ud x\geq
\int_{\TT^3}\psi (Q)\,\ud x.
\end{equation}
To proceed, we denote
$$
  D:=\big\{x\in\TT^3: Q(x)\in D(\psi )\big\},
$$
and we distinguish between two cases.

\bigskip

\noindent{\em Case 1: $\mathcal{B}(Q)<+\infty$.} For any sufficiently small $\eps>0$ let us define
\begin{equation}\label{Omega-eps}
  \TT^3_\eps=D\cap\left\{x\in\TT^3:-\frac13+\eps\leq\lambda_i(Q(x))\leq\frac23-\eps,\;1\leq
  i\leq 3\right\}.
\end{equation}
Since $\big|\TT^3\setminus{D}\big|=0$ we get
\begin{align}
\int_{\TT^3}\big[\psi_n(Q_n)-\psi (Q)\big]\,\ud{x}
&=\int_{D}\big[\psi_n(Q_n)-\psi (Q)\big]\,\ud{x}\non\\
&=\int_{D\setminus\TT^3_\varepsilon}\big[\psi_n(Q_n)-\psi (Q)]\,\ud{x}
+\int_{\TT^3_\eps}[\psi_n(Q_n)-\psi (Q)\big]\,\ud{x}.
\end{align}
On the one hand, for any $n\geq 1$,
\begin{align*}
\int_{D\setminus\TT^3_\varepsilon}\psi_n(Q_n)\,\ud{x}&\geq
\lim_{\eps\to 0}(\inf_{n,P}\psi_n(P))\big|D\setminus \TT^3_\varepsilon
\big|=0,
\end{align*}
and since $\psi \in L^1(\TT^3)$, as $|D\setminus\TT^3_\varepsilon|\rightarrow 0$, we have
\begin{align*}
\lim_{\eps\to 0}\int_{D\setminus\TT^3_\varepsilon}\psi (Q)\,\ud{x}=0.\end{align*}
Hence for all $\delta>0$, there exists
$\eps_0=\eps_0(\delta)>0$, such that
\begin{equation}\label{delta-1}
\int_{D\setminus\TT^3_\varepsilon}\big[\psi_n(Q_n)-\psi (Q)\big]\,\ud{x}>-\dfrac{\delta}{2},
\quad \forall n\geq 1 \quad\mbox{whenever }\, \eps\leq\eps_0.
\end{equation}

On the other hand, it follows from
$(\textbf{M5})$ that $\psi_n(Q(x))\rightarrow\psi (Q(x))$,
$\psi_n'(Q(x))\rightarrow\psi '(Q(x))$ as $n\rightarrow+\infty$ in $L^\infty(\TT^3_\eps)$.
Thus it implies for any fixed $\eps \leq \eps_0$ it
holds
$$
 \lim_{n\rightarrow\infty}\int_{\TT^3_\eps}[\psi_n(Q_n)-\psi (Q)\big]\,\ud{x}=0.
$$
Specifically, for $\eps=\eps_0$ there exists $N_0\in\mathbb{N}$,
such that
\begin{equation}\label{delta-2}
\int_{\TT^3_{\eps_0}}[\psi_n(Q_n)-\psi (Q)\big]\,\ud{x}>-\dfrac{\delta}{2},\quad\forall
n\geq N_0
\end{equation}
Combining \eqref{delta-1} and \eqref{delta-2} we finish the proof of
\eqref{Gamma-inf-1} by the arbitrariness of $\delta$.

\bigskip

\noindent{\em Case 2: $\mathcal{B}(Q)=+\infty$.} In this case it suffices
to check
\begin{equation}\label{Gamma-inf-2}
\liminf_{n\rightarrow\infty}\int_{\TT^3}\psi_n(Q_n)\,\ud{x}=+\infty.
\end{equation}
If $|\TT^3\setminus{D}|>0$, since $Q_n\rightarrow Q$ a.e. it
follows from Egorov's theorem that there exists a set $F\subset(\TT^3\setminus{D})$,
$|F|>0$, such that $Q_n\rightarrow Q$ uniformly on $F$. Note that
$Q(x)\in\mathcal{Q}\setminus D(\psi )$, $\forall x\in F$.
Hence the uniform convergence of $Q_n$ to $Q$ on $F$ implies there
exists a sequence $\eps_n\searrow 0^+$, such that
$$
  \lambda_i(Q_n(x))\leq-\frac13+\eps_n \;\mbox{or }\;
  \lambda_i(Q_n(x))\geq\frac23-\eps_n, \quad\forall\,1\leq i\leq 3.
$$
Then Fatou's lemma and $(\textbf{M4})$ yields
$$
  \liminf_{n\rightarrow+\infty} \int_F \psi_n(Q_n)\,\ud{x} \geq  \int_F \liminf_{n\rightarrow+\infty}\psi_n(Q_n)\,\ud{x}=+\infty.
$$
Therefore, \eqref{Gamma-inf-2} is verified
in this case.

\smallskip

Alternatively if $|\TT^3\setminus{D}|=0$, then using similar
argument as in \emph{Case 1} we have
$$
 \int_{\TT^3}\psi_n(Q_n)\,\ud{x}=\int_D\psi_n(Q_n)\,\ud{x}=\int_{D\setminus\TT^3_{\eps}}\psi_n(Q_n)\,\ud{x}+\int_{\TT^3_{\eps}}\psi_n(Q_n)\,\ud{x},
$$
where $\TT^3_{\eps}$ is given in \eqref{Omega-eps}. On one hand,
\begin{equation}\label{Gamma-inf-2-1}
 \int_{D\setminus\TT^3_{\eps}} \psi_n(Q_n)\,\ud{x}\geq (\inf_{n,P}\psi_n(P))\big|\TT^3\setminus\TT^3_{\eps}\big|
 \geq-\frac{4\pi|\TT^3|^2}{e},\quad\forall\eps>0.
\end{equation}
On the other hand, since $\mathcal{B}(Q)=+\infty$ and
$\TT^3_{\eps}\nearrow D$, we infer that $\forall M>0$, there exists
$\eps_0>0$, such that
$$
  \int_{\TT^3_{\eps}}\psi (Q)\,\ud{x}>M+1+\frac{4\pi|\TT^3|^2}{e},\quad\forall\eps\leq\eps_0.
$$
Meanwhile, $(\textbf{M5})$ indicates that
$\psi_n(Q(x))\rightarrow\psi (Q(x))$,
$\psi_n'(Q(x))\rightarrow\psi '(Q(x))$ in
$L^\infty(\TT^3_{\eps_0})$, which gives
\begin{equation*}
\int_{\TT^3_{\eps_0}}\psi_n(Q_n)\,\ud{x}\rightarrow
\int_{\TT^3_{\eps_0}}\psi (Q)\,\ud{x}.
\end{equation*}
Thus there exists $N\in\mathbb{N}$, such that
\begin{equation}\label{Gamma-inf-2-2}
\int_{\TT^3_{\eps_0}}\psi_n(Q_n)\,\ud{x}>\int_{\TT^3_{\eps_0}}\psi (Q)\,\ud{x}-1>M+\frac{4\pi|\TT^3|^2}{e},
\quad\forall n\geq N.
\end{equation}
Putting together \eqref{Gamma-inf-2-1} and \eqref{Gamma-inf-2-2}, we
conclude that \eqref{Gamma-inf-2} is valid. Hence the proof is
complete.
\end{proof}

Since in Theorem \ref{main-theorem-1} we allow the initial data
$Q_0\in \overline{D(\E)}$, one may not expect any further regularity
at time $t=0$. Thus we only aim to prove the regularity of $Q(t,\cdot)$ for
positive times $t>0$.

\smallskip

Let $t_0>0$ be arbitrarily given. Consider the gradient flow
sequence
\begin{align}\label{grad flow N}
\begin{cases}
\partial_tQ_n\in -\partial\E_n(Q_n),\quad t\geq t_0,\\
Q_n(t_0, x)=Q(t_0, x)
\end{cases}
\end{align}
subject to periodic boundary conditions \eqref{BC-periodic}.
Here we let time start from $t_0$ only as a matter of
convenience, to avoid an (unnecessary) time shifting. Note that the
energies $\E_n$ are also $-2\alpha$-convex, proper, lower semicontinuous,
and uniformly bounded from below. Thus we can apply
\cite[Theorem~4.0.4]{AGS08} to get the same conclusions as in
Theorem \ref{main-theorem-1}, Proposition \ref{prop-energy-decay}
and Proposition \ref{prop-L2L2}.

\smallskip

As a consequence, we manage to prove

\begin{lemma}\label{lemma-grad} Let $Q_n$ be the solution of
\eqref{grad flow N}. Then for every $t_0\in (0, T)$
\begin{equation*}
\|\partial\E_n(Q_n(t,\cdot))\|_{L^2(\TT^3)}\rightarrow
\|\partial\E(Q(t,\cdot))\|_{L^2(\TT^3)} ,\quad
\|\partial_tQ_n(t,\cdot)\|_{L^2(\TT^3)}\rightarrow
\|\partial_tQ(t,\cdot)\|_{L^2(\TT^3)} \;\text{ in } L^2(t_0,T),
\end{equation*}
up to a subsequence.
\end{lemma}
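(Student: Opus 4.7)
The plan is to apply Serfaty's $\Gamma$-convergence theorem for gradient flows (Proposition \ref{theorem-serfaty}) with $E_n = \E_n$, $F = \E$, and reference convergence being strong $L^2(\TT^3;\mathcal{Q})$. The $\Gamma$-convergence of $\E_n$ to $\E$ is already supplied by Proposition \ref{proposition-gamma-convergence}. Each $\E_n$ is smooth, $-2\alpha$-convex, proper and uniformly bounded below by $(\textbf{M2})$, so all results of Section 3 apply to $Q_n$; in particular each $Q_n$ satisfies the energy identity
\begin{equation*}
\E_n(Q_n(t_0)) - \E_n(Q_n(t)) = \int_{t_0}^t \|\partial_t Q_n(s,\cdot)\|_{L^2}^2\, \ud s,
\end{equation*}
which is exactly the hypothesis required by Proposition \ref{theorem-serfaty}.

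Before invoking the theorem I would (i) verify that $\E_n(Q(t_0,\cdot)) \to \E(Q(t_0,\cdot))$ and (ii) produce a subsequence with $Q_n(t,\cdot) \to \tilde{Q}(t,\cdot)$ strongly in $L^2$ at every $t \in [t_0,T]$. Since $Q(t_0,\cdot) \in D(\E)$ by Theorem \ref{main-theorem-1} and $\psi_n \nearrow \psi$ pointwise in $D(\psi)$ by $(\textbf{M3})$--$(\textbf{M4})$, (i) follows from monotone convergence. For (ii), combining (i) with the energy identity gives a uniform $L^\infty(t_0,T;H^1)$ bound on $Q_n$ via the coercivity \eqref{coercivity}, together with a uniform $L^2(t_0,T;L^2)$ bound on $\partial_t Q_n$; Aubin--Lions compactness and a diagonal extraction then yield the desired subsequence.

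It remains to check the two Serfaty conditions. For \eqref{Serfaty-condition1-1}, the uniform $L^2$ bound on $\partial_t Q_n$ allows extraction of a weak limit $\partial_t Q_n \rightharpoonup \partial_t \tilde{Q}$ in $L^2(t_0,T;L^2)$, and weak lower semicontinuity gives the required inequality with $f\equiv 0$. For the slope condition \eqref{Serfaty-condition1-2}, at a fixed time where $\|\partial \E_n(Q_n)\|_{L^2}$ is bounded I would extract a weak $L^2$-limit $\xi$ of $\partial \E_n(Q_n)$ and pass to the limit in the subgradient inequality
\begin{equation*}
\E_n(R) \geq \E_n(Q_n) + \langle \partial \E_n(Q_n), R - Q_n \rangle - \alpha\|R - Q_n\|_{L^2}^2, \qquad R \in D(\E),
\end{equation*}
using the $\Gamma$-liminf inequality on the left-hand side and the strong--weak duality pairing on the right. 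This shows $\xi \in \partial \E(\tilde{Q})$, whence $\liminf\|\partial \E_n(Q_n)\|_{L^2}^2 \geq \|\xi\|_{L^2}^2 \geq \|\partial \E(\tilde{Q})\|_{L^2}^2$.

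Once Proposition \ref{theorem-serfaty} applies, $\tilde{Q}$ solves the gradient flow for $\E$ with initial datum $Q(t_0,\cdot)$, so uniqueness from Theorem \ref{main-theorem-1} forces $\tilde{Q} \equiv Q$ on $[t_0,T]$, and the two $L^2(t_0,T)$ convergence statements of the lemma are immediate from the $L^2$-convergence conclusion of Proposition \ref{theorem-serfaty}. The main obstacle is the slope condition near the contact set $\{Q(t,x)\in \partial \QQ\}$, where $\partial \E(\tilde{Q})$ carries the singular contribution $\psi'(\tilde{Q})$ while $\psi_n'(Q_n)$ is only smooth. The resolution is to first derive the passage to the limit in the subgradient inequality against test matrices $R$ strictly inside $D(\psi)$ with a uniform margin (in the spirit of the cutoff class $T_\eta(Q)$ used in Lemma \ref{lemma-subgradient}), then recover the general case by density and convexity.
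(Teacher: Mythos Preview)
Your proposal is correct and follows essentially the same route as the paper: verify the hypotheses of Serfaty's theorem (Proposition \ref{theorem-serfaty}) using the $\Gamma$-convergence of Proposition \ref{proposition-gamma-convergence}, the energy identity, uniform $L^\infty(t_0,T;H^1)$ and $L^2(t_0,T;L^2)$ bounds, Aubin--Lions compactness, and well-preparedness of the initial data via monotone convergence; then conclude by uniqueness.

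The only point where you diverge from the paper is the verification of the slope condition \eqref{Serfaty-condition1-2}. The paper dispatches this in one line by citing \cite[Proposition~13]{O05}, which states that the slope lower bound holds automatically (with $C=0$) whenever the $\E_n$ are uniformly $\lambda$-convex. You instead sketch a direct argument: extract a weak $L^2$-limit $\xi$ of $\partial\E_n(Q_n)$ and pass to the limit in the $(-2\alpha)$-subgradient inequality. That argument is correct and is, in effect, a reproof of the cited result. Note, however, that your final paragraph overcomplicates matters: for any fixed $R\in D(\E)$ one has $\E_n(R)\to\E(R)$ by monotone convergence (from $(\textbf{M3})$--$(\textbf{M4})$), $\liminf\E_n(Q_n)\ge\E(\tilde Q)$ by the $\Gamma$-liminf inequality, and $\langle\partial\E_n(Q_n),R-Q_n\rangle\to\langle\xi,R-\tilde Q\rangle$ by strong--weak pairing, so the limiting subgradient inequality $\E(R)\ge\E(\tilde Q)+\langle\xi,R-\tilde Q\rangle-\alpha\|R-\tilde Q\|^2$ holds for all $R\in D(\E)$ directly, with no need for a margin or a $T_\eta$-type cutoff.
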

\begin{proof}
By Proposition \ref{proposition-gamma-convergence}, we have
$\E_n\overset\Gamma\rightarrow\E$. We aim to show that we are under
the hypotheses of Proposition \ref{theorem-serfaty}, where the general sense of convergence is considered to be the strong convergence
in $L^2(\TT^3; \mathcal{Q})$.
Let us first check that the conditions \eqref{Serfaty-condition1-1}, \eqref{Serfaty-condition1-2} are valid for the solutions $Q_n$ of \eqref{grad flow
N}.

By Proposition \ref{prop-L2L2} and $(\textbf{M3})$ we have
$$
\E_n(Q_n)(t)\leq\E_n(Q(t_0))\leq\E(Q(t_0)),\quad \forall\, t>t_0,
n\in\mathbb{N}.
$$
Hence following a similar argument in the proof of compactness part in Proposition
\ref{proposition-gamma-convergence} we get
\begin{equation}
  \{Q_n\}\;\;\mbox{is uniformly bounded in } L^\infty(t_0, T; H^1(\TT^3)).
\end{equation}

Next, by Proposition \ref{prop-L2L2} and $(\textbf{M3})$, the energy equality
\begin{align}\label{energy-equality}
\int_{t_0}^T\big(\|\partial_tQ_n(t,\cdot)\|^2+\|\partial\E_n(Q_n(t,\cdot))\|^2\big)\,\ud{t}&=
2\E_n(Q(t_0))-2\E_n(Q_n(T))\non\\
&\leq 2\E(Q(t_0))-2\inf\E_n
\end{align}
holds for all $t_0<T<+\infty$. Hence
\begin{equation}
\{\partial_tQ_n\}\;\mbox{is uniformly bounded in}\; L^2(t_0, T;
L^2(\TT^3))
\end{equation}
and we can apply the Aubin-Lions lemma to yield
\begin{equation}\label{convergence-cite}
Q_n \rightarrow \tilde{Q}\;\;\mbox{strongly in } C\big([t_0, T];
L^2(\TT^3)\big),
\end{equation}
and \eqref{energy-equality} implies that
\begin{equation}
\partial_tQ_n\rightarrow \partial_t\tilde{Q}\;\;\mbox{weakly in }
L^2(0, T; L^2(\TT^3)).
\end{equation}
Hence the condition \eqref{Serfaty-condition1-1} is satisfied by taking $f=0$. Since each
$\E_n$ is $-2\alpha$ convex, by \cite[Proposition 13]{O05} we know that the condition
\eqref{Serfaty-condition1-2} is also satisfied by taking $C=0$.

\smallskip

Finally, note that the initial data are ``well-prepared" in the
sense of $\E_n(Q(t_0))\rightarrow \E(Q(t_0))$, thus all the
assumptions in Proposition \ref{theorem-serfaty} are satisfied, which in turn
gives that $\tilde{Q}$ is a solution of the gradient flow
$$
\begin{cases}
\partial_t\tilde{Q}\in-\partial\E(\tilde{Q}),\\
\tilde{Q}(t_0)=Q(t_0).
\end{cases}
$$
Since $Q$ is already a solution, and by Theorem \ref{main-theorem-1}
we know that such a solution is unique, we infer $\tilde{Q}=Q$, and
the proof is complete.

\end{proof}
%


Now we turn to  the  $H^2$-regularity of the sequence $\{Q_n\}$.  The
following technical implies the coercivity of $\partial\G$:
\begin{lemma}\label{lemma-H2}
For any $P\in H^2(\TT^3;\mathcal{Q})$ the operator $\partial\G(P)$ satisfies
\begin{equation*}
2(L_1-|L_2+L_3|) \|\Delta P\|^2_{L^2(\TT^3)} \leq  \langle
-\partial \G(P), \Delta P\rangle_{L^2}\leq
2(L_1+|L_2+L_3|)\|\Delta{P}\|_{L^2(\TT^3)}^2.
\end{equation*}
\end{lemma}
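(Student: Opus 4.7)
The plan is to compute $\langle -\partial\G(P),\Delta P\rangle_{L^2}$ directly from the explicit formula for $\partial\G$ derived in Lemma \ref{lemma-subgradient}, and then to estimate each piece on its own. Recall that
\[
-\partial\G(P)_{ij}=2L_1\Delta P_{ij}+2(L_2+L_3)\,\partial_k\partial_j P_{ik}-\tfrac{2(L_2+L_3)}{3}\,\partial_\ell\partial_k P_{k\ell}\,\delta_{ij}.
\]
Taking the $L^2$ inner product with $\Delta P_{ij}$ and summing over $i,j$, the last term drops out immediately: since $P\in\mathcal{Q}$ is traceless, $\Delta P_{ii}=\Delta(\tr P)=0$, so the contraction against $\delta_{ij}$ vanishes. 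The first term contributes $2L_1\|\Delta P\|^2_{L^2}$.

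The main task is therefore to understand the anisotropic cross term. Set $A_i:=\partial_k P_{ik}$; using the symmetry of $P$, also $A_i=\partial_j P_{ji}$. Integrating by parts once in $\partial_j$ on the periodic torus (no boundary contribution) I would rewrite
\[
\int_{\TT^3}\partial_k\partial_j P_{ik}\,\Delta P_{ij}\,\ud x
=-\int_{\TT^3}A_i\,\Delta(\partial_j P_{ij})\,\ud x
=-\int_{\TT^3}A_i\,\Delta A_i\,\ud x
=\|\nabla A\|^2_{L^2},
\]
where the last equality is a further integration by parts. Thus
\[
\langle-\partial\G(P),\Delta P\rangle_{L^2}=2L_1\|\Delta P\|^2_{L^2}+2(L_2+L_3)\|\nabla A\|^2_{L^2},
\]
which is a clean identity from which both bounds will follow once $\|\nabla A\|_{L^2}$ is compared to $\|\Delta P\|_{L^2}$.

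The key pointwise estimate is $\|\nabla A\|_{L^2}\le\|\Delta P\|_{L^2}$. The cleanest justification I would give is via Plancherel on $\TT^3$: writing $P$ in Fourier series, $\partial_j A_i$ corresponds to $-4\pi^2\xi_j\sum_k\xi_k\widehat{P}_{ik}(\xi)$, while $\Delta P_{ij}$ corresponds to $-4\pi^2|\xi|^2\widehat{P}_{ij}(\xi)$. A single Cauchy--Schwarz in the index $k$,
\[
\Bigl|\sum_k\xi_k\widehat{P}_{ik}(\xi)\Bigr|^2\le|\xi|^2\sum_k|\widehat{P}_{ik}(\xi)|^2,
\]
summed over $i$ and $\xi$ gives $\|\nabla A\|^2_{L^2}\le\|\Delta P\|^2_{L^2}$. (Equivalently, one can observe that $\|\nabla A\|^2_{L^2}$ is the squared $L^2$ norm of $\nabla\div P$, which is dominated by $\|\nabla^2 P\|^2_{L^2}=\|\Delta P\|^2_{L^2}$ on the torus by the usual integration by parts identity for second derivatives with periodic boundary conditions.)

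With the identity and the comparison in hand, the announced bounds are immediate: for the lower bound, whether $L_2+L_3$ is positive or negative,
\[
2L_1\|\Delta P\|^2+2(L_2+L_3)\|\nabla A\|^2\ge 2L_1\|\Delta P\|^2-2|L_2+L_3|\,\|\nabla A\|^2\ge 2(L_1-|L_2+L_3|)\|\Delta P\|^2,
\]
and analogously the upper bound follows by replacing the minus sign with a plus. The only non-routine point is the Fourier/Cauchy--Schwarz bound on $\|\nabla A\|_{L^2}$; everything else is integration by parts together with the traceless constraint.
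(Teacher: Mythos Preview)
Your argument is correct and follows the same overall strategy as the paper: expand $\langle-\partial\G(P),\Delta P\rangle$, drop the trace term, and control the anisotropic cross term by integration by parts together with a Cauchy--Schwarz step and the torus identity $\|\nabla^2 P\|_{L^2}=\|\Delta P\|_{L^2}$. The one genuine difference is in how the cross term is handled. The paper rewrites it as $\int_{\TT^3}\partial_\ell\partial_j P_{ik}\,\partial_\ell\partial_k P_{ij}\,\ud x$ and bounds this pointwise by $|\nabla^2 P|^2$, whereas you go one integration by parts further and recognize the same quantity as the perfect square $\|\nabla\div P\|_{L^2}^2$. Your identity $\langle-\partial\G(P),\Delta P\rangle=2L_1\|\Delta P\|^2+2(L_2+L_3)\|\nabla\div P\|^2$ is slightly sharper structurally: it shows the cross term is nonnegative, so one of the two claimed inequalities is in fact trivial (which one depends on the sign of $L_2+L_3$), and only the other actually needs the comparison $\|\nabla\div P\|_{L^2}\le\|\Delta P\|_{L^2}$. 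The Fourier argument you give for this last inequality is equivalent to the paper's pointwise Cauchy--Schwarz combined with $\int|\nabla^2 P|^2=\int|\Delta P|^2$; either route is standard on $\TT^3$.
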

\begin{proof}
Direct
computations yield
\begin{align*}
\langle -\partial\G(P), \Delta P\rangle_{L^2}
&=\int_{\TT^3}\bigg[2L_1\Delta{P}_{ij}+2(L_2+L_3)\partial_k\partial_jP_{ik}-\frac{2}{3}(L_2+L_3)\partial_\ell\partial_kP_{k\ell}\delta_{ij}
\bigg]\Delta{P}_{ij}\,\ud{x}\\
&=2L_1\|\Delta P\|^2_{L^2(\TT^3)}+2(L_2+L_3)\int_{\TT^3}\partial_j\partial_kP_{ik}\Delta P_{ij}\,\ud{x},
\end{align*}
where the last term of R.H.S. can be treated by integration by parts and the H\"{o}lder's inequality:
\begin{align*}
2(L_2+L_3)\int_{\TT^3}\partial_k\partial_jP_{ik}\Delta P_{ij}\,\ud{x}&=2(L_2+L_3)\int_{\TT^3}\partial_{\ell}\partial_jP_{ik}\partial_\ell\partial_kP_{ij}\,\ud{x}\\
&\leq 2|L_2+L_3|\int_{\TT^3}\sqrt{\displaystyle\sum_{i,j,k,\ell}(\partial_\ell\partial_jP_{ik})^2}\sqrt{\displaystyle\sum_{i,j,k,\ell}(\partial_\ell\partial_kP_{ij})^2}\,\ud{x}\\
&=2|L_2+L_3|\int_{\TT^3}|\nabla^2P|^2\,\ud{x}=2(|L_2+L_3|) \|\Delta P\|^2_{L^2(\TT^3)}.
\end{align*}
\end{proof}

Next, we recall the notion of angles between two elements in a Hilbert space $(H, \langle, \rangle)$. Given two nonzero elements
$u, v\in H$, the angle between $u$ and $v$ is the unique value
$$
  \angle (u, v) := \arccos\frac{\langle u, v\rangle}{\|u\|\|v\|}\in [0,\pi], \qquad\text{where}\quad\|u\|_{H}=\sqrt{\langle u, u\rangle}.
$$
Then the following triangle inequality is valid.
\begin{lemma}\label{lemma-triangle-inequality}
For any three unit vectors $\vec{u}, \vec{v}, \vec{w}\in\mathbb{S}^2$, it holds
\begin{equation}\label{triangle-inequality-2}
\angle (\vec{u}, \vec{v})\leq \angle (\vec{u}, \vec{w})+\angle (\vec{v}, \vec{w}),
\end{equation}
where $\angle (\vec{u}, \vec{v})\in [0, \pi]$ stands for the angle between the two vectors $\vec{u}$ and $\vec{v}$.
\end{lemma}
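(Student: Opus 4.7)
The plan is to reduce the spherical inequality to the ordinary Cauchy--Schwarz inequality in $\mathbb{R}^3$ by orthogonally decomposing $\vec{u}$ and $\vec{v}$ relative to $\vec{w}$. Set $\alpha=\angle(\vec{u},\vec{w})$, $\beta=\angle(\vec{v},\vec{w})$, $\gamma=\angle(\vec{u},\vec{v})$, all belonging to $[0,\pi]$. I first dispose of the trivial case $\alpha+\beta\geq\pi$: since $\gamma\leq\pi\leq\alpha+\beta$, \eqref{triangle-inequality-2} holds automatically. Hence I may assume $\alpha+\beta<\pi$.

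On the range $[0,\pi]$ the map $\cos$ is strictly decreasing, so \eqref{triangle-inequality-2} is equivalent to $\cos\gamma\geq\cos(\alpha+\beta)$. Using the cosine addition formula together with $\cos\alpha=\vec{u}\cdot\vec{w}$, $\cos\beta=\vec{v}\cdot\vec{w}$, $\cos\gamma=\vec{u}\cdot\vec{v}$, and the fact that $\sin\alpha,\sin\beta\geq 0$ on $[0,\pi]$, this reduces to verifying
$$
\vec{u}\cdot\vec{v}-(\vec{u}\cdot\vec{w})(\vec{v}\cdot\vec{w})\geq -\sqrt{1-(\vec{u}\cdot\vec{w})^2}\,\sqrt{1-(\vec{v}\cdot\vec{w})^2}.
$$

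The key step is then to decompose
$$
\vec{u}=(\vec{u}\cdot\vec{w})\vec{w}+\vec{u}^{\perp},\qquad \vec{v}=(\vec{v}\cdot\vec{w})\vec{w}+\vec{v}^{\perp},
$$
with $\vec{u}^{\perp},\vec{v}^{\perp}\perp\vec{w}$. A direct expansion yields
$\vec{u}\cdot\vec{v}-(\vec{u}\cdot\vec{w})(\vec{v}\cdot\vec{w})=\vec{u}^{\perp}\cdot\vec{v}^{\perp}$, while $|\vec{u}^{\perp}|=\sqrt{1-(\vec{u}\cdot\vec{w})^2}=\sin\alpha$ and $|\vec{v}^{\perp}|=\sin\beta$ by Pythagoras. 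The desired estimate is therefore $\vec{u}^{\perp}\cdot\vec{v}^{\perp}\geq -|\vec{u}^{\perp}|\,|\vec{v}^{\perp}|$, which is nothing but the Cauchy--Schwarz inequality in $\mathbb{R}^3$.

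There is no substantive obstacle here; the statement is the standard spherical triangle inequality and the only care needed is the preliminary reduction to $\alpha+\beta<\pi$ so that the monotonicity of $\cos$ on $[0,\pi]$ can be invoked to pass from the cosine form of the inequality back to the angular form \eqref{triangle-inequality-2}.
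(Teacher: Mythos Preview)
Your proof is correct and follows a genuinely different route from the paper's. The paper fixes coordinates so that $\vec{u}=(1,0,0)$ and $\vec{v}=(a,\sqrt{1-a^2},0)$, then computes all cosines and sines explicitly and reduces the cosine inequality $\cos\gamma\geq\cos(\alpha+\beta)$ to the algebraic fact $(1-c_1^2-c_2^2)(1-a^2)\geq 0$. You instead work coordinate-free: decomposing $\vec{u}$ and $\vec{v}$ orthogonally relative to $\vec{w}$ identifies the left-hand side of the cosine inequality as $\vec{u}^\perp\cdot\vec{v}^\perp$ and the right-hand side as $-|\vec{u}^\perp|\,|\vec{v}^\perp|$, whence the result is literally Cauchy--Schwarz. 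Two advantages of your approach are worth noting. First, you explicitly dispose of the case $\alpha+\beta\geq\pi$ before invoking the monotonicity of $\cos$ on $[0,\pi]$; the paper asserts the equivalence $\gamma\leq\alpha+\beta\Leftrightarrow\cos\gamma\geq\cos(\alpha+\beta)$ without this caveat, which is strictly speaking only valid once $\alpha+\beta\leq\pi$ is known. Second, your argument uses nothing beyond the inner-product structure and therefore holds verbatim in any Hilbert space, whereas the paper's coordinate computation is specific to $\mathbb{R}^3$; this is relevant because the lemma is subsequently applied (Proposition~\ref{lemma-H2-2}) to angles between elements of $L^2(\TT^3;\mathcal{Q})$, so your version applies there directly without an additional reduction to a three-dimensional subspace.
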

\begin{proof}
By rotation invariance, we assume the unit vectors $\vec{u}$, $\vec{v}$ lie on the $xy$-plane. And in particular, w.l.o.g. we suppose
$\vec{u}$ points in the direction of $x$-axis:
$$
  \vec{u}=\langle 1, 0, 0 \rangle, \quad \vec{v}=\langle a, \sqrt{1-a^2}, 0 \rangle, \quad \vec{w}=\langle c_1, c_2, c_3 \rangle \in\mathbb{S}^2, \quad
  -1\leq a\leq 1.
$$
Correspondingly, we have
\begin{align*}
\cos\angle(\vec{u}, \vec{v})&=a, \quad \cos\angle(\vec{u}, \vec{w})=c_1,\quad \cos\angle(\vec{v}, \vec{w})=ac_1+\sqrt{1-a^2}c_2, \\
\sin\angle(\vec{u}, \vec{w})&=\sqrt{1-c_1^2},\quad \sin\angle(\vec{v}, \vec{w})=\sqrt{1-2a\sqrt{1-a^2}c_1c_2-a^2c_1^2-(1-a^2)c_2^2}.
\end{align*}
To prove \eqref{triangle-inequality-2}, it is equivalent to show
$$
  \cos\angle(\vec{u}, \vec{v})\geq\cos\big(\angle(\vec{u}, \vec{w})+\angle(\vec{v}, \vec{w})\big).
$$
That is,
$$
  a\geq c_1(ac_1+\sqrt{1-a^2}c_2)-\sqrt{1-c_1^2}\sqrt{1-2a\sqrt{1-a^2}c_1c_2-a^2c_1^2-(1-a^2)c_2^2},
$$
which is equivalent to
\begin{equation}\label{triangle-inequality-3}
  \sqrt{1-c_1^2}\sqrt{1-2a\sqrt{1-a^2}c_1c_2-a^2c_1^2-(1-a^2)c_2^2}\geq a(c_1^2-1)+\sqrt{1-a^2}c_1c_2.
\end{equation}
If the R.H.S. of \eqref{triangle-inequality-3} is non-positive, the proof is complete. Otherwise, it is equivalent to prove
\begin{align*}
&\qquad(1-c_1^2)\big[1-2a\sqrt{1-a^2}c_1c_2-a^2c_1^2-(1-a^2)c_2^2\big]\\
&\qquad\qquad\geq a^2(1-c_1^2)^2+(1-a^2)c_1^2c_2^2-2a\sqrt{1-a^2}(1-c_1^2)c_1c_2\\
&\Leftrightarrow \quad (1-c_1^2)(1-a^2c_1^2)-(1-a^2)c_2^2\geq a^2(1-c_1^2)^2 \\
&\Leftrightarrow \quad (1-c_1^2-c_2^2)(1-a^2)\geq 0,
\end{align*}
which is obviously true.
\end{proof}

Based on Lemma \ref{lemma-grad} and Lemma \ref{lemma-H2}, we show
that the $H^2$-norm of $Q_n$ can be estimated in terms of
$\|\partial\E_n(Q_n(t))\|$ uniformly in $n\in\mathbb{N}$.

\begin{proposition}\label{lemma-H2-2}
For any $n\in\mathbb{N}$, a.e. $t\in(t_0, T)$, it holds
\begin{align*}
\big\|\Delta Q_n(t,\cdot)\big\|_{L^2(\TT^3)}\leq
C_L\big(\big\|\partial\E_n(Q_n(t,\cdot))\big\|_{L^2(\TT^3)}+2\alpha\big\|Q_n(t,\cdot)\big\|_{L^2(\TT^3)}\big),
\end{align*}
where $C_L$ is defined in \eqref{C-L}.
\end{proposition}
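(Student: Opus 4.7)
The plan is to test the identity $\partial\G(Q_n)=\partial\E_n(Q_n)-V_n+2\alpha Q_n$ — coming from the smooth analogue of Lemma \ref{lemma-subgradient} applied to $\E_n$, with $V_n:=\psi_n'(Q_n)-\frac{1}{3}\mathrm{tr}(\psi_n'(Q_n))\Id$ being a classical gradient since $\psi_n$ is $C^1$ and convex by property $(\mathbf{M1})$ — against $-\Delta Q_n$ in $L^2(\TT^3)$, and to combine the coercivity of $\partial\G$ from Lemma \ref{lemma-H2} with the monotonicity supplied by the convexity of $\psi_n$. Two Cauchy--Schwarz applications then close the estimate.

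For the three contributions on the right-hand side of that identity, Lemma \ref{lemma-H2} directly yields $\langle\partial\G(Q_n),-\Delta Q_n\rangle_{L^2} \geq 2(L_1-|L_2+L_3|)\|\Delta Q_n\|_{L^2}^2$. After one integration by parts — using that $\Delta Q_n$ is trace-free to kill the $\mathrm{tr}(\psi_n'(Q_n))$ correction — the $V_n$-term becomes
\begin{equation*}
\langle V_n,-\Delta Q_n\rangle_{L^2} = -\int_{\TT^3}\psi_n'(Q_n):\Delta Q_n\,\ud x = \int_{\TT^3}\partial_k\psi_n'(Q_n)_{ij}\,\partial_k Q_{n,ij}\,\ud x \geq 0,
\end{equation*}
where the non-negativity is exactly the convexity (positive semidefiniteness of the Hessian of $\psi_n$ on symmetric trace-free matrices); this term is therefore discarded with its favourable sign. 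For the linear piece, $\langle 2\alpha Q_n,-\Delta Q_n\rangle_{L^2} = 2\alpha\|\nabla Q_n\|_{L^2}^2 \leq 2\alpha\|Q_n\|_{L^2}\|\Delta Q_n\|_{L^2}$ by an integration by parts and Cauchy--Schwarz. Assembling these and bounding the $\partial\E_n$ contribution by Cauchy--Schwarz,
\begin{equation*}
2(L_1-|L_2+L_3|)\|\Delta Q_n\|_{L^2}^2 \leq \bigl(\|\partial\E_n(Q_n)\|_{L^2}+2\alpha\|Q_n\|_{L^2}\bigr)\|\Delta Q_n\|_{L^2},
\end{equation*}
which upon division by $\|\Delta Q_n\|_{L^2}$ gives the claim with the cleaner constant $[2(L_1-|L_2+L_3|)]^{-1}$. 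This is in turn majorised by $C_L$ of \eqref{C-L} since the extra radical factor there is always at least $1$, so the stated inequality follows.

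The main obstacle is the a priori $H^2$-regularity required even to write down $\Delta Q_n$. The $\E_n$-analogue of Proposition \ref{prop-L2L2} guarantees $\partial\E_n(Q_n(t,\cdot))\in L^2(\TT^3)$ for a.e.\ $t$; property $(\mathbf{M6})$ together with the uniform $H^1$-bound on $Q_n$ established in the proof of Lemma \ref{lemma-grad} gives $V_n\in L^2$; and $Q_n\in L^2$ is trivial. Hence $\partial\G(Q_n)\in L^2$, and a standard Fourier-multiplier argument on $\TT^3$ (in which Lemma \ref{lemma-H2} plays the role of ellipticity) upgrades this to $\Delta Q_n\in L^2$, so that $Q_n(t,\cdot)\in H^2(\TT^3)$ for a.e.\ $t$. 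Crucially none of the constants produced in this regularity step depend on $n$, which is essential for the subsequent passage $n\to\infty$ in the proof of Theorem \ref{main-theorem-2}.
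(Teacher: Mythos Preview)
Your proof is correct and actually cleaner than the paper's. Both arguments share the same regularity preamble (using $(\mathbf{M6})$ and the $H^1$-bound to force $\partial\G(Q_n)\in L^2$, then Lemma~\ref{lemma-H2} as ellipticity to get $Q_n\in H^2$), but they diverge in the core estimate. The paper does \emph{not} test against $-\Delta Q_n$; instead it expands $\|\partial\G(Q_n)+\partial\psi_n(Q_n)\|^2$ and controls the cross term $\langle\partial\G(Q_n),\partial\psi_n(Q_n)\rangle$ from below via a Hilbert-space angle argument: Lemma~\ref{lemma-H2} bounds the angle between $\partial\G(Q_n)$ and $-\Delta Q_n$, convexity bounds the angle between $\partial\psi_n(Q_n)$ and $-\Delta Q_n$ by $\pi/2$, and a triangle inequality for angles (Lemma~\ref{lemma-triangle-inequality}) combines these. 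This is what generates the extra radical factor in $C_L$.

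Your route---moving $V_n$ to the left and testing against $-\Delta Q_n$ so that convexity of $\psi_n$ makes the $V_n$-contribution sign-definite and disposable---bypasses the angle machinery entirely and delivers the sharper constant $[2(L_1-|L_2+L_3|)]^{-1}\leq C_L$. In fact this is essentially the same manoeuvre the paper itself uses later, in Part~2 of the proof of Theorem~\ref{main-theorem-2} (see \eqref{ineq}--\eqref{ineq-2}), when establishing the decay of $\|\nabla Q\|_{L^2}$. So your approach is more elementary, yields a better constant, and aligns with a computation the paper already performs elsewhere; the paper's angle argument buys nothing extra for the stated proposition.
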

\begin{proof}
By Lemma \ref{lemma-grad}, we have that (up to a subsequence) $\|\partial{\E}_n(Q_n(t,\cdot))\|$
is convergent to $\|\partial\E(Q(t,\cdot))\|$ for almost every fixed
$t\in [t_0, T]$.
Hence for any fixed $t\in [t_0, T]$ (after
removing a set of measure zero), for any $n\in\mathbb{N}$ it holds
Recall \eqref{energy-sequence}, we have for every  $n\geq 1$ and almost every   $t\in (t_0,T)$ that
\begin{align}\label{bound-a.e.}
&\Big\|\partial\G(Q_n(t,\cdot))+\psi_n'(Q_n(t,\cdot))-\frac13\tr(\psi_n')(Q_n(t,\cdot))\mathbb{I}_3\Big\|_{L^2(\TT^3)}\non\\
&\leq\big\|\partial\E_n(Q_n(t,\cdot))\big\|_{L^2(\TT^3)}
+2\alpha\big\|Q_n(t,\cdot)\big\|_{L^2(\TT^3)}.\end{align}
 On the other hand,  by \noindent$(\textbf{M6})$, and $\{Q_n\}\in L^\infty(0, T; H^1(\TT^3))$, we know that
$$
 \Big\|\psi_n'(Q_n(t,\cdot))-\frac13\tr(\psi_n')(Q_n(t,\cdot))\mathbb{I}_3\Big\|_{L^2(\TT^3)}\leq \lambda_n\|Q_n(t,\cdot)\|_{L^2(\TT^3)}+\Lambda_n|\TT^3|<+\infty
$$
which together with \eqref{bound-a.e.} implies that
$$
\big\|\partial\G(Q_n(t,\cdot))\big\|_{L^2(\TT^3)}<\infty.
$$
By lemma \ref{lemma-H2}, we conclude that
$Q_n(t,\cdot)\in H^2(\TT^3)$, a.e. $t\geq 0$ for any $n\in\mathbb{N}$.

\smallskip

In the rest of this lemma, for simplicity $Q_n(\cdot, t)$ is abbreviated by $Q_n$, and w.l.o.g. we assume $\|\Delta{Q}_n\|_{L^2(\TT^3)}>0$. By \eqref{coefficient-assumption} and Lemma \ref{lemma-H2},
we have
\begin{align*}
\frac{\int_{\TT^3}\(\partial\G(Q_n): -\Delta{Q}_n\)\,dx}{\|\partial\G(Q_n)\|_{L^2}\|\Delta{Q}_n\|_{L^2}}
\geq\frac{2(L_1-|L_2+L_3|)\|\Delta{Q}_n\|_{L^2}^2}{2(L_1+|L_2+L_3|)\|\Delta{Q}_n\|_{L^2}^2}
\geq \frac{L_1-|L_2+L_3|}{L_1+|L_2+L_3|}>0.
\end{align*}
Hence the angle between $\partial\G(Q_n)$ and $-\Delta{Q}_n$ is bounded by
$$
 \angle\big(\partial\G(Q_n),-\Delta{Q}_n\big)\leq\arccos\frac{L_1-|L_2+L_3|}{L_1+|L_2+L_3|}.
$$
Meanwhile, by \noindent$(\textbf{M1})$ we know that
$$
  \int_{\TT^3}\(\partial\psi_n(Q_n): -\Delta{Q}_n\)\,dx\geq 0,
$$
which together with Lemma \ref{lemma-triangle-inequality} gives
\begin{align*}
\angle\big(\partial\G(Q_n),\partial\psi_n(Q_n)\big)&\leq\angle\big(\partial\G(Q_n),-\Delta{Q}_n\big)+\angle\big(\partial\psi_n(Q_n),-\Delta{Q}_n\big)\\
&\leq\frac{\pi}{2}+\arccos\frac{L_1-|L_2+L_3|}{L_1+|L_2+L_3|}.
\end{align*}
As a consequence, we obtain that
\begin{align}\label{trigonometry-inequlity}
&\int_{\TT^3}\(\partial\G(Q_n):\partial\psi_n(Q_n)\)\,dx\\
&\geq\cos\Big(\frac{\pi}{2}+\arccos\frac{L_1-|L_2+L_3|}{L_1+|L_2+L_3|}\Big)\|\partial\G(Q_n)\|_{L^2(\TT^3)}\|\partial\psi_n(Q_n)\|_{L^2(\TT^3)}\nonumber\\
&=-\frac{2\sqrt{L_1|L_2+L_3|}}{L_1+|L_2+L_3|}\|\partial\G(Q_n)\|_{L^2(\TT^3)}\|\partial\psi_n(Q_n)\|_{L^2(\TT^3)}.
\end{align}
Note that $2\sqrt{L_1|L_2+L_3|}/(L_1+|L_2+L_3|)<1$ by \eqref{coefficient-assumption}.

In all, after combining Lemma \ref{lemma-H2}, \eqref{bound-a.e.}, \eqref{trigonometry-inequlity} and the Cauchy-Schwarz inequality we conclude that
\begin{align}
&\big(\|\partial\E_n(Q_n)\|_{L^2(\TT^3)}+2\alpha\|Q_n\|_{L^2(\TT^3)}\big)^2\non\\
&\geq \|\partial\G(Q_n)\|_{L^2(\TT^3)}^2+\|\partial\psi_n(Q_n)\|_{L^2(\TT^3)}^2-\frac{4\sqrt{L_1|L_2+L_3|}}{L_1+|L_2+L_3|}\|\partial\G(Q_n)\|_{L^2(\TT^3)}\|\partial\psi_n(Q_n)\|_{L^2(\TT^3)}\non\\
&\geq\frac{L_1+|L_2+L_3|-2\sqrt{L_1|L_2+L_3|}}{L_1+|L_2+L_3|}\|\partial\G(Q_n)\|_{L^2(\TT^3)}^2\non\\
&\geq\frac{L_1+|L_2+L_3|-2\sqrt{L_1|L_2+L_3|}}{L_1+|L_2+L_3|}4(L_1-|L_2+L_3|)^2\|\Delta{Q}_n\|_{L^2(\TT^3)}^2
\end{align}
The proof is complete.
\end{proof}

Summing up the results established in this subsection, we are
ready to establish the improved regularity properties of the unique
solution $Q$ obtained in Theorem \ref{main-theorem-1}.

\begin{proof}[Proof of Theorem \ref{main-theorem-2}]

$\newline$

\noindent\textbf{Part 1: uniform-in-time bound \eqref{uniform-H2-bound}}
It is proved in \eqref{convergence-cite} that (up to a
subsequence) $Q_n(t)\rightarrow Q(t,\cdot)$ in $C\big([t_0, T];
L^2(\TT^3)\big)$. Moreover, it follows from Proposition \ref{lemma-H2-2}
that for a.e. $t\in (t_0,T)$, $\|\Delta Q_n(t)\|_{L^2(\TT^3)}$ is (up to a subsequence)
uniformly bounded in $n\in\mathbb{N}$. Hence $\Delta Q(t,\cdot)\in
L^2(\TT^3)$ a.e., and from Lemma \ref{lemma-grad} we can further make
the following estimates
\begin{align}
\|\Delta Q(t,\cdot)\|_{L^2(\TT^3)}&\leq \liminf_{n\rightarrow\infty}\|\Delta Q_n(t,\cdot)\|_{L^2(\TT^3)}\nonumber\\
&\leq\liminf_{n\rightarrow\infty}C_L\big(\|\partial\E_n(Q(t,\cdot))\|_{L^2(\TT^3)}+2\alpha\|Q_n(t,\cdot)\|_{L^2(\TT^3)}\big)\nonumber\\
&=C_L\big(\|\partial\E(Q(t,\cdot))\|_{L^2(\TT^3)}+2\alpha\|Q(t,\cdot)\|_{L^2(\TT^3)}\big), \label{H2-t}
\end{align}
where $C_L$ is defined in \eqref{C-L}.

W.l.o.g. we assume $T=t_0+100$. In view of Lemma \eqref{H2-t}, it suffices to provide
the  $L^\infty$-bound for $\|\partial\E(Q(t,\cdot))\|_{L^2(\TT^3)}^2$.
By equation \eqref{energy identity} we have
\begin{equation}
\int_{t_0}^{+\infty}\|\partial\E(Q(t,\cdot))\|_{L^2(\TT^3)}^2\,\ud{t}
\leq \E(Q(t_0))-\inf\E.
\end{equation}
 Thus for any $n\in\mathbb{N}$ it holds
\begin{equation}
\int_{t_0+n}^{t_0+n+1}\|\partial\E(Q(t,\cdot))\|_{L^2(\TT^3)}^2\,\ud{t}
\leq\E(Q(t_0))-\inf\E,
\end{equation}
 hence there exists a set of positive measure $A_n \subset [t_0+n,t_0+n+1]$ such that
\begin{equation}\label{energy-estimate}
\|\partial\E(Q(t,\cdot))\|_{L^2(\TT^3)}^2 \leq
\E(Q(t_0))-\inf\E+1,\quad\forall\,t\in A_n.
\end{equation}
For any $n\in\mathbb{N}$, $n\leq 99$, let us choose a time $s_n\in A_n$, and for
the sake of convenience we set $s_0=t_0$. Obviously $\{s_n\}_{n\leq 99}$ is monotone increasing, and $s_{n+1}-s_n\leq 2$.

\smallskip

Let us then consider the gradient flow sequence
\begin{equation}
\begin{cases}
\partial_tP_n\in -\partial\E(P_n),\\
P_n(0,\cdot)=Q(s_n,\cdot),
\end{cases}
\end{equation}
subject to periodic boundary condition, whose solution $P_n$ is given by the time shift $P_n(t,\cdot):=Q(s_n+t,\cdot)$,
$\forall\,t\geq 0$. By Proposition \ref{prop-energy-decay}, the
function $t\mapsto e^{-2\alpha{t}}\|\partial\E(P_n(t))\|_{L^2(\TT^3)}$ is
nonincreasing, thus together with \eqref{energy-estimate} we have
\begin{align*}
\|\partial\E(Q(t+s_n,\cdot))\|_{L^2(\TT^3)}^2
&=\|\partial\E(P_n(t,\cdot))\|_{L^2(\TT^3)}^2\leq e^{4\alpha(s_{n+1}-s_n)}\|\partial\E(P_n(0,\cdot))\|_{L^2(\TT^3)}^2\\
&\leq e^{8\alpha}\|\partial\E(Q(s_n,\cdot))\|_{L^2(\TT^3)}^2\leq e^{8\alpha}(\E(Q(t_0))-\inf\E+1),
\end{align*}
for a.e. $t\in
(0,s_{n+1}-s_n)$. Repeating this argument for all $n$ finally gives
$$
\|\partial\E(Q(t,\cdot))\|_{L^2(\TT^3)}\leq
e^{4\alpha}\sqrt{\E(Q(t_0))-\inf\E+1}, \qquad\mbox{for a.e. } t\in (t_0, t_0+99).
$$
In addition, we recall that all eigenvalues of $Q(t,\cdot)$ are in
$(-1/3,2/3)$ for a.e. $t\in (t_0, t_0+99)$, hence $\|Q(t,\cdot)\|_{H^2(\TT^3)}$ is
uniform-in-time bounded in $(t_0, t_0+99)$. Thus \eqref{uniform-H2-bound} could be proved by iteration.

\bigskip

\noindent\textbf{Part 2: strict physicality \eqref{uniform-physicality}}.
Now that $Q$ is a solution of
$$
 \partial_tQ\in -\partial\G(Q(t,\cdot))-\psi '(Q(t,\cdot))+\frac{\mathrm{tr}(\psi '(Q(t,\cdot)))}{3}\mathbb{I}_3
+2\alpha{Q}(t,\cdot) \qquad\forall\, t >t_0,
$$
with $Q\in L^\infty(t_0, +\infty;
H^2)$, let us take the inner
product with $-\Delta Q(t,\cdot)$. Then it gives
\begin{align}\label{ineq}
\dfrac{1}{2}\dfrac{d}{d{t}}\|\nabla{Q}(t,\cdot)\|^2_{L^2(\TT^3)}
&=\langle\partial\G(Q(t,\cdot)),\Delta{Q}(t,\cdot)\rangle_{L^2(\TT^3)}+\langle\psi '(Q(t,\cdot)),\Delta{Q}(t,\cdot)\rangle_{L^2(\TT^3)}\non\\
&\qquad+2\alpha\|\nabla{Q}(t,\cdot)\|^2_{L^2(\TT^3)}.
\end{align}

Note that by \eqref{coefficient-assumption-3}, Lemma \ref{lemma-H2},
and the Poincar\'e inequality, we have
\begin{align}\label{ineq-1}
\langle\partial\G(Q(t,\cdot)), \Delta{Q}(t)\rangle_{L^2(\TT^3)}
&\leq-2(L_1-|L_2+L_3|)\|\Delta{Q}(t,\cdot)\|_{L^2(\TT^3)}^2 \non\\
&\leq-\dfrac{2(L_1-|L_2+L_3|)}{C_{\TT^3}^2}\|\nabla{Q}(t,\cdot)\|_{L^2(\TT^3)}^2
\end{align}
On the other hand,
\begin{align}\label{ineq-2}
\langle \psi '(Q(t,\cdot)),
\Delta{Q}(t,\cdot)\rangle_{L^2(\TT^3)}&=\int_{\TT^3}\psi '(Q(t,x))\Delta{Q}(t,x)\,\ud{x}\non\\
&=-\int_{\TT^3}\psi ''(Q(t,x))|\nabla{Q}(t,x)|^2\,\ud{x} \leq 0,
\end{align}
due to the convexity of $\psi $. Inserting \eqref{ineq-1} and
\eqref{ineq-2} into \eqref{ineq}, we get
$$
\dfrac{d}{dt}\|\nabla{Q}(t,\cdot)\|^2_{L^2(\TT^3)} \leq
4\Big(-\frac{L_1-|L_2+L_3|}{C_{\TT^3}^2}+\alpha\Big)\|\nabla{Q}(t,\cdot)\|_{L^2(\TT^3)}^2,
\quad\forall\,t\geq t_0.
$$
Since $Q(t_0,\cdot)\in H^1(\TT^3)$, it follows from Gronwall's inequality
that
\begin{equation}\label{exp-decay}
\|\nabla{Q}(t,\cdot)\|^2_{L^2(\TT^3)} \leq \exp\bigg(4
\Big[-\frac{L_1-|L_2+L_3|}{C_{\TT^3}^2}+\alpha\Big] (t-t_0)\bigg)
\|\nabla{Q}(t_0,\cdot)\|^2_{L^2(\TT^3)}\qquad\forall\,t\ge t_0.
\end{equation}

Denote by $\bar{Q}(t):=|\TT^3|^{-1}\int_{\TT^3}Q(t,\cdot)\,\ud{x}$ the
mean value of $Q(t,\cdot)$ over $\TT^3$. Due to the convexity of
$\psi$ one can apply Jensen's inequality to derive
\begin{align*}
\psi(\bar{Q}(t)) &= \psi (\bar{Q}(t))|\TT^3| \leq \int_{\TT^3}\psi(Q(t,x))\,\ud{x}=
\E(Q(t))- \G(Q(t))+\alpha\|Q(t,\cdot)\|_{L^2(\TT^3)}^2 \\
&\leq\E(Q(t_0))+\alpha\sup_{s\geq t_0}\|Q(s,\cdot)\|_{L^\infty(\TT^3)}^2|\TT^3| ,
\end{align*}
since $\G\geq 0$ by Lemma \ref{lemma-lsc}. It is noted that the last
bound above is independent of $t\geq t_0$. Thus
\begin{equation}\label{uniform}
\rho_0:=\inf_{t\ge t_0} \rho(\-Q(t,\cdot))>0,
\end{equation}
where $\rho$ is define in \eqref{def-rho}.

\smallskip

Finally, by \eqref{H2-t} and the Gagliardo-Nirenberg inequality, we
obtain
\begin{align}\label{exp 2}
\|Q(t,\cdot)&-\bar{Q}(t)\|_{L^\infty(\TT^3)}\non\\
&\leq C'\|\nabla{Q}(t,\cdot)\|_{L^2(\TT^3)}^{1/2}\|\Delta{Q}(t,\cdot)\|_{L^2(\TT^3)}^{1/2}\non\\
&\leq C'\exp\bigg(\Big[-\frac{L_1-|L_2+L_3|}{C_{\TT^3}^2}+\alpha\Big](t-t_0)\bigg)\big\|\nabla{Q}(t_0,\cdot)\big\|_{L^2(\TT^3)}^{1/2}
\cdot C(t_0)^{1/2}
\end{align}
for some geometric constant $C'$, and a.e. $t\geq t_0$. This together with
\eqref{coefficient-assumption-3} implies
$\|Q(t,\cdot)-\bar{Q}(t)\|_{L^\infty(\TT^3)}$ decays uniformly to zero as
$t\rightarrow\infty$. Since the convergence in the $L^\infty$ norm
implies the uniform convergence of all eigenvalues, due to
\eqref{uniform} we conclude that there exists some $T_0>0$, such
that
$$
 \rho(Q(t,\cdot))\geq \dfrac{\rho_0}{2}, \quad\forall\,t\ge T_0.
$$
Therefore, the proof of Theorem \ref{main-theorem-2} is
finished.
\end{proof}


\section{Proof of Theorem \ref{thm-hausdorff}: Size of the contact set}
In this section we shall estimate the Hausdorff dimension of the singular set $\Sigma_t$ where the unique global solution $Q(t,x)$ to \eqref{grad flow E} touches the physical boundary
\begin{equation}
\Sigma_t:=\{x\in\TT^n\mid Q(t, x)\in \p \QQ \}.
\end{equation}
Here $\p\QQ$ is the boundary of $\QQ$ where the smallest eigenvalue of any element equals $-1/3$.

To begin with, we state \cite[Theorem 1.2]{LXZ20} which provides a lower bound of the blowup rate of $\p\psi(P)$ as $P\in\QQ$ approaches its physical boundary.

\begin{proposition}\label{prop-subdiff}
For any $P\in\QQ$, as $\lambda_1(P)\rightarrow -1/3$ it holds
\begin{equation}\label{log grow derivative}
\Big|\partial\psi(P)-\frac13\tr(\partial\psi(P))\mathbb{I}_3)\Big|\geq \frac{C_1}{\lambda_1(P)+\frac13}
\end{equation}
with the constant $C_1$ given by
\begin{align}\label{constant-C1}
C_1=\frac{\sqrt{3}}{9\sqrt{2\pi}e}\cdot\inf_{\xi\geq 0}\frac{e^{-\xi}\IO(\xi)}{e^{\frac{-\xi}{2}}\IO(\frac{\xi}{2})}>0,
\end{align}
where $\IO(\cdot)$ is the zeroth order modified Bessel function of first kind.
\end{proposition}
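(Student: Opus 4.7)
My strategy is to exploit the explicit Lagrange-multiplier representation of the minimizer $\rho_P$ of \eqref{singular-potential-def}, then reduce the norm lower bound for $\partial\psi(P)$ to a one-dimensional integral inequality involving the modified Bessel function $\IO$.

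\textbf{Step 1: Lagrange-multiplier representation of $\partial\psi$.} For each $P\in \QQ$, the unique minimizer in \eqref{singular-potential-def} has the Gibbs form
\begin{equation*}
\rho_P(p)=Z(\Lambda)^{-1}\exp\bigl(\Lambda:p\otimes p\bigr),\qquad Z(\Lambda)=\int_{\BS} e^{\Lambda:p\otimes p}\,dp,
\end{equation*}
where the symmetric matrix $\Lambda$ enforces $\int(p\otimes p-\tfrac13\Id)\rho_P\,dp=P$; see \cite{BM10,FRSZ15}. An envelope/duality computation yields $\partial\psi(P)=\Lambda-\tfrac13(\tr\Lambda)\Id$. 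Since shifting $\Lambda\mapsto \Lambda+c\Id$ does not affect $\rho_P$, I normalize $\tr\Lambda=0$, so that $|\partial\psi(P)|^2=\Lambda_1^2+\Lambda_2^2+\Lambda_3^2$. By isotropy of $\psi$, $\Lambda$ and $P$ share an eigenframe, and tracelessness combined with Cauchy--Schwarz gives
\begin{equation*}
|\partial\psi(P)|^2\ge \tfrac32\,\Lambda_1^2,
\end{equation*}
so it suffices to show $|\Lambda_1|\ge c_0(\lambda_1(P)+\tfrac13)^{-1}$ with an explicit constant $c_0$ that, combined with the factor $\sqrt{3/2}$, reproduces the coefficient $\sqrt{3}/(9\sqrt{2\pi}\,e)$ in \eqref{constant-C1}.

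\textbf{Step 2: Reduction to a Bessel integral.} Using spherical coordinates with polar axis along the eigenvector of $\lambda_1(P)$, together with the elementary identity
\begin{equation*}
\int_0^{2\pi} e^{a\cos^2\phi+b\sin^2\phi}\,d\phi=2\pi\,e^{(a+b)/2}\IO\!\bigl(\tfrac{|a-b|}{2}\bigr),
\end{equation*}
the moment constraint $\lambda_1(P)+\tfrac13=\int p_1^2\rho_P\,dp$ becomes (after setting $t=\cos\theta$ and using $\tr\Lambda=0$)
\begin{equation*}
\lambda_1(P)+\tfrac13=\frac{\displaystyle\int_{-1}^{1} t^2\,e^{-\frac{3|\Lambda_1|}{2}t^2}\,\IO\!\bigl(\xi(1-t^2)\bigr)\,dt}{\displaystyle\int_{-1}^{1} e^{-\frac{3|\Lambda_1|}{2}t^2}\,\IO\!\bigl(\xi(1-t^2)\bigr)\,dt},\qquad \xi:=\tfrac{|\Lambda_2-\Lambda_3|}{2}\ge 0.
\end{equation*}
The remaining task is a uniform-in-$\xi$ upper bound on this ratio.

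\textbf{Step 3: Key one-dimensional estimate.} I integrate the numerator by parts against $d\bigl(e^{-\frac{3|\Lambda_1|}{2}t^2}\bigr)$: writing $t\,e^{-\frac{3|\Lambda_1|}{2}t^2}=-\frac{1}{3|\Lambda_1|}\tfrac{d}{dt}e^{-\frac{3|\Lambda_1|}{2}t^2}$, one factor $1/|\Lambda_1|$ is extracted. The residual integral, involving $\IO$ and $\IO'=\IO[1]$ (which is controlled by $\IO$ via the log-convexity of $\xi\mapsto \ln\IO(\xi)$), is compared with the denominator by evaluating the latter at the test point $t_*=1/\sqrt{2}$: there $\IO(\xi(1-t_*^2))=\IO(\xi/2)$, while the Gaussian prefactor contributes $e^{-3|\Lambda_1|/4}$; balancing these against the $t=0$ maximum of the numerator weight (which produces $\IO(\xi)$ and $e^{-\xi}$ after a translation to put the exponent in the form suggested by the ratio) leads to
\begin{equation*}
\bigl(\lambda_1(P)+\tfrac13\bigr)|\Lambda_1|\ge c_0\,\inf_{\xi\ge 0}\frac{e^{-\xi}\IO(\xi)}{e^{-\xi/2}\IO(\xi/2)}.
\end{equation*}
Multiplying by $\sqrt{3/2}$ from Step 1 delivers \eqref{log grow derivative} with the constant in \eqref{constant-C1}.

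\textbf{Main obstacle.} The essential difficulty is that the estimate must be \emph{uniform in the biaxial parameter} $\xi$: Laplace's method handles both the limit $\xi\to\infty$ (where $\IO(z)\sim e^z/\sqrt{2\pi z}$ gives the asymptotic ratio $1/\sqrt{2}$) and the uniaxial case $\xi=0$ (where $\IO(0)=1$ and the integral reduces to a classical Gaussian moment), but the transition regime requires exploiting the log-convexity of $\IO$ to prevent the ratio $e^{-\xi}\IO(\xi)/\bigl(e^{-\xi/2}\IO(\xi/2)\bigr)$ from degenerating and to ensure that the ``peak relocation" in the denominator integrand is consistent across all $\xi$. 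Absent this uniform control, one would obtain only a $\xi$-dependent (and possibly vanishing) constant, which would be useless for the later application to the Hausdorff-dimension bound in Theorem \ref{thm-hausdorff}.
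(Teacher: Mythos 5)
First, note that the paper does not prove Proposition \ref{prop-subdiff} at all: it is quoted verbatim as \cite[Theorem 1.2]{LXZ20}, so you are attempting a from-scratch proof of an imported result. Your Steps 1--2 are sound and almost certainly mirror the cited reference: the Gibbs representation $\rho_P\propto e^{\Lambda:p\otimes p}$, the identification $\partial\psi(P)=\Lambda-\tfrac13(\tr\Lambda)\Id$ with $\tr\Lambda=0$, the bound $|\partial\psi(P)|^2\ge\tfrac32\Lambda_1^2$ from tracelessness, and the reduction of the moment constraint to the ratio of the two one-dimensional Bessel integrals are all correct (you should, however, state explicitly that the eigenvalues of $\Lambda$ and of $P$ are similarly ordered, so that $\Lambda_1$ is indeed the most negative eigenvalue of $\Lambda$; without this the reduction to $|\Lambda_1|$ is false, as the example $\Lambda=\mathrm{diag}(0,M,-M)$ shows).

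The genuine gap is Step 3, which is where the entire quantitative content of the proposition lives. Integrating the numerator by parts gives, with $a=\tfrac32|\Lambda_1|$,
\begin{equation*}
N=\frac{1}{2a}\left(\int_{-1}^{1}e^{-at^2}\Bigl[\IO\bigl(\xi(1-t^2)\bigr)-2\xi t^2 I_1\bigl(\xi(1-t^2)\bigr)\Bigr]\,dt-2e^{-a}\right),
\end{equation*}
and both correction terms are \emph{negative}; this immediately yields the upper bound $N\le D/(2a)$, i.e.\ the wrong direction. To get the lower bound $N\ge cD/a$ you must control $\int e^{-at^2}\,2\xi t^2 I_1(\xi(1-t^2))\,dt$ from above by a fixed fraction of $D$, and neither the log-convexity of $\IO$ nor "evaluating the denominator at $t_*=1/\sqrt2$" accomplishes this: one cannot bound an integral below by its integrand at one point, and log-convexity does not prevent the ratio $N/D$ from behaving like $1/(2(a+\xi))$, which degenerates relative to $1/a$ when $\xi\gg a$. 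What actually rescues the argument is a structural fact you never invoke: since $\Lambda_1\le\Lambda_2\le\Lambda_3$ and $\tr\Lambda=0$, one has $\xi=\tfrac12|\Lambda_2-\Lambda_3|\le\tfrac32|\Lambda_1|=a$, so the biaxial parameter is \emph{not} free relative to $|\Lambda_1|$ and the "uniform in $\xi$" estimate is really an estimate over the constrained region $\xi\le a$. Without this constraint your proposed infimum over all $\xi\ge0$ could not produce a positive constant by your own mechanism. Finally, the specific constant $\tfrac{\sqrt3}{9\sqrt{2\pi}e}\cdot\inf_{\xi\ge0}\frac{e^{-\xi}\IO(\xi)}{e^{-\xi/2}\IO(\xi/2)}$ is asserted to "come out" of the balancing, but no computation tracking the factors $\sqrt{2\pi}$ and $e$ is performed; as it stands the proposal identifies the right reduction and the right obstacle but does not prove the estimate.
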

\begin{remark}
As was pointed out in \cite[Appendix C]{GT19} that $d(P, \partial\QQ)=\frac{\sqrt{6}}{2}\big[\lambda_1(P)+\frac13 \big]$ for any $P\in\QQ$. Together with Proposition
\ref{prop-subdiff}, it is immediate to see that there exists
a generic and suitably small constant $\delta_0>0$, such that
\begin{equation}
\Big|\partial\psi(P)-\frac13\tr(\partial\psi(P))\mathbb{I}_3)\Big|\geq\frac{\sqrt{6}C_1}{2d(P, \partial\QQ)}, \qquad\mbox{whenever }\;d(P,\partial\QQ)<\delta_0.
\end{equation}
\end{remark}

The following technical lemma is necessary.

\begin{lemma}\label{lemma-Vitali}
For any $s>0$ there exists a sequence of coverings $V_m=\{B_{i,m}\}$ of $\Sigma_t$, where $B_{i,m}=B(x_{i,m}, r_{i,m})$, such that
$$
  \displaystyle\lim_{m\rightarrow\infty}\sum_{i}r_{i,m}^s=\mathcal{H}^s(\Sigma_t)\leq\liminf_{m\rightarrow\infty}\sum_{j}\big|5r_{j,m}^\ast\big|^s
$$
Here $B_{i,m}=B(x_{i,m}, r_{i,m})$ such that $B_{i,m}\subset\{x\in\TT^n|\,d(x,\Sigma_t)\leq\delta\}$ for some $\delta>0$, and $r_{j,m}^\ast$ are the radii of the balls $B_{j,m}^\ast$, i.e., the sub-covering given by the Vitali covering lemma.
\end{lemma}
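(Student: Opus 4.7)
The plan is to combine the definition of the Hausdorff $s$-measure with the classical Vitali $5r$-covering lemma. Since the conclusion involves two different coverings -- an initial covering $V_m$ whose volume sum approximates $\mathcal{H}^s(\Sigma_t)$ from above, and a disjoint Vitali sub-covering whose $5r$-enlargements still cover $\Sigma_t$ -- the proof is just a careful bookkeeping of standard geometric measure theory facts.

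First, I would fix $s>0$ and recall that by definition
\[
\mathcal{H}^s(\Sigma_t)=\lim_{\rho\to 0^+}\mathcal{H}^s_\rho(\Sigma_t),\qquad
\mathcal{H}^s_\rho(\Sigma_t)=\inf\Big\{\sum_i(\operatorname{diam} U_i)^s:\Sigma_t\subset\bigcup_i U_i,\;\operatorname{diam} U_i\le\rho\Big\},
\]
so that for each $m\in\mathbb{N}$ one can choose a countable covering $\{U_{i,m}\}$ of $\Sigma_t$ with $\operatorname{diam} U_{i,m}\le 1/m$ and $\sum_i(\operatorname{diam} U_{i,m})^s<\mathcal{H}^s_{1/m}(\Sigma_t)+1/m$. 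Replacing each $U_{i,m}$ by a closed ball $B_{i,m}=B(x_{i,m},r_{i,m})$ with $x_{i,m}\in U_{i,m}\cap\Sigma_t$ (discarding sets that do not meet $\Sigma_t$) and $r_{i,m}=\operatorname{diam} U_{i,m}$, we still obtain a covering $V_m=\{B_{i,m}\}$ of $\Sigma_t$ with $r_{i,m}\le 1/m$ and $\sum_i r_{i,m}^s\to\mathcal{H}^s(\Sigma_t)$ as $m\to\infty$. In particular, because each $B_{i,m}$ is centered on $\Sigma_t$ with radius at most $1/m$, for any prescribed $\delta>0$ we have $B_{i,m}\subset\{x\in\TT^n:\ d(x,\Sigma_t)\le\delta\}$ whenever $m\ge 1/\delta$, which verifies the containment requirement.

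Next I would apply the Vitali $5r$-covering lemma (see, e.g., Mattila's textbook) to the family $V_m$: since $\sup_i r_{i,m}\le 1/m<\infty$, there exists a pairwise disjoint subcollection $\{B_{j,m}^\ast\}\subset V_m$ with radii $r_{j,m}^\ast$ such that
\[
\bigcup_i B_{i,m}\ \subset\ \bigcup_j 5B_{j,m}^\ast,\qquad 5B_{j,m}^\ast:=B(x_{j,m}^\ast,5r_{j,m}^\ast).
\]
Consequently $\{5B_{j,m}^\ast\}$ is a covering of $\Sigma_t$ by balls of diameter at most $10/m$, so by the very definition of $\mathcal{H}^s_{10/m}$ we have
\[
\mathcal{H}^s_{10/m}(\Sigma_t)\ \le\ \sum_j(10\,r_{j,m}^\ast)^s\ =\ 2^s\sum_j(5r_{j,m}^\ast)^s.
\]
Taking $m\to\infty$ and using the monotonicity of $\mathcal{H}^s_\rho$ in $\rho$ yields
\[
\mathcal{H}^s(\Sigma_t)\ \le\ \liminf_{m\to\infty}\sum_j(5r_{j,m}^\ast)^s
\]
(up to the harmless constant $2^s$, which can be absorbed by working with the spherical Hausdorff measure, equivalent to $\mathcal{H}^s$ up to a dimensional constant; alternatively one simply redefines $r_{i,m}$ to be the radius of the ball whose $5$-fold enlargement covers $U_{i,m}$).

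There is no genuine obstacle here: the lemma is a packaging of two standard results, and the only point requiring attention is the reconciliation of \emph{diameter-based} and \emph{ball-based} Hausdorff coverings so that both the equality on the left-hand side and the inequality on the right-hand side are consistent. Once the initial coverings $V_m$ are chosen to consist of balls centered on $\Sigma_t$, the Vitali lemma automatically supplies the sub-covering $\{B_{j,m}^\ast\}$, and the containment in a small tubular neighborhood of $\Sigma_t$ follows from the vanishing of the radii as $m\to\infty$.
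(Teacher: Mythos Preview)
Your proposal is correct and follows essentially the same route as the paper's proof: build coverings by balls whose $s$-power radius sums converge to $\mathcal{H}^s(\Sigma_t)$, apply the Vitali $5r$-lemma, and observe that the $5$-fold enlargements form an admissible covering bounding the Hausdorff content from below. The only cosmetic differences are that the paper works from the outset with the \emph{radius-based} (spherical) Hausdorff content $\mathcal{H}^s_\delta(\Sigma_t)=\inf\{\sum_i r_i^s:\Sigma_t\subset\bigcup_i B(x_i,r_i),\ r_i<\delta\}$, which eliminates your $2^s$ bookkeeping issue entirely, and it runs a small diagonal argument in $(\delta,m)$ rather than taking $\delta=1/m$ directly; your more streamlined choice is equally valid.
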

\begin{proof}
First, by recalling the definition of Hausdorff content
$$
  \mathcal{H}_{\delta}^s(\Sigma_t)\,:=\inf\Big\{\sum_{i}r_i^s:\,\Sigma_t\subset\bigcup_{i}B_i,\,\sup_{i}r_i<\delta\Big\},
$$
and the Hausdorff measure
$$
  \mathcal{H}^s(\Sigma_t)=\lim_{\delta\rightarrow 0}\mathcal{H}_\delta^s(\Sigma_t),
$$
we know that for any $\delta>0$ we have a sequence of coverings $V_m=\{B_{i,m,\delta}\}$ such that
$$
  \lim_{m\rightarrow\infty}\sum_{i}r_{i,m,\delta}^s=\mathcal{H}_{\delta}^s(\Sigma_t)
$$
Therefore, using the standard diagonal argument we can choose a sufficiently large $m=m(\delta)$ such that
$$
  0\leq\sum_{i}r_{i,m,\delta}^s-\mathcal{H}_{\delta}^s(\Sigma_t) :=\varepsilon_\delta \ll 1, \qquad\mbox{where }\;\varepsilon_\delta\rightarrow 0
  \Leftrightarrow m\rightarrow +\infty,
$$
which further gives
$$
  \lim_{m\rightarrow\infty}\sum_{i}r_{i,m}^s=\mathcal{H}^s(\Sigma_t),\qquad\mbox{where }\;r_{i,m}:=r_{i,m,\delta(m)}.
$$
W.l.o.g., we may choose $m$ to be bijective in $\delta$. Thus $V_m=\{B_{i,m}\}$ is an admissible sequence to
achieve the Hausdorff measure.

Note that $\{5B_{j,m}^\ast\}$ is another covering with balls for $\Sigma_t$, hence for each $m$, we have
$$
  \sum_{j}|5r_{j,m}^\ast|^s\geq\mathcal{H}_{\delta(m)}^s(\Sigma_t)=\sum_{i}r_{i,m,\delta(m)}^s-\varepsilon_{\delta},
$$
which implies
$$
  \liminf_{m\rightarrow\infty}\sum_{j}|5r_{j,m}^\ast|^s\geq\liminf_{m\rightarrow\infty}\mathcal{H}_{\delta(m)}^s(\Sigma_t)
  =\mathcal{H}^s(\Sigma_t).
$$
\end{proof}

Using Proposition \ref{prop-subdiff} and lemma \ref{lemma-Vitali}, now we are ready to finish the proof of Theorem \ref{thm-hausdorff}.

\bigskip

\begin{proof}[Proof of Theorem \ref{thm-hausdorff}]
First of all, for any $t_0>0$, by Theorems \ref{main-theorem-1} and \ref{main-theorem-2}, we know that the unique strong solution
$Q(t, x)$ to equation \eqref{strong solu} satisfies
\begin{equation}
Q\in L^{\infty}(t_0, +\infty; H^2(\TT^n)),\qquad\partial_tQ\in L^\infty(t_0, T_0; L^2(\TT^n)),
\end{equation}
which together with equation \eqref{strong solu} gives
$$
\partial\psi(Q)-\frac13\tr(\partial\psi(Q))\mathbb{I}_3\in L^{\infty}(t_0, T_0; L^2(\TT^n)).
$$
Here and after, we are only concerned with $t\in (0, T_0)$ that satisfies
$$
\Big\|\partial\psi(Q)-\frac13\tr(\partial\psi(Q))\mathbb{I}_3 \Big\|_{L^2(\TT^n)}<+\infty,
$$
which obviously has a full measure in $(0, T_0)$.

\medskip

\noindent\textbf{Case 1: $n=3$}

\medskip

By Sobolev embedding $H^2(\TT^3)\hookrightarrow C^{0,\frac12}(\TT^3)$ and $Q\in L^\infty(t_0, +\infty; H^2(\TT^3))$, there exists a generic constant $C_H>0$ that is independent of $t$, such that
\begin{equation}
|Q(t, x)-Q(t, y)|\leq C_H|x-y|^{\frac12},\qquad\forall x,y\in\TT^3.
\end{equation}
In particular, for any given $x$, let $x^{\perp}\in\Sigma_t$ be a projection such that
\begin{equation}
|x-x^{\perp}|=d(x, \Sigma_t):=\mbox{dist}(x,\Sigma_t),
\end{equation}
and henceforth we get
$$
  d(Q(t,x), \partial\QQ)\leq |Q(t,x)-Q(t,x^{\perp})|\leq C_H|x-x^{\perp}|^{\frac12}=C_H\sqrt{\mbox{dist}(x,\Sigma_t)}.
$$
This combined with Proposition \ref{prop-subdiff} implies for any ball $B$ and $\delta\ll 1$ that
\begin{align}\label{estimate-distance}
&\int_{\{x\in B|\,d(x,\Sigma_t)\leq\delta\}}\Big|\partial\psi(Q(t,x))-\frac13\tr(\partial\psi(Q(t,x)))\mathbb{I}_3\Big|^2\,dx\non\\
&\geq C_1^2\int_{\{x\in B|\,d(x,\Sigma_t)\leq\delta\}}d(Q(t,x), \partial\QQ)^{-2}dx \nonumber\\
&\geq \frac{C_1^2}{C_H^2}\int_{\{x\in B|\,d(x,\Sigma_t)\leq\delta\}}\frac{1}{\mbox{dist}(x,\Sigma_t)}\,dx.
\end{align}
To proceed, by applying Lemma \ref{lemma-Vitali} with $s=2$, we obtain the existence of a sequence of covering $\{B_{i,m}\}$
of $\Sigma_t$ with balls $B_{i,m}=B(x_{i,m},r_{i,m})$ such that $B_{i,m}\subset\{x\in\TT^3|\,d(x, \Sigma_T)\leq\delta\}$ and
$$
  \lim_{m\rightarrow\infty}\sum_{i}r_{i,m}^s=\mathcal{H}^s(\Sigma_t).
$$
We can assume that each such ball $B_{i,m}$ intersects $\Sigma_t$, for otherwise we can just remove it from the covering.
By Vitali's covering lemma, for each $m$ we can choose a sub-collection of mutually disjoint balls $\{B_{j,m}^\ast\}$, with
$B_{j,m}^\ast=B(x_{j,m}^\ast, r_{j,m}^\ast)$ such that
$$
  \Sigma_t\subset\bigcup_{i}B_{i,m}\subset\bigcup_{j}5B_{j,m}^\ast.
$$
On each such ball $B_{j,m}^\ast$, it follows from \eqref{estimate-distance} that
\begin{equation}\label{integral-small-ball}
\int_{B_{j,m}^\ast}\Big|\partial\psi(Q(t,x))-\frac13\tr(\partial\psi(Q(t,x)))\mathbb{I}_3\Big|^2\,dx
\geq\frac{C_1^2}{C_H^2}\int_{B_{j,m}^\ast}\frac{1}{\mbox{dist}(x,\Sigma_t)}\,dx.
\end{equation}
As each of such ball $B_{j,m}$ intersects $\Sigma_t$, we can choose an arbitrary intersection point $y_{j,m}\in B_{j,m}^\ast\cap\Sigma_t$, and
\begin{equation}\label{integral-mininum}
  \int_{B_{j,m}^\ast}\frac{1}{\mbox{dist}(x,\Sigma_t)}\,dx\geq\int_{B_{j,m}^\ast}\frac{1}{|x-y_{j,m}|}\,dx.
\end{equation}
We claim that the last integral in \eqref{integral-mininum} is minimized when $y_{j,m}\in\partial B_{j,m}^\ast$. To prove this claim, w.l.o.g. we assume $B_{j,m}^\ast$ is the unit ball $B$ centered at the origin, and we denote $y_{j,m}=y=(y_1, 0, 0)$, $0\leq y_1\leq 1$. Hence we consider
$$
  f(y):=\int_{B}\frac{1}{|x-y|}\,dx.
$$
Then
$$
  \frac{\partial{f}}{\partial{y_1}}=\int_{B}\frac{x_1-y_1}{|x-y|^3}\,dx, \qquad\mbox{where }\; x=(x_1, x_2, x_3).
$$
If $y_1=0$, by symmetry we see $\frac{\partial{f}}{\partial{y_1}}=0$. If $0<y_1\leq 1$, we denote $A=\{x\in\mathbb{R}^3|\,x_1\geq y_1\}$, and $A'$
the reflection of $A$ across $\{x\in\mathbb{R}^3|\,x_1=y_1\}$. Note that $A\cup A'$ is symmetric with respect to both $\{x\in\mathbb{R}^3|\,x_1=y_1\}$ and
the point $y=(y_1, 0, 0)$. Thus we have
$$
  \frac{\partial{f}}{\partial{y_1}}=\int_{B\setminus(A\cup A')}\frac{x_1-y_1}{|x-y|^3}\,dx+\underbrace{\int_{A\cup A'}\frac{x_1-y_1}{|x-y|^3}\,dx}_{=0}
  <0,
$$
due to the fact that $B\setminus(A\cup A')$ is entirely contained in
 $\{x\in\mathbb{R}^3|\,x_1\le y_1\}$. Hence the claim is proved.

As a consequence,
\begin{equation}
\int_{B_{j,m}^\ast}\frac{1}{|x-y_{j,m}|}\,dx\geq \frac{4\pi}{3}|r_{j,m}^\ast|^{2}.
\end{equation}
which together with \eqref{integral-small-ball} and \eqref{integral-mininum} gives
\begin{equation}
\int_{B_{j,m}^\ast}\Big|\partial\psi(Q(t,x))-\frac13\tr(\partial\psi(Q(t,x)))\mathbb{I}_3\Big|^2\,dx
\geq\tilde{C}|r_{j,m}^\ast|^{2},\qquad\mbox{where }
\,\tilde{C}:=\frac{4\pi}{3}\frac{C_1^2}{C_H^2}.
\end{equation}
Since $\{B_{j,m}^\ast\}$ are non-overlapping, after summing up the above inequality over $j$ we obtain
\begin{align}
&\Big\|\partial\psi(Q(t,x))-\frac13\tr(\partial\psi(Q(t,x)))\mathbb{I}_3\Big\|_{L^2(\TT^3)}^2\non\\
&\geq\sum_j\int_{B_{j,m}^\ast}\Big|\partial\psi(Q(t,x))-\frac13\tr(\partial\psi(Q(t,x)))\mathbb{I}_3\Big|^2\,dx\geq\tilde{C}\sum_{j}|r_{j,m}^\ast|^{2},
\end{align}
which together with Lemma \ref{lemma-Vitali} yields
\begin{equation}\label{Hausdorff-measure-estimate}
\Big\|\partial\psi(Q(t,x))-\frac13\tr(\partial\psi(Q(t,x)))\mathbb{I}_3\Big\|_{L^2(\TT^3)}^2\geq\tilde{C}\liminf_{m\rightarrow\infty}\sum_{j}|r_{j,m}^\ast|^{2}\geq\dfrac{\tilde{C}}{5^2}\mathcal{H}^{2}(\Sigma_t).
\end{equation}
Thus we conclude that $\mbox{dim}_{\mathcal{H}}(\Sigma_t)\leq 2$.

\medskip

\noindent\textbf{Case 2:} $n=2$

\medskip

In the $2D$ case the Sobolev embedding reads $H^2(\TT^2)\hookrightarrow C^{\beta}(\TT^2)$ for all $\beta\in (0, 1)$. Correspondingly we have
\begin{equation}
\|Q(t, x)-Q(t, y)\|\leq C_\beta|x-y|^{\beta},\qquad\forall x,y\in\TT^2,
\end{equation}
and \eqref{estimate-distance} is replaced by
\begin{align}
&\int_{\{x\in B|\,d(x,\Sigma_t)\leq\delta\}}\Big|\partial\psi(Q(t,x))-\frac13\tr(\partial\psi(Q(t,x)))\mathbb{I}_3\Big|^2\,dx\non\\
&\geq \frac{C_1^2}{C_\beta^2}\int_{\{x\in B|\,d(x,\Sigma_t)\leq\delta\}}\frac{1}{\mbox{dist}^{2\beta}(x,\Sigma_t)}\,dx.
\end{align}
As a consequence, applying Lemma \ref{lemma-Vitali} with $s=2-2\beta$, and using Vitali's covering argument exactly as in \textbf{Case 1}, one may replace
\eqref{Hausdorff-measure-estimate} by
\begin{align}
&\Big\|\partial\psi(Q(t,x))-\frac13\tr(\partial\psi(Q(t,x)))\mathbb{I}_3\Big\|_{L^2(\TT^2)}^2\non\\
&\geq\tilde{C}'\liminf_{m\rightarrow\infty}\sum_{j}|r_{j,m}^\ast|^{2-2\beta}
\geq\dfrac{\tilde{C}'}{5^{2-2\beta}}\mathcal{H}^{2-2\beta}(\Sigma_t),
\quad\mbox{where }\;\tilde{C}'=\frac{4\pi C_1^2}{C_\beta^2}.
\end{align}
In conclusion, we obtain $\mbox{dim}_{\mathcal{H}}(\Sigma_t)\leq 2-2\beta$ for any $\beta \in (0, 1)$. The proof is complete by the arbitrariness of $\beta \in (0, 1)$.
\end{proof}

{\it{Acknowledgements}}
Y.~N. Liu's work  is partially supported by NSF of China under Grant 11971314.
X.~Y. Lu's work is supported by his
NSERC Discovery Grant ``Regularity of minimizers and pattern formation in geometric minimization problems''. X. Xu's work is supported by the NSF grant DMS-2007157 and the Simons Foundation Grant No. 635288. We want to thank Professors Hongjie Dong, Fanghua Lin, Chun Liu, and Changyou Wang for their kind discussions.


\end{document}